\documentclass[a4paper,12pt,leqno]{amsart}
\usepackage{latexsym}
\usepackage[all]{xy}

\usepackage{amsmath, amsfonts, amssymb,amscd,graphics,graphicx,color,eucal,setspace,xypic} 
\usepackage{comment}
\usepackage{mathtools}

\usepackage{graphicx}
\usepackage{stmaryrd}
\usepackage{bigdelim, multirow}
\usepackage[all]{xy}

\usepackage{mathdots}

\definecolor{gray}{gray}{0.7}
\definecolor{Gray}{gray}{0.3}

\textwidth=16cm
\textheight=23cm
\topmargin=0.5cm
\oddsidemargin=0.0cm
\evensidemargin=0.0cm

\usepackage{amscd}

\numberwithin{equation}{section}

\theoremstyle{break}
 \newtheorem{theorem}{Theorem}[section]
 \newtheorem{proposition}[theorem]{Proposition}
 \newtheorem{corollary}[theorem]{Corollary}
 \newtheorem{lemma}[theorem]{Lemma}

 \theoremstyle{definition}
 \newtheorem{definition}[theorem]{Definition}
 \newtheorem{remark}[theorem]{Remark}
 \newtheorem{example}[theorem]{Example}
 \newtheorem{question}[theorem]{Question}

\allowdisplaybreaks[4]

\def\C{\mathbb C}

\def\Q{\mathbb Q}
\def\Z{\mathbb Z}

\def\t{\mathfrak{t}}
\def\b{\mathfrak{b}}
\def\g{\mathfrak{g}}
\def\rank{n}
\def\m{r}
\def\Fl{Fl}
\def\perm{\mathfrak{S}}
\def\a{a}
\def\bb{b}

\def\RR{\mathcal R}
\def\L{\mathcal L}

\def\q{q}

\DeclareMathOperator{\diag}{diag}

\DeclareMathOperator{\pt}{pt}
\DeclareMathOperator{\Pet}{Pet}
\DeclareMathOperator{\Ad}{Ad}
\DeclareMathOperator{\Poin}{Poin}
\DeclareMathOperator{\Hom}{Hom}
\DeclareMathOperator{\Sym}{Sym}


\begin{document}

\title[Cohomology of intersection of Peterson variety and Richardson variety]{Toward cohomology rings of intersections of Peterson varieties and Richardson varieties}

\author[T. Horiguchi]{Tatsuya Horiguchi}
\address{National Institute of Technology, Ube College, 2-14-1, Tokiwadai, Ube, Yamaguchi, 755-8555, Japan}
\email{tatsuya.horiguchi0103@gmail.com}


\subjclass[2020]{14M15, 55N91}
\keywords{flag varieties, Peterson varieties, Richardson varieties, equivariant cohomology}

\abstract 
Peterson varieties are subvarieties of flag varieties and their (equivariant) cohomology rings are given by Fukukawa--Harada--Masuda in type $A$ and soon later the author with Harada and Masuda gives an explicit presentation of the (equivariant) cohomology rings of Peterson varieties for arbitrary Lie types. 
In this note we study the (equivariant) cohomology ring of the intersections of Peterson variety with Schubert, opposite Schubert, and Richardson varieties in more general. 
By the work of Goldin--Mihalcea--Singh, the intersections of Peterson variety with Schubert varieties are naturally identified with smaller Peterson varieties, so the problem reduces to the problem for opposite Schubert intersections.
In this note we provide a technical statement for (equivariant) cohomology ring of a subvariety with some conditions of Peterson variety. 
By using the statement, we calculate the (equivariant) cohomology rings for some intersections of Peterson varieties with opposite Schubert varieties in type $A$. 
We also explicitly present the (equivariant) cohomology rings for some intersections of Peterson varieties with Richardson varieties in type $A$.

\endabstract

\maketitle

\tableofcontents

\section{Introduction}\label{intro}

The main result of this paper gives an explicit presentation of the (equivariant)
cohomology rings of the intersections between Peterson varieties and some Richardson varieties in terms of generators and relations. 
We briefly explain the background of our results.
Let $G$ be a semisimple linear algebraic group over $\C$ of rank $\rank$.
Fix a Borel subgroup $B$ of $G$ and a maximal torus $T$ in $B$.
We denote by $\Phi$ the associated root system. 
Peterson variety, denoted by $\Pet_\Phi$, is a subvariety of the flag variety $G/B$ which is appeared in the study of the quantum cohomology of flag varieties (\cite{Kos, Rie}).
It is known that $\Pet_\Phi$ is singular in general (\cite{IY, Kos}).
An explicit presentation of the cohomology ring\footnote{In this manuscript, the cohomology means the singular cohomology with rational coefficients.} of $\Pet_\Phi$ is given by \cite{FHM} when $\Phi$ is of type $A$. 
Soon later, \cite{HHM} gives an explicit presentation of the cohomology ring of $\Pet_\Phi$ for general Lie type, which is uniform across Lie types as follows. 
Take a set of simple roots $\Delta=\{\alpha_1,\ldots,\alpha_\rank \}$ of $\Phi$. 
Let $\{\varpi_1,\ldots,\varpi_\rank\}$ be the set of the fundamental weights. 
We denote by $\t^*_{\Q}$ the vector space over $\Q$ spanned by $\{\varpi_1,\ldots,\varpi_\rank \}$ (or equivalently spanned by $\{\alpha_1,\ldots,\alpha_\rank \}$). 
Then, the cohomology ring $H^*(\Pet_\Phi)$ is isomorphic to the quotient of the symmetric algebra $\Sym \t^*_\Q$ of the ideal generated by the quadratic forms $\alpha_i \varpi_i$ for $1 \leq i \leq \rank$: 
\begin{align} \label{eq:cohomologyPetIntro}
H^*(\Pet_\Phi) \cong \Sym \t^*_\Q/(\alpha_i \varpi_i \mid 1 \leq i \leq \rank).
\end{align}
Let $w_K$ denotes the longest element of the parabolic subgroup associated with $K \subset \Delta$. 
Consider the Schubert variety $X_{w_I}=\overline{Bw_IB/B}$ and the opposite Schubert variety $\Omega_{w_J}=\overline{B^{-}w_JB/B}$ for $I, J \subset \Delta$ where $B^-$ is the opposite Borel subgroup. 
We define $X_I$ and $\Omega_J$ by 
\begin{align*}
X_I \coloneqq X_{w_I} \cap \Pet_\Phi \ \textrm{and} \ \Omega_J \coloneqq \Omega_{w_J} \cap \Pet_\Phi.
\end{align*}
Note that $X_I=\Pet_\Phi$ whenever $I=\Delta$ and $\Omega_J=\Pet_\Phi$ whenever $J=\emptyset$ by the definition.
The homology classes of $\{X_I \}_{I \subset \Delta}$ forms a basis of the integral homology of $\Pet_\Phi$ (\cite{GMS, Pre13}). 
On the other hand, \cite{AHKZ} constructs an additive basis for the integral cohomology of $\Pet_\Phi$ which reflects $\{\Omega_J \}_{J \subset \Delta}$ in type $A$. 
Note that the cohomology basis coincides with a basis in \cite{HaTy11} which is the image of certain Schubert classes under the restriction map $H^*(G/B) \to H^*(\Pet_\Phi)$. 
To study the structure constants for this basis is called Peterson Schubert calculus (\cite{AHKZ, BaHa, Dre15, GoGo, GMS, HaTy11}).
In this paper, we study the cohomology rings for $X_I$ and $\Omega_J$, and more generally 
\begin{align*}
X_I^J \coloneqq X_I \cap \Omega_J \ \ \ \textrm{for} \ J \subset I \subset \Delta
\end{align*}
which is the intersection between the Peterson variety $\Pet_\Phi$ and the Richardson variety $X_{w_I} \cap \Omega_{w_J}$.
By \cite{GMS} Schubert intersections are identified with smaller Peterson varieties. 
This fact with \eqref{eq:cohomologyPetIntro} yields that one can calculate the cohomology ring for $X_I$.
More concretely, it is described as  
\begin{align} \label{eq:cohomologyXIIntro}
H^*(X_I) \cong \Sym \t^*_\Q/(\alpha_k \varpi_k \mid k \in [\rank] \ \textrm{with} \ \alpha_k \in I) +(\varpi_i \mid i \in [\rank] \ \textrm{with} \ \alpha_i \in \Delta \setminus I).
\end{align}
Hence, the problem of Richardson intersections in our setting reduces to the problem for opposite Schubert intersections.

The approach in \cite{HHM} to obtain \eqref{eq:cohomologyPetIntro} is localization technique.
In fact, \cite{HHM} gives an explicit presentation of the $S$-equivariant cohomology ring $H_S^*(\Pet_\Phi)$ where $S$ is a one-dimensional subtorus of the maximal torus $T$ which preserves $\Pet_\Phi$ introduced by Harada and Tymoczko (\cite{HaTy11, HaTy17}).
One of the key points in the proof of \eqref{eq:cohomologyPetIntro} is that we need the following two formulae of the $S$-fixed point set $(\Pet_\Phi)^S$ and the Poincar\'e polynomial $\Poin(\Pet_\Phi,\q)$ given by \cite{HaTy17, Pre13}.
\begin{enumerate}
\item 
The set of $S$-fixed points $(\Pet_\Phi)^S$ is given by 
\begin{align*}
(\Pet_\Phi)^S=\{w_K \in W \mid K \subset \Delta \}. 
\end{align*}
\item The Poincar\'e polynomial of $\Pet_\Phi$ is equal to 
\begin{align*}
\Poin(\Pet_\Phi,\q)=(1+q^2)^\rank.
\end{align*}
\end{enumerate}
From this point we provide the following technical statement.

\begin{theorem} \label{theorem:Intro1}
Let $I$ and $J$ be two subsets of the simple roots $\Delta=\{\alpha_1,\ldots,\alpha_\rank\}$ with $J \subset I$.
Let $Y_I^J$ be a subvariety of the Peterson variety $\Pet_\Phi$ satisfying the following two conditions.
\begin{enumerate}
\item The $S$-action on $\Pet_{\Phi}$ preserves $Y_I^J$ and the set of $S$-fixed points is 
\begin{align*}
(Y_I^J)^S=\{w_K \in W \mid K \subset \Delta, J \subset K \subset I \}.
\end{align*}
\item The Poincar\'e polynomial of $Y_I^J$ is equal to 
\begin{align*}
\Poin(Y_I^J,\q)=(1+q^2)^{|I|-|J|}.
\end{align*}
\end{enumerate}
Then, there is an isomorphism of graded $\Q$-algebras
\begin{align} \label{eq:cohomologyYIJIntro}
H^*(Y_I^J) \cong \Sym \t^*_\Q/\L_I^J,
\end{align}
where the ideal $\L_I^J$ is generated by the following three types of elements:
\begin{align*}
&\alpha_k \varpi_k \ \ \ \textrm{for} \ k \in [\rank] \ \textrm{with} \ \alpha_k \in I \setminus J; \\
&\alpha_j  \ \ \ \ \ \ \textrm{for} \ j \in [\rank] \ \textrm{with} \ \alpha_j \in J; \\
&\varpi_i \ \ \ \ \ \ \textrm{for} \ i \in [\rank] \ \textrm{with} \ \alpha_i \in \Delta \setminus I.
\end{align*}
Here, we use the notation $[n] \coloneqq \{1,2,\ldots,n\}$.
\end{theorem}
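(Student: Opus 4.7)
The plan is to emulate the strategy of Harada--Horiguchi--Masuda for $\Pet_\Phi$: establish the theorem in $S$-equivariant cohomology via fixed-point localization, then specialize the equivariant parameter $t \in H^2(BS)$ to zero.

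The first step is to establish $S$-equivariant formality for $Y_I^J$. The Poincaré polynomial $(1+q^2)^{|I|-|J|}$ is concentrated in even degrees, and its value at $q=1$ equals $2^{|I|-|J|} = |(Y_I^J)^S|$. Combined with condition (1) (finiteness of the fixed set), this yields equivariant formality, so that $H_S^*(Y_I^J)$ is a free $\Q[t]$-module, the restriction
\[
\iota^*\colon H_S^*(Y_I^J) \hookrightarrow H_S^*((Y_I^J)^S) = \bigoplus_{J \subset K \subset I} \Q[t]
\]
is injective, and specializing $t=0$ recovers $H^*(Y_I^J)$. Composing the Borel presentation of $H_S^*(G/B)$ with pullback along $Y_I^J \hookrightarrow G/B$ produces a canonical ring homomorphism $\pi\colon \Sym \t^*_\Q \to H^*(Y_I^J)$ and an equivariant lift $\Phi\colon \Sym \t^*_\Q \otimes \Q[t] \to H_S^*(Y_I^J)$.

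The heart of the argument is to verify $\L_I^J \subset \ker \pi$, generator by generator via $\iota^*$. The quadratic generators $\alpha_k \varpi_k$ for $\alpha_k \in I \setminus J$ already vanish in $H^*(\Pet_\Phi)$ by \eqref{eq:cohomologyPetIntro}, hence in $H^*(Y_I^J)$. For a linear generator $\alpha_j$ with $\alpha_j \in J$: every fixed point $w_K \in (Y_I^J)^S$ satisfies $J \subset K$, so $\alpha_j$ is a simple root of $W_K$, and $w_K$ sends it to the negative of another simple root of $W_K$; evaluating via the Harada--Tymoczko cocharacter then yields the same nonzero multiple of $t$ at every $w_K$. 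By $\iota^*$-injectivity this equality lifts to $H_S^*(Y_I^J)$, so specializing $t=0$ gives $\alpha_j = 0$ in $H^*(Y_I^J)$. For a linear generator $\varpi_i$ with $\alpha_i \in \Delta \setminus I$: since $\alpha_i \notin K$ for every $w_K$, each $w_K \in W_K$ fixes $\varpi_i$, so the $\iota^*$-restriction is again a constant multiple of $t$ at every fixed point, forcing $\varpi_i = 0$ in $H^*(Y_I^J)$.

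It then remains to upgrade the induced map $\bar\pi\colon \Sym \t^*_\Q / \L_I^J \to H^*(Y_I^J)$ to an isomorphism. Surjectivity follows from a Kirwan-type argument: the Borel generators of $H_S^*(G/B)$ surject onto each fixed-point component of $\iota^* H_S^*(Y_I^J)$, and equivariant formality transfers this to $H^*(Y_I^J)$. For injectivity I compare Hilbert series. The linear relations kill the $|J|$ variables $\{\alpha_j : \alpha_j \in J\}$ directly, and the relations $\varpi_i = 0$ for $\alpha_i \in \Delta \setminus I$ eliminate the remaining $|\Delta \setminus I|$ variables in terms of $\{\alpha_k : \alpha_k \in I \setminus J\}$, since the principal submatrix of the inverse Cartan matrix indexed by $\Delta \setminus I$ is invertible (the inverse Cartan matrix factors as a symmetric positive-definite matrix times a diagonal positive one, so every principal submatrix is invertible). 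The quotient becomes a polynomial ring on $|I|-|J|$ generators. The main technical hurdle is then to verify that, after this Cartan reduction, the $|I|-|J|$ rewritten quadratic relations $\alpha_k \varpi_k$ cut the Hilbert series down to exactly $(1+q^2)^{|I|-|J|}$, matching $\Poin(Y_I^J,\q)$; I expect this to follow by recognizing the reduced ring as a Peterson-type quotient associated with the subroot system spanned by $I \setminus J$, reducing the calculation to the HHM case.
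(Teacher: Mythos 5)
Your overall strategy (equivariant formality, computing the restrictions of the candidate relations at the fixed points $w_K$, then a Hilbert series comparison, then setting $t=0$) is the same as the paper's, and your fixed-point verifications of the three types of relations are correct --- indeed your direct computation of $c_1^S(L_{\varpi_i}^*)|_{w_K}=-\varpi_i^T$ for $\alpha_i\in\Delta\setminus I$ is a little more elementary than the paper's route through the classes $\tilde p_{s_i}$ and upper-triangularity of equivariant Schubert classes. However, there is a genuine gap at the surjectivity step, which is the load-bearing part of the argument. Knowing that $\Sym \t^*_\Q[t]$ surjects onto \emph{each individual} fixed-point component $\Q[t]$ of $\bigoplus_{J\subset K\subset I}\Q[t]$ says nothing about whether it surjects onto the subring $\iota^*H^*_S(Y_I^J)$ of that direct sum, and there is no general ``Kirwan-type'' theorem giving surjectivity of $H^*(G/B)\to H^*(Y)$ for a subvariety $Y$. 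Since your injectivity step is ``surjection between graded spaces of equal Hilbert series,'' the whole proof collapses without surjectivity. The paper obtains it by the Atiyah--Borel--Segal localization theorem: inverting $\mathcal T=\Q[t]\setminus\{0\}$ makes $H^*_S(\Pet_\Phi)\to H^*_S((\Pet_\Phi)^S)$ an isomorphism, the projection $H^*_S((\Pet_\Phi)^S)\to H^*_S((Y_I^J)^S)$ is visibly surjective, hence $\mathcal T^{-1}H^*_S(\Pet_\Phi)\to\mathcal T^{-1}H^*_S(Y_I^J)$ is surjective; a rank count then makes $\mathcal T^{-1}\psi_I^J$ an isomorphism, injectivity of $\psi_I^J$ follows because both sides inject into their localizations ($t$ is a non-zero-divisor on $\RR[t]/\tilde\L_I^J$ by the regular sequence property, and $H^*_S(Y_I^J)$ is $\Q[t]$-free), and bijectivity follows from the Hilbert series. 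You need some version of this mechanism.

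The second, smaller, gap is that your Hilbert series computation of $\Sym\t^*_\Q/\L_I^J$ is left as an unverified expectation. The elimination you propose does reduce to a polynomial ring on $|I|-|J|$ variables (your observation that principal submatrices of the inverse Cartan matrix are invertible is correct, since $C^{-1}=G^{-1}\Lambda^{-1}$ with $G^{-1}$ positive definite and $\Lambda^{-1}$ diagonal positive), but the rewritten quadrics are \emph{not} the Peterson relations of the root subsystem generated by $I\setminus J$: the linear forms $\varpi_k$ pick up coefficients from the full inverse Cartan matrix, not the small one, so ``reducing to the HHM case'' is not justified. The paper sidesteps elimination entirely: the $\rank$ generators of $\L_I^J$ form a regular sequence because their common zero locus is contained in that of $\{\alpha_k\varpi_k\mid k\in[\rank]\}$ augmented by the linear conditions (each linear generator forces the corresponding quadric to vanish), and each resulting linear system is invertible by the same principal-submatrix fact; Lemma~\ref{lemma:commutaive algebra} then gives the Hilbert series $(1+q^2)^{|I|-|J|}$ at once. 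Adopting that argument would close this gap cleanly.
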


We remark that the second condition~(2) for $Y_I^J$ in Theorem~\ref{theorem:Intro1} implies that the odd degree cohomology groups of $Y_I^J$ vanish. 
By using the localization technique, we prove Theorem~\ref{theorem:Intro1}.
More precisely, we give an explicit presentation of the $S$-equivariant cohomology ring of $Y_I^J$ (Theorem~\ref{theorem:psiIJiso}), which yields the isomorphism \eqref{eq:cohomologyYIJIntro}.

Note that $X_I$ satisfies the conditions~(1) and (2) in Theorem~\ref{theorem:Intro1} when $J=\emptyset$, which yields \eqref{eq:cohomologyXIIntro}.
On the other hand, we can compute the cohomology ring of the specail case of $\Omega_J$ in type $A$ by using Theorem~\ref{theorem:Intro1}.
One can easily check that the $S$-action on $\Pet_\Phi$ preserves $\Omega_J$ and  
\begin{align*}
(\Omega_J)^S=\{w_K \in W \mid K \subset \Delta, K \supset J \}
\end{align*}
for any $J \subset \Delta$ in type $A_n$. 
However, it is complicated to compute the Poincar\'e polynomial of $\Omega_J$ in general. 
If $J$ is of the form $\{\alpha_1,\alpha_2,\ldots,\alpha_m\}$ for some $m$ in type $A_n$, then we prove that $\Poin(\Omega_{\{\alpha_1,\alpha_2,\ldots,\alpha_m\}},\q)=(1+\q^2)^{n-m}$.
Therefore, $\Omega_{\{\alpha_1,\alpha_2,\ldots,\alpha_m\}}$ of type $A_n$ satisfies the conditions~(1) and (2) in Theorem~\ref{theorem:Intro1} when $I=\Delta$ and $J=\{\alpha_1,\alpha_2,\ldots,\alpha_m\}$, which derives an explicit presentation of the cohomology ring of $\Omega_{\{\alpha_1,\alpha_2,\ldots,\alpha_m\}}$. 

\begin{theorem} \label{theorem:Intro3}
Let $\Delta=\{\alpha_1,\ldots,\alpha_\rank\}$ be the set of the simple roots in type $A_n$. 
Suppose that $J=\{\alpha_1,\alpha_2,\ldots,\alpha_m\}$ for some $m$ with $1 \leq m \leq \rank$.
Let $\Omega_J$ be the intersection between the opposite Schubert variety $\Omega_{w_J}$ and the Peterson variety $\Pet_\Phi$ in type $A_n$.
Then, there is an isomorphism of graded $\Q$-algebras
\begin{align*}
H^*(\Omega_{\{\alpha_1,\alpha_2,\ldots,\alpha_m\}}) \cong \Sym \t^*_\Q/(\alpha_k \varpi_k \mid m+1 \leq k \leq \rank) +(\alpha_j \mid 1 \leq j \leq m).
\end{align*}
\end{theorem}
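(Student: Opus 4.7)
The plan is to apply Theorem~\ref{theorem:Intro1} with $I = \Delta$ and $J = \{\alpha_1, \ldots, \alpha_m\}$, taking $Y_I^J$ to be the intersection $\Omega_J = \Omega_{w_J} \cap \Pet_\Phi$. Once the two hypotheses of that theorem are verified in this setting, matching the resulting ideal $\L_I^J$ with the one in the conclusion is straightforward bookkeeping: since $\Delta \setminus I = \emptyset$, no $\varpi_i$ generators appear; since $I \setminus J = \{\alpha_{m+1}, \ldots, \alpha_n\}$, the generators of the form $\alpha_k \varpi_k$ run over $m+1 \le k \le n$; and the generators $\alpha_j$ run over $1 \le j \le m$. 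This is precisely the ideal in Theorem~\ref{theorem:Intro3}.

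Condition~(1) of Theorem~\ref{theorem:Intro1} reduces, after noting that $\Omega_{w_J}$ is $T$-stable (hence $\Omega_J$ is $S$-stable by restriction), to identifying the $S$-fixed locus. Since $(\Pet_\Phi)^S = \{w_K : K \subset \Delta\}$, one only needs to determine which $w_K$ lie in $\Omega_{w_J}$. Using the standard Bruhat-order criterion $w_K \in \Omega_{w_J} \iff w_J \le w_K$, together with the combinatorics of longest elements of parabolic subgroups in type $A_n$, one checks that this is equivalent to the subset inclusion $J \subset K$. The excerpt packages this as Lemma~\ref{lemma:condition(1)OmegaJ}, which yields $(\Omega_J)^S = \{w_K : J \subset K \subset \Delta\}$, matching condition~(1) with $I = \Delta$.

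The main obstacle is condition~(2), the Poincar\'e polynomial identity $\Poin(\Omega_J, q) = (1+q^2)^{n-m}$. Here the natural strategy is to exploit Precup's affine paving of $\Pet_\Phi$ in type $A_n$, whose cells $C_K$ are indexed by $K \subset \Delta$ with $\dim_\C C_K = |K|$ and $w_K \in C_K$, and to intersect each cell with the opposite Schubert variety $\Omega_{w_J}$. The fixed-point computation already forces $C_K \cap \Omega_{w_J}$ to be empty whenever $J \not\subset K$, and the identity $\sum_{J \subset K \subset \Delta} q^{2(|K|-m)} = (1+q^2)^{n-m}$ gives the target polynomial provided each surviving intersection is an affine cell of complex dimension $|K| - m$. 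Establishing this dimension drop in coordinates, using explicit matrix representatives for the Peterson cells together with the defining conditions of $\Omega_{w_J}$, is where the type-$A_n$ hypothesis and the special shape $J = \{\alpha_1, \ldots, \alpha_m\}$ enter crucially; this is the content of Proposition~\ref{proposition:condition(2)OmegaJ}, and is the one place where the proof is genuinely delicate rather than formal.
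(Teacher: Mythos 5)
Your proposal is correct and follows essentially the same route as the paper: verify conditions (1) and (2) of Theorem~\ref{theorem:Intro1} for $I=\Delta$, $J=\{\alpha_1,\ldots,\alpha_m\}$, with condition (1) via the Bruhat-order criterion (Lemma~\ref{lemma:condition(1)OmegaJ}) and condition (2) via the cell decomposition $\Omega_J=\coprod_{K\supset J}(\Omega_J\cap X_K^\circ)$ together with the coordinate computation showing $\Omega_J\cap X_K^\circ\cong\C^{|K|-m}$ (the paper's Lemma~\ref{lemma:Omegab}, which uses the vanishing of leading principal minors of the Peterson cell matrices). You correctly identify the dimension-drop computation as the only genuinely delicate step, and your outline of it matches the paper's argument.
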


In more general, we have the following result for the special Richardson intersections in type $A$ by using Theorem~\ref{theorem:Intro1}.

\begin{theorem} \label{theorem:Intro4}
Let $\Delta=\{\alpha_1,\ldots,\alpha_\rank\}$ be the set of the simple roots in type $A_n$. 
Suppose that $I=\{\alpha_r,\alpha_{r+1},\ldots,\alpha_n\}$ and $J=\{\alpha_r,\alpha_{r+1},\ldots,\alpha_m\}$ for some $r$ and $m$ with $1 \leq r \leq m \leq \rank$.
Let $X_I^J$ be the intersection between the Richardson variety $X_{w_I} \cap \Omega_{w_J}$ and the Peterson variety $\Pet_\Phi$ in type $A_n$.
Then, there is an isomorphism of graded $\Q$-algebras
\begin{align*}
H^*(X_I^J) \cong \Sym \t^*_\Q/(\alpha_k \varpi_k \mid m+1 \leq k \leq n) +(\alpha_j \mid r \leq j \leq m)+(\varpi_i \mid 1 \leq i \leq r-1).
\end{align*}
\end{theorem}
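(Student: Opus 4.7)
The strategy is to exhibit $X_I^J$ as an instance of the abstract variety $Y_I^J$ in Theorem~\ref{theorem:Intro1}, so that the desired presentation of $H^*(X_I^J)$ drops out of the general result. Concretely, with $I=\{\alpha_r,\alpha_{r+1},\ldots,\alpha_n\}$ and $J=\{\alpha_r,\alpha_{r+1},\ldots,\alpha_m\}$ we have $|I|-|J|=n-m$, and the three families of generators described in Theorem~\ref{theorem:Intro1} specialize exactly to the three families appearing in the statement. So the task reduces to verifying the two hypotheses (1) and (2) for $X_I^J$.

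For hypothesis (1), I would appeal directly to Lemma~\ref{lemma:condition(1)XIJ}, which asserts that for \emph{any} pair $J\subset I\subset\Delta$ the circle $S$ preserves $X_I^J$ and its $S$-fixed point set equals $\{w_K \mid J\subset K\subset I\}$. The containment $(X_I^J)^S\subset (\Pet_\Phi)^S=\{w_K \mid K\subset\Delta\}$ is immediate from the definition of the intersection, and the constraint $J\subset K\subset I$ is the standard Bruhat-order characterization of when $w_K$ lies in $X_{w_I}$ and simultaneously in $\Omega_{w_J}$ (in terms of parabolic longest elements, $w_K\le w_I$ iff $K\subset I$, and $w_K\ge w_J$ iff $K\supset J$). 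No type-$A$ hypothesis is needed here.

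For hypothesis (2) I would invoke Proposition~\ref{proposition:condition(2)XIJ}, which is where the type-$A_n$ assumption and the interval form of $I,J$ enter. The expected Poincaré polynomial is $(1+q^2)^{n-m}$, which equals $(1+q^2)^{|I|-|J|}$ as required. This is the substantive geometric input, and is the main obstacle of the argument: one needs a cell-type decomposition or combinatorial count compatible with the intersection $X_{w_I}\cap \Omega_{w_J}\cap \Pet_\Phi$ whose cells are parametrized so that the generating function factors as claimed. The natural approach is to combine Precup's affine paving of $\Pet_\Phi$ (as used for $X_I$ via $\Poin(X_I,q)=(1+q^2)^{|I|}$ in Proposition~\ref{proposition:condition(2)XI}) with the restriction imposed by $\Omega_{w_J}$; the interval form of $J$ ensures the pieces that survive the opposite Schubert restriction are again indexed by subsets of a distinguished interval of simple roots, producing the claimed product.

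Once (1) and (2) are in place, I apply Theorem~\ref{theorem:Intro1} to conclude
\begin{align*}
H^*(X_I^J) \cong \Sym\t^*_\Q/\L_I^J,
\end{align*}
and simply unpack $\L_I^J$: $I\setminus J=\{\alpha_{m+1},\ldots,\alpha_n\}$ gives the relations $\alpha_k\varpi_k$ for $m+1\le k\le n$, $J=\{\alpha_r,\ldots,\alpha_m\}$ gives $\alpha_j$ for $r\le j\le m$, and $\Delta\setminus I=\{\alpha_1,\ldots,\alpha_{r-1}\}$ gives $\varpi_i$ for $1\le i\le r-1$. This matches the presentation in the theorem verbatim, completing the proof. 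The only nontrivial step is the Poincaré polynomial computation of Proposition~\ref{proposition:condition(2)XIJ}; everything else is formal bookkeeping around Theorem~\ref{theorem:Intro1}.
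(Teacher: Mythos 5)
Your proposal is correct and follows essentially the same route as the paper: verify condition (1) via Lemma~\ref{lemma:condition(1)XIJ} (the Bruhat-order characterization $w_K\le w_I\iff K\subset I$ and $w_K\ge w_J\iff K\supset J$), verify condition (2) via Proposition~\ref{proposition:condition(2)XIJ}, and then apply Theorem~\ref{theorem:Intro1} and unpack the ideal $\L_I^J$. You correctly locate the substantive input in the Poincar\'e polynomial computation; the paper carries this out via the decomposition $X_I^J=\coprod_{J\subset K\subset I}(\Omega_J\cap X_K^\circ)$ together with the explicit coordinate computation of Lemma~\ref{lemma:Xab} showing each piece is an affine space of dimension $|K|-|J|$ (vanishing of leading principal minors forcing the first $m-r+1$ entries of the top-left Hankel block to vanish), which is exactly where the interval form of $I$ and $J$ is used.
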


We expect that a candidate for $Y_I^J$ in Theorem~\ref{theorem:Intro1} is the intersection $X_I^J$ between the Peterson variety $\Pet_\Phi$ and the Richardson variety $X_{w_I} \cap \Omega_{w_J}$, but it is \emph{not} true in general.
In fact, one can verify that if $I=\{\alpha_1, \alpha_2, \alpha_3\}$ and $J=\{\alpha_1, \alpha_3\}$ in type $A_3$, then $X_I^J$ does \emph{not} satisfy the condition~(2) in Theorem~\ref{theorem:Intro1} (see Example~\ref{ex:non-irreducible case}). 
However, in this case, every irreducible component of $X_{\{\alpha_1, \alpha_2, \alpha_3\}}^{\{\alpha_1, \alpha_3\}}$ does satisfy the conditions~(1) and (2) in Theorem~\ref{theorem:Intro1}.
Hence, we can ask whether a candidate for $Y_I^J$ in Theorem~\ref{theorem:Intro1} is some irreduclble component of $X_I^J$ (Questions~\ref{question:OmegaJ} and \ref{question:XIJ}).

The paper is organized as follows. After reviewing the definition of Peterson variety
and the result of \cite{HHM}, we state our technical statement (Theorem~\ref{theorem:Intro1}) in Section~\ref{section:Peterson}. 
In Section~\ref{section:Relation} we derive the fundamental relations in the cohomology ring of $Y_I^J$.
We then prove Theorem~\ref{theorem:Intro1} in Section~\ref{section:Proof}.
In Section~\ref{section:Intersections of Peterson variety and Schubert varieties} we explain the work of Goldin--Mihalcea--Singh in \cite{GMS} to describe the cohomology rings of the intersections of Peterson variety and Schubert varieties \eqref{eq:cohomologyXIIntro}. 
We also verify the presentation \eqref{eq:cohomologyXIIntro} by using Theorem~\ref{theorem:Intro1}.
Consider the opposite situation in Section~\ref{section:Intersections of Peterson variety and opposite Schubert varieties} and we give an explicit presentation of the cohomology ring for the special case of $\Omega_J$ in type $A$ (Theorem~\ref{theorem:Intro3}). 
Finally, we give an explicit presentation of the cohomology ring for the special case of $X_I^J$ in type $A$ (Theorem~\ref{theorem:Intro4}) in Section~\ref{section:Intersections of Peterson variety and Richardson varieties}. 
Note that Sections~\ref{section:Intersections of Peterson variety and opposite Schubert varieties} and \ref{section:Intersections of Peterson variety and Richardson varieties} have questions about a candidate for $Y_I^J$ in Theorem~\ref{theorem:Intro1}.

\bigskip
\noindent \textbf{Acknowledgements.}  
The author is grateful to Mikiya Masuda, Hiraku Abe, Hideya Kuwata, and Haozhi Zeng for valuable discussions. 
The author is also grateful to the anonymous referee for many perceptive comments and suggestions.
This research was partly supported by Osaka City University Advanced Mathematical Institute (MEXT Joint Usage/Research Center on Mathematics and Theoretical Physics).
The author is supported in part by JSPS Grant-in-Aid for Young Scientists: 19K14508.

\bigskip

\section{Peterson varieties} \label{section:Peterson}

In this section we review the definition of Peterson varieties for all Lie types and an explicit presentation of the cohomology ring of Peterson varieties given in \cite{HHM}.

Let $G$ be a semisimple linear algebraic group over $\C$ of rank $\rank$.
Fix a Borel subgroup $B$ of $G$ and a maximal torus $T$ in $B$.
We denote by $\t \subset \b \subset \g$ the Lie algebras of $T \subset B \subset G$. 
Let $\Phi$ be the associated root system and $\g_\alpha$ denotes the root space for a root $\alpha$.
Fix a simple system $\Delta:=\{\alpha_1,\ldots,\alpha_\rank\}$. 
Let $N$ be a regular nilpotent element of $\g$, namely $N$ is nilpotent and its $G$-orbit of the
adjoint action has the largest possible dimension. 
Then the Peterson variety is defined by 
\begin{align*}
\Pet_{\Phi}:=\{gB \in G/B \mid \Ad(g^{-1})(N) \in \b \oplus \bigoplus_{i=1}^\rank \g_{-\alpha_i} \},
\end{align*}
which is a subvariety of the flag variety $G/B$. 
Any regular nilpotent element is $G$-conjugate to the regular nilpotent element of the form $\sum_{i=1}^\rank E_{\alpha_i}$ where $E_{\alpha_i}$ is a basis of the root space $\g_{\alpha_i}$ for each $1 \leq i \leq \rank$. 
Since an arbitrary Peterson variety associated to an arbitrary regular nilpotent element $N$ is isomorphic to the specific Peterson variety associated to the regular nilpotent element $\sum_{i=1}^\rank E_{\alpha_i}$, we may assume that $N=\sum_{i=1}^\rank E_{\alpha_i}$ (cf. \cite[Lemma~5.1]{HaTy17}). 
It is known that $\Pet_{\Phi}$ is irreducible and its complex dimension is equal to $\rank$ (\cite{AFZ, Pre13, Pre18}).
Also, $\Pet_{\Phi}$ has a singular point in general (\cite{IY, Kos}).
The following result gives a formula of the Poincar\'e polynomial of $\Pet_\Phi$, which includes the fact that the odd Betti numbers of $\Pet_\Phi$ vanish.

\begin{theorem} $($\cite[Corollary~4.13]{Pre13}$)$ \label{theorem:PoincarePolynomialPet}
The Poincar\'e polynomial of $\Pet_\Phi$ is given by 
\begin{align*}
\Poin(\Pet_\Phi,\q)=(1+q^2)^\rank.
\end{align*} 
\end{theorem}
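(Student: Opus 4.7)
The plan is to construct an affine paving of $\Pet_\Phi$ whose cells are indexed by the subsets $K\subseteq\Delta$, with the cell corresponding to $K$ being an affine space of complex dimension $|K|$. The natural candidate paving is obtained by intersecting $\Pet_\Phi$ with the Bruhat cells of $G/B$, and the index set $\{w_K:K\subseteq\Delta\}$ of nonempty pieces should match the $S$-fixed point set recorded after the definition of $\Pet_\Phi$.

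First, I would decompose $\Pet_\Phi$ via the Bruhat decomposition of $G/B$,
$$\Pet_\Phi \;=\; \bigsqcup_{w\in W}\Pet_\Phi^\circ(w),\qquad \Pet_\Phi^\circ(w):=\Pet_\Phi\cap BwB/B.$$
Each piece $\Pet_\Phi^\circ(w)$ is $T$-invariant, and when nonempty it contains the $T$-fixed point $wB$. Comparing with the known $S$-fixed locus $\{w_K\}_{K\subseteq\Delta}$ predicts that only the indices $w=w_K$ contribute.

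Second, I would verify this prediction by parametrizing each $BwB/B$ via root-subgroup coordinates on the appropriate unipotent radical, and then translating the Peterson condition $\Ad(g^{-1})(N)\in\b\oplus\bigoplus_{i=1}^\rank\g_{-\alpha_i}$ into an explicit system of polynomial equations in those coordinates. The root-theoretic analysis should show that the system has solutions only when $w$ is the longest element $w_K$ of some standard parabolic subgroup $W_K$, and that for $w=w_K$ the solution set is isomorphic to $\C^{|K|}$. Once the affine paving by cells of real dimension $2|K|$ is established, the Poincar\'e polynomial reads off immediately as
$$\Poin(\Pet_\Phi,\q)\;=\;\sum_{K\subseteq\Delta}\q^{2|K|}\;=\;\prod_{i=1}^\rank(1+\q^2)\;=\;(1+\q^2)^\rank,$$
which also confirms the vanishing of odd Betti numbers.

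The main obstacle is the second step: solving the Peterson equations on each Bruhat cell to identify the nonempty cells and to compute their dimensions uniformly across Lie types. The difficulty comes from the fact that $\Pet_\Phi$ is singular, so one cannot simply invoke Bia\l ynicki-Birula for the $S$-action; one has to produce the affine paving by hand. Precup's approach in \cite{Pre13} is to treat $\Pet_\Phi$ inside the larger framework of regular nilpotent Hessenberg varieties (the relevant Hessenberg space here being $\b\oplus\bigoplus_i\g_{-\alpha_i}$), and to prove the affine paving and the dimension formula $\dim_\C\Pet_\Phi^\circ(w_K)=|K|$ as specializations of a general combinatorial criterion on the Hessenberg function. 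I would follow the same strategy; the payoff is that once the cell structure is in place the Poincar\'e polynomial is a one-line computation.
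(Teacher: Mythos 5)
Your proposal is correct and follows essentially the same route as the cited source [Pre13] that the paper relies on for this statement: an affine paving of $\Pet_\Phi$ by the pieces $X_K^\circ = X_{w_K}^\circ \cap \Pet_\Phi \cong \C^{|K|}$ for $K \subseteq \Delta$, which is exactly the decomposition the paper itself invokes later (in its Lemma on which Schubert cells meet $\Pet_\Phi$ and in its computation of $\Poin(X_I,\q)$). The concluding binomial sum $\sum_{K\subseteq\Delta}\q^{2|K|}=(1+\q^2)^{\rank}$ is the same one-line finish.
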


The maximal torus $T$ acts on the flag variety $G/B$. 
This $T$-action does not preserve the Peterson variety $\Pet_{\Phi}$.
However, it is known that there exists a one-dimensional subtorus $S$ of $T$ such that the $S$-action on $G/B$ by restriction  preserves $\Pet_{\Phi}$. 
This $S$-action is introduced by Harada and Tymoczko (\cite{HaTy11, HaTy17}) and we explain the definition of $S$ below. 
Let $\phi: T \to (\C^*)^\rank$ be a homomorphism defined by $\phi(g):=(\alpha_1(g),\ldots,\alpha_\rank(g))$ for $g \in T$. 
Define $S$ as the connected component of the identity in $\phi^{-1}(\{(c,\ldots,c) \mid c \in \C^* \})$. 
It then follows from the definition of $S$ that the one-dimensional subtorus $S$ preserve $\Pet_{\Phi}$ (\cite[Lemma~5.1-(3)]{HaTy17}).

We next explain the $S$-fixed points of the Peterson variety $\Pet_{\Phi}$. 
As is well-known, the set of the $T$-fixed points $(G/B)^T$ of the flag variety $G/B$ can be identified with the Weyl group $W$.
It follows from \cite[Lemma~5.1-(3)]{HaTy17} that the set of $S$-fixed points $(G/B)^S$ of $G/B$ is equal to $(G/B)^T$ and hence we have 
$$
(\Pet_{\Phi})^S = (G/B)^T \cap \Pet_{\Phi}.
$$
Thus, one can regard the $S$-fixed points $(\Pet_{\Phi})^S$ of $\Pet_\Phi$ as a subset of the Weyl group $W \cong (G/B)^T$. 
The $S$-fixed points $(\Pet_{\Phi})^S \subset W$ is described as follows. 
First, we may regard the set of simple roots $\Delta$ as the corresponding Dynkin diagram.
Every subset $K \subset \Delta$ can be decomposed as $K=K_1 \times K_2 \times \cdots \times K_m$ into connected Dynkin diagrams $K_i$'s.
Then we define $w_K \in W$ by 
\begin{align*}
w_K = w_0^{K_1} \times w_0^{K_2} \times \cdots \times w_0^{K_m}
\end{align*}
where $w_0^{K_i}$ is the longest element of the parabolic subgroup $W_{K_i}$ which is generated by simple reflections $s_k$ for $\alpha_k \in K_i$. 
Note that $w_K$ is the identity element whenever $K$ is the empty set.

\begin{lemma} $($\cite[Proposition~5.8]{HaTy17}$)$ \label{lemma:PetFixedPoints}
The set of $S$-fixed points $(\Pet_\Phi)^S$ is given by $\{w_K \in W \mid K \subset \Delta \}$. 
\end{lemma}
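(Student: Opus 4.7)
The plan is to translate the defining condition $wB \in \Pet_\Phi$ into a purely combinatorial condition on Weyl group elements, and then to identify its solutions with the $w_K$'s. Since we have already observed $(\Pet_\Phi)^S = (G/B)^T \cap \Pet_\Phi$, and $(G/B)^T$ is canonically identified with $W$ via $w \mapsto \dot w B$ for a lift $\dot w \in N_G(T)$, the task reduces to deciding, for each $w \in W$, whether $\Ad(\dot w^{-1})(N) \in \b \oplus \bigoplus_{i=1}^\rank \g_{-\alpha_i}$. Substituting $N = \sum_i E_{\alpha_i}$ and using $\Ad(\dot w^{-1}) E_{\alpha_i} \in \g_{w^{-1}(\alpha_i)} \setminus \{0\}$, this is equivalent to the combinatorial requirement
\begin{align*}
(\ast) \qquad w^{-1}(\alpha_i) \in \Phi^+ \cup (-\Delta) \quad \text{for every } i \in [\rank].
\end{align*}

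For the easy inclusion I would verify that every $w_K$ satisfies $(\ast)$. Since $w_K$ factors as a commuting product of longest elements $w_0^{K_j}$ of the parabolic subgroups attached to the connected components $K_j$ of $K$, it is an involution, so it suffices to analyze its action on simple roots. For $\alpha_i \in K_j$, the factor $w_0^{K_j}$ sends $\alpha_i$ to the negative of a simple root lying in $K_j$ (the classical fact that $-w_0$ of an irreducible root system induces a Dynkin diagram involution), while the remaining factors fix $\alpha_i$ because they involve simple reflections on a disjoint component; for $\alpha_i \notin K$, each simple reflection $s_k$ (with $\alpha_k \in K$) occurring in a reduced expression for $w_K$ satisfies $s_k(\alpha_i) = \alpha_i - \langle \alpha_i, \alpha_k^\vee\rangle \alpha_k$ with $\langle \alpha_i, \alpha_k^\vee\rangle \leq 0$, so iterating yields $w_K(\alpha_i) = \alpha_i + (\text{non-negative combination of elements of }K) \in \Phi^+$. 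Combined with $w_K^{-1} = w_K$, this gives $(\ast)$.

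The main obstacle is the converse inclusion: given $w \in W$ satisfying $(\ast)$, produce a $K \subset \Delta$ with $w = w_K$. My strategy is to define
\begin{align*}
K := \{\alpha_i \in \Delta \mid w^{-1}(\alpha_i) \in -\Delta\}
\end{align*}
and to prove $w^{-1} = w_K$. The key intermediate claim is that $w^{-1}$ actually lies in the parabolic subgroup $W_K$, which I would establish by showing that the inversion set $\{\alpha \in \Phi^+ : w^{-1}(\alpha) < 0\}$ is contained in the positive roots $\Phi_K^+$ of the root subsystem generated by $K$. For a positive root $\alpha = \sum_i c_i \alpha_i$, condition $(\ast)$ expresses $w^{-1}(\alpha)$ as a non-negative combination of positive roots (from the $\alpha_i \notin K$) plus a non-positive combination of simple roots (from the $\alpha_i \in K$); demanding that $w^{-1}(\alpha)$ itself be a root, and carefully controlling the interaction between components inside and outside $K$ by induction on the height of $\alpha$, should force $\alpha \in \Phi_K^+$ whenever $w^{-1}(\alpha) < 0$. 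Once $w^{-1} \in W_K$ is known, the defining property $w^{-1}(K) \subset -\Delta$ identifies $w^{-1}$ inside $W_K$ as the unique element sending every simple root of $W_K$ to a negative root, namely the longest element $w_K$ of $W_K$; since $w_K$ is an involution, this yields $w = w_K^{-1} = w_K$, completing the identification.
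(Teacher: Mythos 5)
This lemma is quoted in the paper from \cite[Proposition~5.8]{HaTy17} with no internal proof, so there is nothing in the text to compare your argument against; I can only assess it on its own terms. Your reduction of the fixed-point condition to the combinatorial criterion $(\ast)$ is correct (the elements $\Ad(\dot w^{-1})E_{\alpha_i}$ span the distinct root spaces $\g_{w^{-1}(\alpha_i)}$, so membership in $\b \oplus \bigoplus_i \g_{-\alpha_i}$ can be tested root by root), and your verification that every $w_K$ satisfies $(\ast)$ is complete.

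The gap is in the converse, which is where all the content of the lemma sits: the claim that the inversion set of $v:=w^{-1}$ is contained in $\Phi_K^+$ is only asserted (``should force''), and the difficulty you gesture at is real. Concretely, set $\Gamma \coloneqq \{-v(\alpha_k) \mid \alpha_k \in K\} \subset \Delta$ and take $\alpha=\sum_i c_i\alpha_i \in \Phi^+$ with $v(\alpha)<0$. Applying the functional $f=\sum_{\gamma\in\Delta\setminus\Gamma}\varpi_\gamma^\vee$ (which vanishes on each $v(\alpha_k)=-\gamma_k$ with $\alpha_k\in K$ and is $\geq 0$ on each positive root $v(\alpha_j)$, $\alpha_j\notin K$) gives $0 \geq f(v(\alpha)) = \sum_i c_i f(v(\alpha_i)) \geq 0$, hence only that every $\alpha_i \in \mathrm{supp}(\alpha)\setminus K$ has $v(\alpha_i)\in\Phi_\Gamma^+$; the positivity bookkeeping you describe does not by itself exclude such $\alpha_i$. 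The missing ingredient is the observation that $v^{-1}(\gamma_k)=-\alpha_k$, so $v^{-1}(\Gamma)=-K$ and therefore $v^{-1}(\Phi_\Gamma^+)\subset\Phi_K^-$; then $v(\alpha_i)\in\Phi_\Gamma^+$ would make the positive simple root $\alpha_i=v^{-1}(v(\alpha_i))$ a negative root, a contradiction. With this inserted, $\mathrm{supp}(\alpha)\subset K$ follows, so $N(v)\subset\Phi_K^+$, $v\in W_K$, and your concluding identification of $v$ with the longest element $w_K$ (via $v(K)\subset -\Delta\cap\Phi_K=-K$, so $v$ inverts all of $\Phi_K^+$) is fine. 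In short: correct architecture and a complete easy direction, but the decisive step of the hard direction is missing and needs the extra input above to close.
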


We now turn to an explicit presentation of the cohomology ring of the Peterson variety $\Pet_{\Phi}$ given by \cite{HHM}. (For type $A$, it was given in \cite{FHM}.)
Here and below, we may identify the character group $\Hom(T,\C^*)$ of $T$ with a lattice $\t^*_{\Z}$ through differential at the identity element of $T$.
We also denote the symmetric algebra of $\t^*_{\Q}:=\t^*_{\Z} \otimes_{\Z} \Q$ by 
\begin{align*}
\RR = \Sym \t^*_{\Q}.
\end{align*}
Let $\{\varpi_1,\ldots,\varpi_\rank\}$ be the set of the fundamental weights.
Then we have that
\begin{align*}
\RR=\Q[\varpi_1,\ldots,\varpi_\rank]=\Q[\alpha_1,\ldots,\alpha_\rank]
\end{align*}
since $\{\varpi_1,\ldots,\varpi_\rank\}$ or $\{\alpha_1,\ldots,\alpha_\rank\}$ is a $\Q$-basis of $\t^*_{\Q}$.
Here, we briefly remind the reader of the relationship between the simple roots $\{\alpha_1,\ldots,\alpha_\rank \}$, the fundamental weights $\{\varpi_1,\ldots,\varpi_\rank \}$, the Killing form, and the Cartan matrix. We refer the reader to \cite{Hum72} for the details.
For $x, y \in \g$, the Killing form $\kappa$ is defined by $\kappa(x,y)=\mbox{trace} (\mbox{ad}(x) \, \mbox{ad}(y))$ which is bilinear and symmetric form on $\g$. 
As is well-known, the Killing form $\kappa$ is non-degenerate since $\g$ is semisimple. 
It is also known that the restriction of $\kappa$ to $\t$ is non-degenerate, so this allows us to identify $\t$ with its dual $\t^*$. 
We may transfer this form to $\t^*$, denoted by $( \ , \ )$. 
One has $(\alpha,\beta) \in \Q$ for any $\alpha, \beta \in \Phi$, so we obtain a non-degenerate form on $\t^*_\Q$. 
We can also see that the form on $\t^*_\Q$ is positive definite. 
For $\alpha, \beta \in \Phi$, we write $\langle \alpha, \beta \rangle$ for the number $\frac{2(\alpha,\beta)}{(\beta,\beta)} \in \Z$. 
The Cartan matrix $(c_{ij})_{i,j \in [\rank]}$ is defined by $c_{ij}=\langle \alpha_i, \alpha_j \rangle$ and we have $\alpha_i = \sum_{j=1}^{\rank} c_{ij} \varpi_j = \sum_{j=1}^{\rank} \langle \alpha_i, \alpha_j \rangle \varpi_j$ by the definition of the fundamental weights. 

For a homomorphism $\chi: B \rightarrow \C^*$, we can consider the $B$-action on $G \times \C_{\chi}$ given by $b \cdot (g, z):=(gb^{-1},\chi(b)z)$ for $b \in B$ and $(g,z) \in G \times \C_{\chi}$. Note that $\C_{\chi}$ denotes the one-dimensional $B$-module via $\chi: B \rightarrow \C^*$.
The quotient space $G \times_B \C_{\chi}$ is the line bundle over the flag variety $G/B$. 
Since any homomorphism $\alpha: T \rightarrow \C^*$ extends to a homomorphism $\tilde\alpha: B \rightarrow \C^*$, we can assign the line bundle $L_{\alpha}:=G \times_B \C_{\tilde \alpha}$ to each $\alpha \in \t^*_{\Z}$.
By taking the first Chern class $c_1(L_{\alpha}^*)$ of the dual line bundle $L_{\alpha}^*$, we have the following ring homomorphism
\begin{align} \label{eq:homomorphism_flag}
\RR \rightarrow H^*(G/B); \ \ \ \alpha \mapsto c_1(L_{\alpha}^*) 
\end{align} 
which doubles the grading on $\RR$.
By Borel's result (\cite{Bor}), the map above is surjective and the kernel is the ideal generated by the $W$-invariants in $\RR$ with zero constant term. 
Since the restriction map $H^*(G/B) \rightarrow H^*(\Pet_\Phi)$ is surjective (\cite[Theorem~3.5]{Dre15}), one can see that the composition of the map \eqref{eq:homomorphism_flag} and the restriction map
\begin{align} \label{eq:homomorphism_Pet}
\varphi: \RR \rightarrow H^*(G/B) \rightarrow H^*(\Pet_\Phi); \ \ \ \alpha \mapsto c_1(L_{\alpha}^*)
\end{align} 
is also surjective.
Here and below, by abuse of notation we also denote by the same symbol $L_{\alpha}$ the restriction $L_{\alpha}|_{\Pet_{\Phi}}$. 

\begin{theorem} $($\cite[Theorem~4.1]{HHM}$)$ \label{theorem:CohomologyPet}
The kernel of the map $\varphi$ is the ideal generated by $\alpha_1\varpi_1, \ldots, \alpha_\rank\varpi_\rank$.
In particular, we obtain the following explicit presentation 
\begin{align} \label{eq:Cohomology_Pet}
H^*(\Pet_\Phi) \cong \RR/(\alpha_i \varpi_i \mid i \in [\rank])
\end{align}
where $[\rank]:=\{1,2,\ldots,\rank\}$. 
\end{theorem}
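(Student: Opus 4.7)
The plan is to prove the theorem in three steps: first, show that each $\alpha_i\varpi_i$ lies in $\ker\varphi$; second, compute the Poincar\'e polynomial of $\RR/(\alpha_i\varpi_i \mid i \in [\rank])$; third, conclude by comparing dimensions with $H^*(\Pet_\Phi)$.

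For the first step, I would pass to $S$-equivariant cohomology. Since the odd Betti numbers of $\Pet_\Phi$ vanish by Theorem~\ref{theorem:PoincarePolynomialPet}, the variety $\Pet_\Phi$ is $S$-equivariantly formal and the restriction to fixed points
\begin{align*}
H_S^*(\Pet_\Phi) \hookrightarrow \bigoplus_{w_K \in (\Pet_\Phi)^S} H_S^*(\pt) = \bigoplus_{K \subset \Delta} \Q[t]
\end{align*}
is injective, while $H^*(\Pet_\Phi) = H_S^*(\Pet_\Phi)/(t)$. I would lift $\alpha_i\varpi_i$ to an $S$-equivariant class using the equivariant first Chern classes of $L_{\alpha_i}^*$ and $L_{\varpi_i}^*$, and then reduce the desired vanishing to showing that this lift restricts to a polynomial divisible by $t$ at every fixed point $w_K$. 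Using the standard restriction formula and the fact that $S$ was chosen so that $\alpha_j|_S = t$ for every $j$, this becomes a root-theoretic identity: the fixed-point value of $\alpha_i\varpi_i$ at $w_K$ factors either through $w_K\cdot\alpha_i$, which vanishes modulo $t$ when $\alpha_i\notin K$ (by the action of the relevant $w_0^{K_j}$), or through $w_K\cdot\varpi_i$, which vanishes modulo $t$ when $\alpha_i\in K$, so the product is divisible by $t$ in either case.

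For the second step, I would grade $\RR$ cohomologically, placing each $\varpi_j$ in degree $2$ so that each $\alpha_i\varpi_i$ is a degree-$4$ form. It suffices to prove that $\alpha_1\varpi_1,\ldots,\alpha_\rank\varpi_\rank$ form a regular sequence in $\RR$; the Koszul complex then yields
\begin{align*}
\Poin\bigl(\RR/(\alpha_i\varpi_i \mid i\in[\rank]),\q\bigr) = \frac{(1-\q^4)^\rank}{(1-\q^2)^\rank} = (1+\q^2)^\rank.
\end{align*}
Regularity of this length-$\rank$ homogeneous sequence in the $\rank$-dimensional polynomial ring $\RR$ is equivalent to the common zero locus in affine $\rank$-space being $\{0\}$. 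Since $V(\alpha_i\varpi_i) = V(\alpha_i) \cup V(\varpi_i)$, this locus decomposes into $2^\rank$ linear subspaces, indexed by subsets $I\subset[\rank]$ and cut out by $\{\alpha_i\}_{i\in I} \cup \{\varpi_j\}_{j\notin I}$. Each such subspace reduces to $\{0\}$ because the $\rank$ chosen linear forms are linearly independent, which follows from the nonsingularity of the corresponding principal minor of the Cartan matrix, valid in finite type.

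For the third step, Step~1 gives a surjective graded $\Q$-algebra map $\bar{\varphi}\colon \RR/(\alpha_i\varpi_i) \twoheadrightarrow H^*(\Pet_\Phi)$, and Step~2 together with Theorem~\ref{theorem:PoincarePolynomialPet} shows that both sides have Poincar\'e polynomial $(1+\q^2)^\rank$; hence $\bar{\varphi}$ is an isomorphism. The main obstacle is Step~1: verifying the equivariant vanishing modulo $t$ requires a careful case analysis of $w_K \cdot \alpha_i$ and $w_K \cdot \varpi_i$ based on whether $\alpha_i$ lies inside or outside the Dynkin subdiagram $K$, using identities about longest elements of parabolic Weyl subgroups. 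Steps~2 and~3 are then essentially linear algebra and a formal dimension-count argument.
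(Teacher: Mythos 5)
Your Steps 2 and 3 are sound and match the paper's own route (the zero-locus criterion for regularity of $\alpha_1\varpi_1,\ldots,\alpha_\rank\varpi_\rank$, the resulting Hilbert series $(1+\q^2)^\rank$, and comparison with $\Poin(\Pet_\Phi,\q)$). The genuine gap is in Step 1. Since $H^2_S(\pt)=\Q\,t$, \emph{every} positive-degree equivariant class restricts at every fixed point to a multiple of $t$, so the criterion ``the lift of $\alpha_i\varpi_i$ restricts to a polynomial divisible by $t$ at each $w_K$'' is vacuously true and detects nothing; moreover, componentwise divisibility by $t$ does not imply membership in $t\cdot H^*_S(\Pet_\Phi)$ (if it did, all of $H^{>0}(\Pet_\Phi)\cong H^{>0}_S(\Pet_\Phi)/(t)$ would vanish). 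So your reduction ``divisible by $t$ at all fixed points $\Rightarrow \alpha_i\varpi_i\in\ker\varphi$'' is invalid, and the case analysis on $w_K(\alpha_i)$ and $w_K(\varpi_i)$ modulo $t$ carries no information.

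What is actually needed, and what the paper records in Propositions~\ref{proposition:HHM} and~\ref{proposition:HHM2} following \cite{HHM}, is to take the equivariant lift with the correct lower-order $t$-corrections, namely
\begin{align*}
\bigl(c_1^S(L_{\alpha_i}^*)-t\bigr)\,\bigl(c_1^S(L_{\varpi_i}^*)+(\textstyle\sum_{j=1}^{\rank} d_{ij})\,t\bigr),
\end{align*}
and to show it restricts to \emph{exactly zero} at every $w_K$: if $\alpha_i\in K$ then $w_K(\alpha_i)=-\alpha_j$ for some $\alpha_j\in K$, so the first factor restricts to $t-t=0$; if $\alpha_i\notin K$ then $w_K(\varpi_i)=\varpi_i$, so the second factor restricts to $-(\sum_j d_{ij})t+(\sum_j d_{ij})t=0$. (Note your case assignment is also reversed: the $\alpha_i$-factor is the one controlled when $\alpha_i\in K$, and the $\varpi_i$-factor when $\alpha_i\notin K$.) Injectivity of restriction to fixed points then kills this class in $H^*_S(\Pet_\Phi)$, and setting $t=0$ yields $\alpha_i\varpi_i\in\ker\varphi$. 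With Step 1 repaired in this way, your Steps 2 and 3 finish the argument exactly as in the paper.
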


The proof of the presentation above in \cite{HHM} is based on the localization technique, which is frequently used for giving an explicit presentation of the (equivariant) cohomology ring.
We now briefly recall the argument.
We first note that the restriction map 
\begin{align*}
H_S^*(\Pet_\Phi) \rightarrow H_S^*\big((\Pet_\Phi)^S\big)=\bigoplus_{w \in (\Pet_\Phi)^S} H_S^*(\pt) 
\end{align*}
is injective since the odd degree cohomology groups of $\Pet_\Phi$ vanish (\cite[Theorem~4.10]{Pre13}).
By the injectivity of the restriction map above, one can calculate the relations in $H_S^*(\Pet_\Phi)$ because we know the fixed points data (Lemma~\ref{lemma:PetFixedPoints}).
From this, one can construct the homomorphism from a quotient ring (which is an equivariant version of the right hand side in \eqref{eq:Cohomology_Pet}) to the equivariant cohomology $H^*_S(\Pet_\Phi)$.
Since we know a formula of the Poincar\'e polynomial for $\Pet_\Phi$ (Theorem~\ref{theorem:PoincarePolynomialPet}), one can compare the Hilbert series of the quotient ring and $H^*_S(\Pet_\Phi)$. 
As a result, we obtain an explicit presentation of $H^*_S(\Pet_\Phi)$, which yields our presentation \eqref{eq:Cohomology_Pet} as desired. 
To summarize, we need the following two data:
\begin{enumerate}
\item fixed points $(\Pet_\Phi)^S$; 
\item Poincar\'e polynomial $\Poin(\Pet_\Phi,\q)$.
\end{enumerate}
From this point we prepare the following technical statement.

\begin{theorem} \label{theorem:Cohomology1}
Let $I$ and $J$ be two subsets of the simple roots $\Delta=\{\alpha_1,\ldots,\alpha_\rank\}$ with $J \subset I$.
Let $Y_I^J$ be a subvariety of the Peterson variety $\Pet_\Phi$ satisfying the following two conditions.
\begin{enumerate}
\item The $S$-action on $\Pet_{\Phi}$ preserves $Y_I^J$ and the set of $S$-fixed points is 
\begin{align*}
(Y_I^J)^S=\{w_K \in W \mid K \subset \Delta, J \subset K \subset I \}.
\end{align*}
\item The Poincar\'e polynomial of $Y_I^J$ is equal to 
\begin{align*}
\Poin(Y_I^J,\q)=(1+q^2)^{|I|-|J|}.
\end{align*}
\end{enumerate}
Then, the map $\varphi$ in \eqref{eq:homomorphism_Pet} composed with the restriction map $H^*(\Pet_\Phi) \rightarrow H^*(Y_I^J)$, denoted by
\begin{align*}
\varphi_I^J: \RR \xrightarrow{\varphi} H^*(\Pet_\Phi) \rightarrow H^*(Y_I^J)
\end{align*}
is surjective. (Note that this is equivalent to saying that the restriction map $H^*(\Pet_\Phi) \rightarrow H^*(Y_I^J)$ is surjective). 
Moreover, the kernel of $\varphi_I^J$ is the ideal $\L_I^J$ generated by the following three types of elements:
\begin{align*}
&\alpha_k \varpi_k \ \ \ \textrm{for} \ k \in [\rank] \ \textrm{with} \ \alpha_k \in I \setminus J; \\
&\alpha_j  \ \ \ \ \ \ \textrm{for} \ j \in [\rank] \ \textrm{with} \ \alpha_j \in J; \\
&\varpi_i \ \ \ \ \ \ \textrm{for} \ i \in [\rank] \ \textrm{with} \ \alpha_i \in \Delta \setminus I. 
\end{align*}
In particular, we obtain the following isomorphism of graded $\Q$-algebras: 
\begin{align*}
H^*(Y_I^J) \cong \RR/\L_I^J.
\end{align*}
\end{theorem}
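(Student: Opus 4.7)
The plan is to follow the $S$-equivariant localization strategy used in \cite{HHM} for the Peterson variety itself, and recover the ordinary cohomology statement by specializing the equivariant parameter to zero. Condition~(2) gives $H^{\mathrm{odd}}(Y_I^J)=0$, so $Y_I^J$ is equivariantly formal for the $S$-action; in particular $H^*_S(Y_I^J)$ is a free $H^*_S(\pt)$-module, and the restriction
\begin{equation*}
H^*_S(Y_I^J)\ \hookrightarrow\ H^*_S\bigl((Y_I^J)^S\bigr)\ =\ \bigoplus_{K:\,J\subset K\subset I} H^*_S(\pt)
\end{equation*}
is injective, which reduces every cohomological computation to bookkeeping inside this direct sum.

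The argument then splits into three steps. First, one verifies that each family of generators of $\L_I^J$ lies in $\ker \varphi_I^J$: the relations $\alpha_k\varpi_k$ for $\alpha_k\in I\setminus J$ already live in $\ker\varphi$ by Theorem~\ref{theorem:CohomologyPet} and hence restrict to zero on $Y_I^J$, while the new relations $\alpha_j$ (for $\alpha_j\in J$) and $\varpi_i$ (for $\alpha_i\in\Delta\setminus I$) would be produced in Section~\ref{section:Relation} by constructing equivariant lifts in $H^*_S(Y_I^J)$ and checking, via the localization above, that each fixed-point evaluation at $w_K$ with $J\subset K\subset I$ lands in a summand of $H^*_S(\pt)$ that becomes trivial after specializing the equivariant parameter. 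This produces a factoring surjection $\bar\varphi_I^J:\RR/\L_I^J\twoheadrightarrow\mathrm{Im}(\varphi_I^J)$. Second, one promotes this to a surjection onto $H^*(Y_I^J)$; since $\varphi$ is already surjective by \cite{HHM}, it suffices to establish surjectivity of the restriction $H^*(\Pet_\Phi)\to H^*(Y_I^J)$, which I would derive from the equivariant analogue by taking the explicit module generators of $H^*_S(\Pet_\Phi)$ supplied by \cite{HHM} and verifying, via their explicit values on $(\Pet_\Phi)^S$, that their restrictions to $(Y_I^J)^S$ generate $H^*_S(Y_I^J)$ freely over $H^*_S(\pt)$.

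The final step compares Hilbert series: condition~(2) gives $H^*(Y_I^J)$ Poincaré polynomial $(1+q^2)^{|I|-|J|}$, and one checks that $\RR/\L_I^J$ has the same series, forcing $\bar\varphi_I^J$ to be an isomorphism and hence $\ker\varphi_I^J=\L_I^J$. The main obstacle I expect is this final computation, because $\L_I^J$ mixes the $\alpha$-basis and the $\varpi$-basis of $\t^*_\Q$: one first needs the $|J|+\rank-|I|$ linear forms $\{\alpha_j:\alpha_j\in J\}\cup\{\varpi_i:\alpha_i\in\Delta\setminus I\}$ to be linearly independent, so that $\RR$ modulo them becomes a polynomial ring in $|I|-|J|$ variables, and then needs the $|I|-|J|$ quadratic forms $\alpha_k\varpi_k$ to form a regular sequence in that polynomial ring. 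Non-degeneracy of the Cartan pairing between $\{\alpha_i\}$ and $\{\varpi_i\}$ should drive both claims, after which the Hilbert series collapses to $(1-q^4)^{|I|-|J|}/(1-q^2)^{|I|-|J|}=(1+q^2)^{|I|-|J|}$, matching $H^*(Y_I^J)$ and completing the argument.
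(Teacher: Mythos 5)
Your overall architecture matches the paper's: verify the three families of relations by evaluating equivariant lifts at the fixed points $w_K$ with $J\subset K\subset I$, show that the corresponding equivariant generators together with $t$ form a regular sequence so that $\RR[t]/\tilde\L_I^J$ has Hilbert series $(1+q^2)^{|I|-|J|}/(1-q^2)$, and conclude by comparing with the Hilbert series of $H^*_S(Y_I^J)$ before setting $t=0$. Your regular-sequence computation (linear independence of $\{\alpha_j\}_{\alpha_j\in J}\cup\{\varpi_i\}_{\alpha_i\in\Delta\setminus I}$, then regularity of the quadrics $\alpha_k\varpi_k$) is organized differently from the paper's, which instead observes that adjoining $t=0$ to the system $\theta_k=\xi_j=\nu_i=0$ forces all $\rank$ Peterson quadrics to vanish and so reduces to the known regular sequence for $\Pet_\Phi$; your route also works, provided you invoke invertibility of principal submatrices of the Cartan matrix rather than the vaguer ``non-degeneracy of the Cartan pairing.''

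The genuine gap is your second step, the surjectivity of $H^*(\Pet_\Phi)\to H^*(Y_I^J)$, which is the crux of the theorem and which you only assert. Your plan --- ``verify that the restrictions of the \cite{HHM} module generators generate $H^*_S(Y_I^J)$ freely'' --- presupposes a description of $H^*_S(Y_I^J)$ inside $\bigoplus_{K}\Q[t]$ that you do not have: $Y_I^J$ is only assumed to satisfy conditions (1) and (2), so no GKM-type characterization of its localized image is available, and exhibiting a free submodule of the image with the correct Hilbert series would require choosing the right $2^{|I|-|J|}$ monomials in the classes $\tilde p_{s_i}$ and proving $\Q[t]$-linear independence of their restrictions by an upper-triangularity argument over the poset $\{K \mid J\subset K\subset I\}$ --- none of which you carry out. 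The paper avoids this entirely via the localization theorem (\cite{Hsi}): in the commutative square comparing $\Pet_\Phi$ with $Y_I^J$, inverting $t$ identifies both equivariant cohomologies with the direct sums over their fixed points, the induced map of fixed-point sets is a projection of direct summands and hence surjective, so $\mathcal{T}^{-1}\psi_I^J$ is surjective and, by the Hilbert series count, an isomorphism; injectivity of $\psi_I^J$ itself then follows because both $\RR[t]/\tilde\L_I^J$ (where $t$ is a non-zero-divisor by the regular-sequence lemma) and $H^*_S(Y_I^J)$ (free over $\Q[t]$) inject into their localizations, and the desired surjectivity finally drops out of injectivity plus equality of Hilbert series. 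You should either import this localization argument or actually supply the explicit generator computation; as written, the surjectivity step does not go through.
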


\begin{remark} \label{remark:YIJ odd degree}
The second condition~(2) for $Y_I^J$ in Theorem~\ref{theorem:Cohomology1} implies that the odd degree cohomology groups of $Y_I^J$ vanish. 
We will use the injectivity of the restriction map 
\begin{align*} 
H_S^*(Y_I^J) \hookrightarrow H_S^*\big((Y_I^J)^S\big)=\bigoplus_{w \in (Y_I^J)^S} H_S^*(\pt) 
\end{align*} 
for the proof of Theorem~\ref{theorem:Cohomology1}.
\end{remark}

Before we prove Theorem~\ref{theorem:Cohomology1}, we see some application examples of Theorem~\ref{theorem:Cohomology1}.

\begin{example} \label{example:Cohomology1}
\begin{enumerate}
\item[(i)] Take $I=\Delta$ and $J=\emptyset$ in Theorem~\ref{theorem:Cohomology1}. Then, we can apply Theorem~\ref{theorem:Cohomology1} to the Peterson variety $Y_I^J=\Pet_\Phi$. 
In fact, we have $(\Pet_\Phi)^S=\{w_K \in W \mid K \subset \Delta \}$ by Lemma~\ref{lemma:PetFixedPoints} and the Poincar\'e polynomial of $\Pet_\Phi$ is given by $\Poin(\Pet_\Phi,\q)=(1+q^2)^\rank$ from Theorem~\ref{theorem:PoincarePolynomialPet}. 
\item[(ii)] In general, we consider the case when $J=\emptyset$ but $I$ is an arbitrary subset of $\Delta$ in Theorem~\ref{theorem:Cohomology1}. 
Let $X_I \coloneqq X_{w_I} \cap \Pet_\Phi$ be the intersection of the Schubert variety $X_{w_I}$ and the Peterson variety $\Pet_\Phi$ where we recall that the Schubert variety $X_w$ is the closure of the $B$-orbit of $wB$ in $G/B$ for $w \in W$.
By \cite{GMS} their intersections are identified with smaller Peterson varieties, so this with Theorem~\ref{theorem:CohomologyPet} yields that 
\begin{align*} 
H^*(X_I) \cong \RR/(\alpha_k \varpi_k \mid k \in [\rank] \ \textrm{with} \ \alpha_k \in I) +(\varpi_i \mid i \in [\rank] \ \textrm{with} \ \alpha_i \in \Delta \setminus I).
\end{align*}
On the other hand, the $S$-action on $\Pet_\Phi$ preserves $X_I$. We verify that $(X_I)^S =\{w_K \mid K \subset I \}$ and $\Poin(X_I,q)=(1+q^2)^{|I|}$ (Lemma~\ref{lemma:condition(1)XI} and Proposition~\ref{proposition:condition(2)XI}). 
Therefore, we can also apply Theorem~\ref{theorem:Cohomology1} for presenting the cohomology ring $H^*(X_I)$ as above.
The details will be discussed in Section~\ref{section:Intersections of Peterson variety and Schubert varieties}.
\item[(iii)] Consider the case when $I=\Delta$ but $J$ is an arbitrary subset of $\Delta$ in Theorem~\ref{theorem:Cohomology1}, which is the opposite situation to (ii). 
We denote by $\Omega_w \ (w \in W)$ the opposite Schubert variety which is the closure of the $B^-$-orbit of $wB$ in $G/B$. 
Here, $B^-$ denotes the Borel subgroup defined by $w_0Bw_0$ where $w_0$ is the longest element of $W$.
Consider the intersections $\Omega_J \coloneqq \Omega_{w_J} \cap \Pet_\Phi$. 
(Note that these intersections have been studied in \cite{AHKZ} for type $A$ which are related to a geometric interpretation for Peterson Schubert calculus.)
We will prove in Section~\ref{section:Intersections of Peterson variety and opposite Schubert varieties} that if $J$ is of the form $\{\alpha_1,\alpha_2,\ldots,\alpha_m\}$ for some $m$ in type $A_{n-1}$, then $\Omega_J$ satisfies the conditions~(1) and~(2) in Theorem~\ref{theorem:Cohomology1}. 
Thus, we obtain an explicit presentation of $H^*(\Omega_{\{\alpha_1,\alpha_2,\ldots,\alpha_m\}})$ in type $A_{n-1}$. 
\item[(iv)] In more general, we consider $X_I^J \coloneqq X_I \cap \Omega_J$, which is the intersection between the Richardson variety $X_{w_I} \cap \Omega_{w_J}$ and the Peterson variety $\Pet_\Phi$. 
If $I=\{\alpha_r,\alpha_{r+1},\ldots,\alpha_{\rank-1}\}$ and $J=\{\alpha_r,\alpha_{r+1},\ldots,\alpha_m\}$ for some $r$ and $m$ in type $A_{n-1}$, then we will see an explicit presentation of the cohomology ring of $X_I^J$ in Section~\ref{section:Intersections of Peterson variety and Richardson varieties}. 
\end{enumerate}
\end{example}

\bigskip

\section{Relations in $H^*_S(Y_I^J)$} \label{section:Relation}

In this section we first recall the fundamental relations in $H^*_S(\Pet_\Phi)$ given by \cite{HHM}.
Then we describe relations in $H^*_S(Y_I^J)$, which are modified from the fundamental relations in $H^*_S(\Pet_\Phi)$.

For a homomorphism $\alpha: T \rightarrow \C^*$, let $\C_\alpha$ be the complex $1$-dimensional representation of $T$ defined by $g \cdot z =\alpha(g) z$ for all $g \in T$ and $z \in \C_\alpha$. 
It is known that the correspondence 
\begin{align*}
\t^*_{\Q}=\t^*_{\Z} \otimes_{\Z} \Q \rightarrow H^2_T(\pt); \ \alpha \mapsto c_1^T(\C_\alpha)
\end{align*}
gives an isomorphism where $c_1^T(\C_{\alpha})$ denotes the $T$-equivariant first Chern class of $\C_{\alpha}$.
For simplicity, we write 
$$
\alpha^T=c_1^T(\C_{\alpha}) \in H^2_T(\pt)
$$ 
for any $\alpha \in \t^*_{\Q}$. 
In particular, we have the following identification 
\begin{align*}
H^*_T(\pt) = \Q[\alpha_1^T,\ldots,\alpha_\rank^T]
\end{align*}
since the set of simple roots $\{\alpha_1,\ldots,\alpha_\rank\}$ forms a $\Q$-basis of $\t^*_{\Q}$.
Let $\rho$ be the character of $S$ defined by the following composition map 
\begin{align*}
S \hookrightarrow T \xrightarrow{\prod_{i=1}^\rank{\alpha_i}} (\C^*)^\rank \ \ \ \textrm{and} \ \ \ \{(c,\ldots,c) \mid c \in \C^* \} \rightarrow \C^*; (c,\ldots,c) \mapsto c.
\end{align*}
Note that the image of the former map above is in the subgroup $\{(c,\ldots,c) \mid c \in \C^* \}$ by the definition of $S$.
Let $t$ be the $S$-equivariant first Chern class of the associated line bundle $\C_\rho$, namely 
\begin{align*}
t \coloneqq c_1^S(\C_\rho) \in H^2_S(\pt).
\end{align*}
With this notation in place we have 
\begin{align*}
H^*_S(\pt) = \Q[t].
\end{align*}
The inclusion $S \hookrightarrow T$ induces a homomorphism $\pi: H^*_T(\pt) \rightarrow H^*_S(\pt)$ and one can easily see from the definition of $\rho$ above that 
\begin{align} \label{eq:alpha to t}
\pi(\alpha_i^T)=t \ \ \ \textrm{for all} \ i \in [\rank].
\end{align}
Recall that the map $H^*_T(\pt) \rightarrow H^*_T(G/B)$ induced from the Borel fibration allows us to an $H^*_T(\pt)$-module for $H^*_T(G/B)$. 
Here we note that the homomorphism $H^*_T(\pt) \rightarrow H^*_T(G/B)$ is injective since $H^{odd}(G/B)=0$. 
One can extend \eqref{eq:homomorphism_flag} to the equivariant version as follows:
\begin{align*} 
\RR[\alpha_1^T,\ldots,\alpha_\rank^T] \rightarrow H^*_T(G/B); \ \ \ \alpha \mapsto c_1^T(L_{\alpha}^*) \ \textrm{for} \ \alpha \in \RR \ \textrm{and} \ \alpha_i^T \mapsto \alpha_i^T, 
\end{align*} 
where $c_1^T(L_{\alpha}^*)$ denotes the $T$-equivariant first Chern class of $L_{\alpha}^*$.
Similarly, we can construct the $\Q[t]$-algebra homomorphism
\begin{align*} 
\RR[t] \rightarrow H^*_S(G/B); \ \ \ \alpha \mapsto c_1^S(L_{\alpha}^*) \ \textrm{and} \ t \mapsto t. 
\end{align*} 
The map above composed with the restriction map $H^*_S(G/B) \mapsto H^*_S(\Pet_\Phi)$ is also the equivariant version of $\varphi$ in \eqref{eq:homomorphism_Pet}.
We denote it by 
\begin{align} \label{eq:equivariant_homomorphism_Pet}
\tilde\varphi: \RR[t] \rightarrow H^*_S(G/B) \rightarrow H^*_S(\Pet_\Phi).
\end{align} 
It follows from \cite[Theorem~3.5]{Dre15} that $\tilde\varphi$ is surjective.

We now explain the fundamental relations in $H^*_S(\Pet_\Phi)$ by \cite{HHM}.
Consider the following commutative diagram:
\begin{equation} \label{eq:CD}
\begin{CD}
H^*_T(G/B)@>{\iota_1}>> H^*_T \big((G/B)^T \big) = \displaystyle \bigoplus_{w\in W} \Q[\alpha_1^T,\dots,\alpha_\rank^T]\\
@V{}VV @VV{}V\\
H^*_S(\Pet_\Phi)@>{\iota_2}>> H^*_S \big((\Pet_\Phi)^S \big) = \displaystyle \bigoplus_{w\in (\Pet_\Phi)^S} \Q[t]
\end{CD}
\end{equation}
where all the maps are induced from the inclusion maps on underlying spaces.
Since $G/B$ and $\Pet_\Phi$ have no odd degree cohomology, both $\iota_1$ and $\iota_2$ are injective. 
Thus, we may regard $H^*_T(G/B)$ and $H^*_S(\Pet_\Phi)$ as a subset of $\bigoplus_{w\in W} \Q[\alpha_1^T,\dots,\alpha_\rank^T]$ and $\bigoplus_{w\in (\Pet_\Phi)^S} \Q[t]$, respectively. 
For $f \in H^*_T(G/B) \subset \bigoplus_{w\in W} \Q[\alpha_1^T,\dots,\alpha_\rank^T]$, the $w$-th component of $f$ is denoted by $f|_w$.
Similarly, the $w$-th component of $f \in H^*_S(\Pet_\Phi) \subset \bigoplus_{w\in (\Pet_\Phi)^S} \Q[t]$ is also denoted by $f|_w$.
The following lemma is straightforward (cf. \cite[Lemma~5.2]{AHMMS}).

\begin{lemma} \label{lemma:c1TLalpha}
For any $\alpha \in \t^*_\Q$, the $w$-th component of $c_1^T(L_\alpha) \in H^2_T(G/B)$ is  
\begin{align*}
c_1^T(L_\alpha)|_w=w(\alpha^T) \ \ \  \textrm{in} \ H^2_T(\pt).
\end{align*}
\end{lemma}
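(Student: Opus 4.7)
The plan is to localize at the fixed point $wB$ and identify the resulting equivariant first Chern class with the $T$-weight of the fiber. The whole argument is a short calculation with the definition of the line bundle $L_\alpha = G \times_B \C_{\tilde\alpha}$, so the work is mainly bookkeeping.

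First, I would use the naturality of $c_1^T$ under the pull-back along the inclusion $\iota_w: \{wB\} \hookrightarrow G/B$, so that
\[
c_1^T(L_\alpha)|_w \; = \; c_1^T\bigl(L_\alpha|_{wB}\bigr) \quad \text{in } H^2_T(\pt).
\]
This reduces the problem to identifying the $T$-weight of the one-dimensional fiber $L_\alpha|_{wB}$, since $c_1^T$ of a one-dimensional $T$-representation $\C_\lambda$ on a point equals $\lambda^T$ by definition of the identification $\t^*_\Q \cong H^2_T(\pt)$.

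Next, I would compute the fiber explicitly. A point of $L_\alpha = G \times_B \C_{\tilde\alpha}$ lying over $wB$ is represented by $[w, z]$ with $z \in \C$, and for $t \in T$ we have
\[
t \cdot [w, z] \; = \; [tw, z] \; = \; [w \cdot (w^{-1} t w),\, z] \; = \; [w,\, \tilde\alpha(w^{-1} t w) z],
\]
using that $w^{-1} t w \in T \subset B$ and the defining $B$-action $b \cdot (g,z) = (gb^{-1}, \tilde\alpha(b) z)$. Hence the $T$-character of the fiber is $t \mapsto \alpha(w^{-1} t w) = (w\alpha)(t)$, i.e. $L_\alpha|_{wB} \cong \C_{w\alpha}$ as $T$-modules. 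Combining with the previous step yields
\[
c_1^T(L_\alpha)|_w \; = \; c_1^T(\C_{w\alpha}) \; = \; (w\alpha)^T \; = \; w(\alpha^T),
\]
which is the desired formula.

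There is no real obstacle here; the only point to be careful about is the direction of the $B$-action (which is a right-action written as an inverse), so that the induced character on the fiber at $wB$ comes out as $w\alpha$ rather than $-w\alpha$ or $w^{-1}\alpha$. Once this is pinned down, the lemma follows immediately, and the argument is exactly the one indicated in \cite[Lemma~5.2]{AHMMS}.
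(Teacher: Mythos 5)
Your proof is correct: the localization-plus-fiber-weight computation is exactly the standard argument, the sign and the direction of the Weyl action both come out right (the fiber character is $t \mapsto \alpha(w^{-1}tw) = (w\alpha)(t)$), and this is consistent with how the lemma is used later in the paper (e.g.\ $c_1^T(L_{\varpi_i}^*)|_{s_i} = -s_i(\varpi_i^T)$). The paper itself omits the proof, calling the lemma straightforward and citing \cite[Lemma~5.2]{AHMMS}, so your write-up simply supplies the intended details.
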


For $w \in W$, let $\sigma_w \in H^*(G/B)$ be the Schubert class, which is the Poincar\'e dual of  the opposite Schubert variety $\Omega_w$ in $G/B$ (see Example~\ref{example:Cohomology1}-(iii) for the definition of $\Omega_w$). 
We also denote by $\tilde\sigma_w \in H^*_T(G/B)$ the corresponding $T$-equivariant Schubert class. 
Let $\tilde p_w \in H^*_S(\Pet_\Phi)$ be the image of the $T$-equivariant Schubert class $\tilde \sigma_w$ under the restriction map $H^*_T(G/B) \rightarrow H^*_S(\Pet_\Phi)$.
Similarly, we denote by $p_w \in H^*(\Pet_\Phi)$ the image of the (ordinary) Schubert class $\sigma_w$ under the (ordinary) restriction map $H^*(G/B) \rightarrow H^*(\Pet_\Phi)$.

\begin{proposition} $($\cite[Proposition~3.4]{HHM}$)$ \label{proposition:HHM}
Let $(c_{ij})_{i,j \in [\rank]}$ be the Cartan matrix.
Then we have 
\begin{align} \label{eq:fundamental relations in HSPet}
\big(\sum_{j=1}^\rank c_{ij} \tilde p_{s_j}-2t \big) \tilde p_{s_i}=0 \ \textrm{in} \ H^*_S(\Pet_\Phi)
\end{align}
for all $i \in [\rank]$.
\end{proposition}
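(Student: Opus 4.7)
The plan is to exploit the injectivity of the localization map $\iota_2 \colon H^*_S(\Pet_\Phi) \hookrightarrow \bigoplus_{w_K \in (\Pet_\Phi)^S} \Q[t]$ from diagram \eqref{eq:CD}, together with the explicit description of the fixed-point set in Lemma~\ref{lemma:PetFixedPoints}. Since both factors in the claimed identity sit in $H^*_S(\Pet_\Phi)$, it suffices to verify, for every subset $K \subset \Delta$,
\[
\Big(\sum_{j=1}^{\rank} c_{ij}\, \tilde p_{s_j}|_{w_K} - 2t\Big)\, \tilde p_{s_i}|_{w_K} = 0 \quad \text{in } \Q[t].
\]

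First I would pin down $\tilde p_{s_i}|_{w_K}$ for each $K$. A standard computation from Lemma~\ref{lemma:c1TLalpha} (normalizing the line bundle class $c_1^T(L_{-\varpi_i})$ to vanish at the identity fixed point) gives $\tilde \sigma_{s_i}|_v = \varpi_i^T - v(\varpi_i^T)$ in $H^2_T(\pt)$ for any $v \in W$. Passing through the commutative square \eqref{eq:CD} and applying the projection $\pi$ which sends each $\alpha_k^T$ to $t$ by \eqref{eq:alpha to t} yields
\[
\tilde p_{s_i}|_{w_K} = \pi\bigl(\varpi_i^T - w_K(\varpi_i^T)\bigr).
\]

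Next I would split into two cases. If $\alpha_i \notin K$, then $w_K$ lies in the parabolic $W_K$ generated by simple reflections $s_k$ with $\alpha_k \in K$, and since $s_k(\varpi_i) = \varpi_i - \delta_{ki}\alpha_k$, every such $s_k$ fixes $\varpi_i$; hence $w_K(\varpi_i) = \varpi_i$ and $\tilde p_{s_i}|_{w_K} = 0$, killing the product. If $\alpha_i \in K$, let $K_\ell$ be the connected component of $K$ containing $\alpha_i$. The longest element $w_0^{K_\ell}$ of the parabolic subgroup $W_{K_\ell}$ sends $\alpha_i$ to $-\alpha_{i^*}$ for some $i^*$ in $K_\ell$ (possibly $i^* = i$), via the opposition involution of the Dynkin diagram $K_\ell$. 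Using the identity $\alpha_i = \sum_{j=1}^{\rank} c_{ij} \varpi_j$ together with the previous formula, I would then compute
\[
\sum_{j=1}^{\rank} c_{ij}\, \tilde p_{s_j}|_{w_K} = \pi\bigl(\alpha_i^T - w_K(\alpha_i^T)\bigr) = \pi\bigl(\alpha_i^T + \alpha_{i^*}^T\bigr) = 2t,
\]
so the first factor vanishes at $w_K$ and the product is zero.

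The main obstacle is the structural observation that $w_0^{K_\ell}$, when restricted to the simple roots of $K_\ell$, is the negative of an involution of those simple roots; once this standard fact about longest elements of Weyl groups of standard parabolics is invoked, the rest of the argument reduces to short polynomial manipulations in $\Q[t]$. Having verified the vanishing at every $S$-fixed point $w_K$, the injectivity of $\iota_2$ upgrades the fixed-point identity to the claimed identity in $H^*_S(\Pet_\Phi)$.
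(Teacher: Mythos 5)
Your proof is correct. Note first that the paper itself does not prove this proposition --- it is quoted verbatim from \cite[Proposition~3.4]{HHM} --- so there is no in-paper argument to compare against; but your localization argument is a complete and correct self-contained proof, and it follows the same strategy as the cited source. The key computations all check out: the formula $\tilde\sigma_{s_i}|_v=\varpi_i^T-v(\varpi_i^T)$ is the standard one for divisor classes (it vanishes at $e$ and restricts to $\delta_{ik}\alpha_k^T$ at $s_k$, which pins it down among degree-two classes); when $\alpha_i\notin K$ every generator of $W_K$ fixes $\varpi_i$, so the second factor dies at $w_K$; and when $\alpha_i\in K$ the identity $\alpha_i=\sum_j c_{ij}\varpi_j$ (consistent with the Cartan-matrix convention used in the paper's Proposition~\ref{proposition:HHM2}) together with $w_K(\alpha_i)=-\alpha_{i^*}$ for some $\alpha_{i^*}\in K$ gives $\sum_j c_{ij}\tilde p_{s_j}|_{w_K}=\pi(\alpha_i^T+\alpha_{i^*}^T)=2t$ by \eqref{eq:alpha to t}, killing the first factor. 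One small point worth making explicit: the factors $w_0^{K_m}$ for components $K_m$ not containing $\alpha_i$ fix $\alpha_i$ because distinct connected components of $K$ are non-adjacent in the Dynkin diagram, so $\langle\alpha_i,\alpha_k^\vee\rangle=0$ for $\alpha_k\in K_m$; with that observed, the injectivity of $\iota_2$ (which holds since $H^{odd}(\Pet_\Phi)=0$) finishes the argument exactly as you say.
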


We remark that the equations \eqref{eq:fundamental relations in HSPet} for all $i \in [n]$ give the fundamental relations in $H^*_S(\Pet_\Phi)$ by \cite[Theorem~4.1]{HHM}.
In order to see generators of the kernel of $\tilde\varphi$ in \eqref{eq:equivariant_homomorphism_Pet}, we need to know an element of $\RR[t]$ whose image under the map $\tilde\varphi$ is $\tilde p_{s_i}$. 
For this purpose, we prepare the following lemma.

\begin{lemma}  \label{lemma:psi_formula}
Let $(d_{ij})_{i,j \in [\rank]}$ be the inverse matrix of the Cartan matrix.
For any $i \in [\rank]$, we have 
\begin{align*}
\tilde p_{s_i} = c_1^S(L_{\varpi_i}^*) + \big( \sum_{j=1}^\rank d_{ij} \big) t \ \ \ \textrm{in} \ H^*_S(\Pet_\Phi).
\end{align*}
\end{lemma}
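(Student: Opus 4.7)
The plan is to lift the claimed identity to $T$-equivariant cohomology by proving
\begin{equation*}
\tilde\sigma_{s_i} = c_1^T(L_{\varpi_i}^*) + \varpi_i^T \quad \text{in } H^*_T(G/B),
\end{equation*}
and then restrict to $H^*_S(\Pet_\Phi)$ through the commutative diagram~\eqref{eq:CD}. Under this restriction map, the class $\tilde\sigma_{s_i}$ goes to $\tilde p_{s_i}$ by definition, $c_1^T(L_{\varpi_i}^*)$ goes to $c_1^S(L_{\varpi_i}^*)$ (using the convention that the same symbol denotes the restriction to $\Pet_\Phi$), and the constant term $\varpi_i^T \in H^*_T(\pt)$ is pushed through the map $\pi: H^*_T(\pt) \to H^*_S(\pt)$ of~\eqref{eq:alpha to t}. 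Since $\varpi_i = \sum_{j=1}^{\rank} d_{ij}\alpha_j$ by definition of the inverse Cartan matrix, we have $\varpi_i^T = \sum_j d_{ij}\alpha_j^T$, and so $\pi(\varpi_i^T) = \bigl(\sum_{j=1}^{\rank} d_{ij}\bigr) t$, yielding precisely the formula in the lemma.

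To verify the lifted identity in $H^*_T(G/B)$, I use the injectivity of the localization map $H^*_T(G/B) \hookrightarrow \bigoplus_{w \in W} H^*_T(\pt)$, which is valid since $H^{\mathrm{odd}}(G/B) = 0$. It therefore suffices to compare localizations at each $T$-fixed point $w \in W$. By Lemma~\ref{lemma:c1TLalpha}, the $w$-th component of $c_1^T(L_{\varpi_i}^*)$ is $-w(\varpi_i^T)$, so the right-hand side localizes at $w$ to $\varpi_i^T - w(\varpi_i^T)$. On the other hand, the $T$-equivariant Schubert class $\tilde\sigma_{s_i}$ dual to the codimension-one opposite Schubert variety $\Omega_{s_i}$ is well-known to satisfy $\tilde\sigma_{s_i}|_w = \varpi_i^T - w(\varpi_i^T)$ for every $w \in W$; this can be read off from the Chevalley formula or verified directly, noting that both expressions vanish at $w = e$ and at $w = s_j$ with $j \neq i$ (since $s_j$ fixes $\varpi_i$) and both equal $\alpha_i^T$ at $w = s_i$, which already pins down a degree-$2$ equivariant class in the GKM picture for longer Weyl group elements by the Chevalley recursion.

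The main obstacle is reconciling the paper's conventions with the standard formula for $\tilde\sigma_{s_i}|_w$. The paper takes $L_\alpha = G \times_B \C_{\tilde\alpha}$ and defines $\sigma_w$ as the Poincar\'e dual of the opposite Schubert variety $\Omega_w$, so one must confirm that, under these choices, the non-equivariant identity $\sigma_{s_i} = c_1(L_{\varpi_i}^*)$ holds; the correction term $\varpi_i^T$ in the equivariant version is then forced by the requirement $\tilde\sigma_{s_i}|_e = 0$ (which holds because $e \notin \Omega_{s_i}$). Once these sign conventions are settled, the localization argument is straightforward and the pushdown via $\pi$ completes the proof.
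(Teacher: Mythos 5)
Your proof is correct, but it takes a genuinely different route from the paper's. You establish the stronger identity $\tilde\sigma_{s_i} = c_1^T(L_{\varpi_i}^*) + \varpi_i^T$ already in $H^*_T(G/B)$ by GKM localization at all $T$-fixed points, using the standard formula $\tilde\sigma_{s_i}|_w = \varpi_i^T - w(\varpi_i^T)$ together with Lemma~\ref{lemma:c1TLalpha}, and then push everything down through the diagram~\eqref{eq:CD}, converting $\varpi_i^T = \sum_j d_{ij}\alpha_j^T$ into $\bigl(\sum_j d_{ij}\bigr)t$ via $\pi$. The paper instead works directly in $H^*_S(\Pet_\Phi)$: it uses the surjectivity of $\tilde\varphi$ to write $\tilde p_{s_i} = \sum_k a_k c_1^S(L_{\varpi_k}^*) + bt$ as an ansatz, pins down the coefficients $a_k$ by passing to ordinary cohomology and invoking the linear independence of $p_{s_1},\ldots,p_{s_n}$ (Drellich's result), and then determines $b$ by restricting to the single fixed point $s_i$. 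Your argument buys independence from Drellich's linear-independence result and from the surjectivity of $\tilde\varphi$, at the cost of relying on the full localization formula for divisor Schubert classes rather than just its value at $s_i$ (the only input the paper cites from \cite{Bil}). One small point worth tightening: your claim that the values at $e$ and at the simple reflections ``pin down'' a degree-two class should be justified by noting that $H_T^2(G/B)$ is spanned by $H_T^2(\pt)$ together with the classes $\tilde\sigma_{s_j}$, so that evaluation at $e$ and at each $s_j$ determines any element of that graded piece; alternatively, simply cite the equivariant Chevalley/Billey formula for $\tilde\sigma_{s_i}|_w$ outright, as it is standard. Your attention to the sign conventions (dual line bundle, opposite Schubert varieties) is appropriate and your resolution of them is consistent with the paper's.
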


\begin{proof}
By the surjectivity of $\tilde\varphi$ in \eqref{eq:equivariant_homomorphism_Pet}, we can write
\begin{align*}
\tilde p_{s_i} = \sum_{k=1}^\rank a_k c_1^S(L_{\varpi_k}^*) + b \, t \ \ \ \textrm{in} \ H^*_S(\Pet_\Phi)
\end{align*}
for some $a_1, \ldots, a_\rank, b \in \Q$.
Taking the image of the both sides above under the forgetful map $ H^*_S(\Pet_\Phi) \rightarrow H^*(\Pet_\Phi)$, we have
\begin{align} \label{eq:proof psi1}
p_{s_i} = \sum_{k=1}^\rank a_k c_1(L_{\varpi_k}^*) \ \ \ \textrm{in} \ H^*(\Pet_\Phi).
\end{align}
It is known that $\sigma_{s_k}=c_1(L_{\varpi_k}^*)$ in $H^*(G/B)$ (cf. \cite{BGG}), so one has $p_{s_k}=c_1(L_{\varpi_k}^*)$ in $H^*(\Pet_\Phi)$ which implies that $p_{s_i} = \sum_{k=1}^\rank a_k p_{s_k}$ in $H^*(\Pet_\Phi)$. 
Since $p_{s_1}, \ldots, p_{s_\rank}$ are linearly independent by \cite[Theorem~3.5]{Dre15}, we have $a_i=1$ and $a_k=0$ if $k \neq i$. 
Thus, \eqref{eq:proof psi1} is written as 
\begin{align*} 
\tilde p_{s_i} = c_1^S(L_{\varpi_i}^*) + b \, t \ \ \ \textrm{in} \ H^*_S(\Pet_\Phi).
\end{align*}
Recall that the simple reflection $s_i$ is an $S$-fixed point of $\Pet_\Phi$ from Lemma~\ref{lemma:PetFixedPoints}.
Comparing the $s_i$-th component of both sides, one has  
\begin{align} \label{eq:proof psi2}
\tilde p_{s_i}|_{s_i} = c_1^S(L_{\varpi_i}^*)|_{s_i} + b \, t \ \ \ \textrm{in} \ H^*_S(\pt)=\Q[t].
\end{align}
As is well-known, $\tilde \sigma_{s_i}|_{s_i}=\alpha_i^T$ in $H^*_T(\pt)$ (cf. \cite{Bil}).
Thus, the left hand side of \eqref{eq:proof psi2} is equal to $\tilde p_{s_i}|_{s_i}=t$ by \eqref{eq:alpha to t}. 
On the other hand, Lemma~\ref{lemma:c1TLalpha} leads us to 
\begin{align*}
c_1^T(L_{\varpi_i}^*)|_{s_i}=-s_i(\varpi_i^T)=-(\varpi_i^T-\alpha_i^T)=\alpha_i^T- \sum_{j=1}^\rank d_{ij} \alpha_j^T \ \ \ \textrm{in} \ H^2_T(\pt).
\end{align*}
From this and \eqref{eq:alpha to t} again, we write \eqref{eq:proof psi2} as 
\begin{align*}
t=t - \big( \sum_{j=1}^\rank d_{ij} \big) t + b \, t \ \ \ \textrm{in} \ \Q[t].
\end{align*}
Hence, we obtain $b=\sum_{j=1}^\rank d_{ij}$ as desired.
\end{proof}

The following proposition is a translation of Proposition~\ref{proposition:HHM} by Lemma~\ref{lemma:psi_formula}.
We leave the proof of the proposition below to the reader.

\begin{proposition} \label{proposition:HHM2}
It holds that 
\begin{align*}
\big(c_1^S(L_{\alpha_i}^*) - t \big) \big(c_1^S(L_{\varpi_i}^*)+( \sum_{j=1}^\rank d_{ij}) t \big)=\big(\sum_{j=1}^\rank c_{ij} \tilde p_{s_j}-2t \big) \tilde p_{s_i}=0 \ \ \ \textrm{in} \ H^*_S(\Pet_\Phi)
\end{align*}
for all $i \in [\rank]$ where $(d_{ij})_{i,j \in [\rank]}$ is the inverse matrix of the Cartan matrix $(c_{ij})_{i,j \in [\rank]}$.
\end{proposition}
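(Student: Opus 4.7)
The proposition breaks into two independent equalities: the second equality $(\sum_{j=1}^\rank c_{ij}\tilde p_{s_j}-2t)\tilde p_{s_i}=0$ is exactly Proposition~\ref{proposition:HHM} of \cite{HHM}, so nothing new has to be proved there. The actual content is the first equality, which simply rewrites the two factors of that relation in terms of the line-bundle generators $c_1^S(L_\alpha^*)$ and $t$ so that the generators lie in the image of $\tilde\varphi$ from \eqref{eq:equivariant_homomorphism_Pet}. Hence the plan is just a direct algebraic substitution using Lemma~\ref{lemma:psi_formula}.

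First, the second factor is immediate: Lemma~\ref{lemma:psi_formula} gives
$$\tilde p_{s_i} = c_1^S(L_{\varpi_i}^*) + \Bigl(\sum_{j=1}^\rank d_{ij}\Bigr) t.$$
For the first factor I would plug Lemma~\ref{lemma:psi_formula} into $\sum_{j=1}^\rank c_{ij}\tilde p_{s_j}$ and use the additivity of the assignment $\alpha\mapsto c_1^S(L_\alpha^*)$ to rewrite it as
$$\sum_{j=1}^\rank c_{ij}\tilde p_{s_j} = c_1^S\bigl(L_{\sum_{j}c_{ij}\varpi_j}^*\bigr) + \sum_{k=1}^\rank\Bigl(\sum_{j=1}^\rank c_{ij}d_{jk}\Bigr)t.$$
Now apply the two standard linear-algebraic identities: $\sum_{j}c_{ij}\varpi_j=\alpha_i$, which is the defining relation between the Cartan matrix, the simple roots, and the fundamental weights, and $\sum_{j}c_{ij}d_{jk}=\delta_{ik}$, which holds by the definition of $(d_{ij})$ as the inverse of $(c_{ij})$. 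These collapse the right-hand side to $c_1^S(L_{\alpha_i}^*)+t$, and subtracting $2t$ produces $c_1^S(L_{\alpha_i}^*)-t$, matching the first factor in the statement.

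Multiplying the two rewritten factors gives the claimed equality of the two products in $H^*_S(\Pet_\Phi)$, after which Proposition~\ref{proposition:HHM} closes the argument by yielding that they both vanish. I do not expect any genuine obstacle here; the only thing to be careful about is the bookkeeping with the Cartan matrix and its inverse, which is handled by the two identities above. The real purpose of this reformulation is forward-looking: the expressions $c_1^S(L_{\alpha_i}^*)-t$ and $c_1^S(L_{\varpi_i}^*)+(\sum_j d_{ij})t$ lift manifestly to the polynomial ring $\RR[t]$, so Proposition~\ref{proposition:HHM2} exhibits explicit elements of $\ker\tilde\varphi$ which will serve as the starting point for identifying the kernel of $\varphi_I^J$ in the proof of Theorem~\ref{theorem:Cohomology1}.
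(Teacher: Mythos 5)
Your proposal is correct and follows essentially the same route as the paper: the second equality is quoted from Proposition~\ref{proposition:HHM}, and the first is obtained by substituting Lemma~\ref{lemma:psi_formula} into $\sum_{j} c_{ij}\tilde p_{s_j}$ and collapsing via $\sum_j c_{ij}\varpi_j=\alpha_i$ and $\sum_j c_{ij}d_{jk}=\delta_{ik}$, exactly as in the paper's computation.
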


By Proposition~\ref{proposition:HHM2} we can rewrite \cite[Theorem~4.1]{HHM} as follows.

\begin{theorem} \label{theorem:EquivariantCohomologyPet}
The kernel of the map $\tilde\varphi$ in \eqref{eq:equivariant_homomorphism_Pet} is the ideal generated by 
\begin{align*}
\big( \alpha_i - t \big) \big(\varpi_i+( \sum_{j=1}^\rank d_{ij}) t \big)
\end{align*}
for all $i \in [\rank]$.
Here, $(d_{ij})_{i,j \in [\rank]}$ denotes the inverse matrix of the Cartan matrix.
In particular, we obtain 
\begin{align*}
H^*_S(\Pet_\Phi) \cong \RR[t]/\big( (\alpha_i - t)(\varpi_i+(\sum_{j=1}^\rank d_{ij}) t) \mid i \in [\rank] \big).
\end{align*}
\end{theorem}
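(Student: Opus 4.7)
The plan is to obtain this presentation by a change of variables from the HHM presentation, which is stated using the abstract generators $\tilde p_{s_i}$, to the $\RR[t]$-presentation coming from $\tilde\varphi$. The bridge is Lemma~\ref{lemma:psi_formula}, which expresses each $\tilde p_{s_i}$ as $\tilde\varphi\bigl(\varpi_i + (\sum_j d_{ij})t\bigr)$.

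First, I would invoke the HHM equivariant presentation (which is \cite[Theorem~4.1]{HHM}, the main tool behind the non-equivariant Theorem~\ref{theorem:CohomologyPet}): the $\Q[t]$-algebra surjection $\Q[t][x_1,\ldots,x_\rank] \to H^*_S(\Pet_\Phi)$, $x_i \mapsto \tilde p_{s_i}$, has kernel the ideal generated by the elements $(\sum_j c_{ij}\, x_j - 2t)\, x_i$ for $i \in [\rank]$, in accordance with Proposition~\ref{proposition:HHM}.

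Next, I would identify this HHM presentation with $\tilde\varphi : \RR[t] \to H^*_S(\Pet_\Phi)$ via the $\Q[t]$-algebra isomorphism $\Q[t][x_1,\ldots,x_\rank] \xrightarrow{\sim} \RR[t]$, $x_i \mapsto \varpi_i + (\sum_j d_{ij})t$. This map is an isomorphism because $\{\varpi_1,\ldots,\varpi_\rank\}$ is a $\Q$-basis of $\t^*_\Q$, so the elements $\varpi_i + (\sum_j d_{ij})t$ together with $t$ constitute a free polynomial generating set of $\RR[t]$ over $\Q$. By Lemma~\ref{lemma:psi_formula}, composing this isomorphism with $\tilde\varphi$ sends $x_i$ to $\tilde p_{s_i}$, so it coincides with the HHM surjection. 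Hence $\ker\tilde\varphi$ is the image of the HHM kernel, namely the ideal generated in $\RR[t]$ by the elements $\bigl(\sum_j c_{ij}(\varpi_j + (\sum_k d_{jk})t) - 2t\bigr)\bigl(\varpi_i + (\sum_j d_{ij})t\bigr)$.

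Finally, I would simplify the first factor using $\sum_j c_{ij}\varpi_j = \alpha_i$ and $\sum_j c_{ij} d_{jk} = \delta_{ik}$:
\[
\sum_j c_{ij}\bigl(\varpi_j + (\sum_k d_{jk})t\bigr) - 2t = \alpha_i + t - 2t = \alpha_i - t,
\]
yielding the claimed generator $(\alpha_i - t)\bigl(\varpi_i + (\sum_j d_{ij})t\bigr)$. This is precisely the identity of Proposition~\ref{proposition:HHM2} read as a statement about generators in $\RR[t]$ rather than about zero elements in $H^*_S(\Pet_\Phi)$. The only subtle point, really, is confirming that the HHM presentation is a genuine free-polynomial presentation before taking the quotient; once that is recorded, the theorem reduces to the pure algebraic translation above.
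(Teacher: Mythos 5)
Your proposal is correct and follows essentially the same route as the paper: the paper derives Lemma~\ref{lemma:psi_formula} and Proposition~\ref{proposition:HHM2} precisely so that the theorem becomes a rewriting of the HHM presentation \cite[Theorem~4.1]{HHM} under the change of variables $x_i \mapsto \varpi_i + (\sum_j d_{ij})t$. Your explicit verification that this substitution is a $\Q[t]$-algebra automorphism of $\RR[t]$ carrying the HHM generators to $(\alpha_i - t)(\varpi_i + (\sum_j d_{ij})t)$ is exactly the content the paper compresses into the sentence preceding the theorem.
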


We now see relations in $H^*_S(Y_I^J)$ where $Y_I^J$ is a subvariety of $\Pet_\Phi$ in Theorem~\ref{theorem:Cohomology1}.
In what follows, by abuse of notation again we denote by the same symbol $L_{\alpha}$ the restriction $L_{\alpha}|_{Y_I^J}$ of the line bundle $L_{\alpha}=L_{\alpha}|_{\Pet_\Phi}$ over $\Pet_\Phi$.

\begin{proposition} \label{proposition:relationsYIJ}
Let $Y_I^J$ be a subvariety of the Peterson variety $\Pet_\Phi$ satisfying the  conditions~(1) and~(2) in Theorem~\ref{theorem:Cohomology1}.
Then, we obtain
\begin{align*}
&\big(c_1^S(L_{\alpha_k}^*) - t \big) \big(c_1^S(L_{\varpi_k}^*)+( \sum_{\ell=1}^\rank d_{k\ell}) t \big)=0 \ \ \ \textrm{for} \ k \in [\rank] \ \textrm{with} \ \alpha_k \in I \setminus J; \\
&c_1^S(L_{\alpha_j}^*) - t=0  \hspace{135pt} \textrm{for} \ j \in [\rank] \ \textrm{with} \ \alpha_j \in J; \\
&c_1^S(L_{\varpi_i}^*)+( \sum_{j=1}^\rank d_{ij}) t=0 \hspace{90pt} \textrm{for} \ i \in [\rank] \ \textrm{with} \ \alpha_i \in \Delta \setminus I 
\end{align*}
in $H^*_S(Y_I^J)$.
\end{proposition}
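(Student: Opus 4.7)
The plan is to exploit the injectivity of the localization map
\[
H^*_S(Y_I^J) \hookrightarrow \bigoplus_{w_K \in (Y_I^J)^S} H^*_S(\pt) = \bigoplus_{J \subset K \subset I} \Q[t]
\]
promised by Remark~\ref{remark:YIJ odd degree}: this reduces the verification of each of the three claimed relations to checking its vanishing at every fixed point $w_K$ with $J \subset K \subset I$.

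For the first family of relations, nothing new needs to be done. Since $Y_I^J \subset \Pet_\Phi$, the restriction map $H^*_S(\Pet_\Phi) \to H^*_S(Y_I^J)$ is a ring homomorphism carrying $c_1^S(L_\alpha^*)$ on $\Pet_\Phi$ to its analogue on $Y_I^J$, so the identity
\[
\bigl(c_1^S(L_{\alpha_k}^*) - t \bigr)\bigl(c_1^S(L_{\varpi_k}^*)+(\textstyle\sum_{\ell} d_{k\ell}) t\bigr)=0
\]
established in Proposition~\ref{proposition:HHM2} transports verbatim for all $k \in [\rank]$, hence in particular for $\alpha_k \in I \setminus J$.

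For the remaining two families I would argue by direct fixed-point calculation. Combining Lemma~\ref{lemma:c1TLalpha} with the identification \eqref{eq:alpha to t}, the restriction at $w_K$ of $c_1^S(L_\alpha^*)$ equals $\pi(-w_K(\alpha^T))$. If $\alpha_j \in J$, then $\alpha_j \in K$, and the longest element $w_K$ of $W_K$ sends $\alpha_j$ to the negative of another simple root of $K$ (via the diagram involution $\tau$ of $K$ induced by $w_K$); thus $w_K(\alpha_j^T)=-\alpha_{\tau(j)}^T$, and applying $\pi$ gives $c_1^S(L_{\alpha_j}^*)|_{w_K}=t$, which yields the second relation. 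Dually, if $\alpha_i \in \Delta \setminus I$ then $\alpha_i \notin K$, and $w_K \in W_K$ fixes the fundamental weight $\varpi_i$; expanding $\varpi_i = \sum_j d_{ij}\alpha_j$ via the inverse Cartan matrix and applying $\pi$ then gives $c_1^S(L_{\varpi_i}^*)|_{w_K} = -(\sum_j d_{ij})t$, which exactly cancels the correction term, yielding the third relation.

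The only step where care is actually required is the injectivity of the localization at the outset; this genuinely uses both hypotheses on $Y_I^J$, with condition~(2) supplying the vanishing of odd cohomology (equivariant formality) and condition~(1) identifying the target of the localization. Once that injection is available, everything reduces to the two familiar facts about $w_K$ recalled above, namely that it acts by $-1$ on the simple roots inside $K$ (up to a diagram involution) and fixes the fundamental weights indexed outside $K$.
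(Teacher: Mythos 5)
Your argument is correct, and for the first two families of relations it coincides with the paper's proof: the quadratic relations are pulled back from $H^*_S(\Pet_\Phi)$ via Proposition~\ref{proposition:HHM2}, and the relations $c_1^S(L_{\alpha_j}^*)=t$ for $\alpha_j\in J$ are checked at each fixed point $w_K$ using Lemma~\ref{lemma:c1TLalpha} together with the fact that $w_K$ sends the simple roots of $K$ to negatives of simple roots of $K$. Where you genuinely diverge is the third family. The paper reduces $c_1^S(L_{\varpi_i}^*)+(\sum_j d_{ij})t=0$ to the vanishing of the restricted Peterson Schubert class $\tilde p_{s_i}$ via Lemma~\ref{lemma:psi_formula}, and then kills $\tilde p_{s_i}|_{w_K}$ by the upper-triangularity of equivariant Schubert classes ($\tilde\sigma_{s_i}|_{w_K}=0$ since $\alpha_i\notin K$ forces $s_i\not\leq w_K$). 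You instead compute $c_1^S(L_{\varpi_i}^*)|_{w_K}$ directly: since $\alpha_i\notin K$, every generator $s_k$ of $W_K$ fixes $\varpi_i$, so $w_K(\varpi_i)=\varpi_i$, and expanding $\varpi_i=\sum_j d_{ij}\alpha_j$ and applying $\pi$ gives exactly $-(\sum_j d_{ij})t$. Both computations are valid; yours is more elementary and self-contained (no appeal to Schubert-class localization formulas beyond Lemma~\ref{lemma:c1TLalpha}), while the paper's makes visible the conceptually useful fact that the third relation is precisely the vanishing of $\tilde p_{s_i}$ on $Y_I^J$, which ties into how the relations are organized via Proposition~\ref{proposition:HHM}. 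Your framing of where the hypotheses enter --- condition~(2) for equivariant formality and hence injectivity of localization, condition~(1) for the identification of the fixed-point set --- matches the paper's use of Remark~\ref{remark:YIJ odd degree}.
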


\begin{proof}
By the restriction map $H^*_S(\Pet_\Phi) \rightarrow H^*_S(Y_I^J)$, the first relation follows from Proposition~\ref{proposition:HHM2} by restricting to $H^*_S(Y_I^J)$. 
We show the second and the third relations.
As explained in Remark~\ref{remark:YIJ odd degree}, we use the injectivity of the restriction map 
\begin{align*} 
H_S^*(Y_I^J) \hookrightarrow H_S^*\big((Y_I^J)^S\big)=\bigoplus_{w \in (Y_I^J)^S} H_S^*(\pt). 
\end{align*} 
In order to prove the second relation, we take a simple root $\alpha_j \in J$.
Then it suffices to show that 
\begin{align} \label{eq:proof lemma second relation}
c_1^S(L_{\alpha_j}^*)|_{w_K} =t  \ \ \ \textrm{for all} \ K \subset \Delta \ \textrm{with} \ J \subset K \subset I 
\end{align}
by the condition~(1) for $Y_I^J$.
By Lemma~\ref{lemma:c1TLalpha} one has
\begin{align*} 
c_1^T(L_{\alpha_j}^*)|_{w_K}=-w_K(\alpha_j^T) \ \ \ \textrm{in} \ H^2_T(\pt).
\end{align*}
Since $\alpha_j \in J$ and $J \subset K$, we have $\alpha_j \in K$.
Noting that $w_K$ is the longest element associated with the simple system $K$, one see that $w_K(K)=-K\coloneqq\{-\alpha_k \mid \alpha_k \in K \}$ in $\t^*_\Q$. 
This implies that $c_1^T(L_{\alpha_j}^*)|_{w_K}=\alpha_k^T$ in $H^2_T(\pt)$ for some $\alpha_k \in K$, which yields \eqref{eq:proof lemma second relation} from \eqref{eq:alpha to t}.

Finally, we show the third relation. 
To do that, it is enough to prove that for $i \in [\rank]$ with $\alpha_i \in \Delta \setminus I$, the image of $\tilde p_{s_i}$ under the restriction map $H^*_S(\Pet_\Phi) \rightarrow H^*_S(Y_I^J)$ vanishes by Lemma~\ref{lemma:psi_formula}. 
In other words, it suffices to show that
\begin{align} \label{eq:proof lemma third relation}
\tilde p_{s_i}|_{w_K} =0  \ \ \ \textrm{for all} \ K \subset \Delta \ \textrm{with} \ J \subset K \subset I 
\end{align}
where we denote by the same symbol $\tilde p_{s_i}$ the image of $\tilde p_{s_i}$ under the restriction map $H^*_S(\Pet_\Phi) \rightarrow H^*_S(Y_I^J)$.
Since $\alpha_i \notin I$ and $K \subset I$, one has $\alpha_i \notin K$ which implies that $s_i \not\leq w_K$ in Bruhat order.
As is well-known, the $T$-equivariant Schubert classes $\tilde\sigma_w$ satisfy an upper-triangularity property which means that $\tilde\sigma_w|_v=0$ if $v \not\geq w$ (cf. \cite{Bil}).
Thus, we have $\tilde\sigma_{s_i}|_{w_K}=0$ which implies \eqref{eq:proof lemma third relation} as desired.
\end{proof}

\bigskip

\section{Proof of Theorem~\ref{theorem:Cohomology1}} \label{section:Proof}

In this section we devote to give a proof of Theorem~\ref{theorem:Cohomology1}. 
An idea of the proof is based on \cite{AHHM}.
Throughout this section, $Y_I^J$ denotes a subvariety of the Peterson variety $\Pet_\Phi$ satisfying the conditions~(1) and~(2) in Theorem~\ref{theorem:Cohomology1}.

As usual, we consider the map $\tilde\varphi$ in \eqref{eq:equivariant_homomorphism_Pet} composed with the restriction map $H^*_S(\Pet_\Phi) \rightarrow H^*_S(Y_I^J)$ as follows:
\begin{align} \label{eq:tilde phiIJ}
\tilde \varphi_I^J: \RR[t] \xrightarrow{\tilde\varphi} H^*_S(\Pet_\Phi) \rightarrow H^*_S(Y_I^J).
\end{align} 
Define $\tilde \L_I^J$ by the ideal of $\RR[t]$ generated by the following three types of elements:
\begin{align}
&\theta_k \coloneqq \big( \alpha_k -t \big) \big( \varpi_k + (\sum_{\ell=1}^\rank d_{k\ell}) t \big) \ \ \ \textrm{for} \ k \in [\rank] \ \textrm{with} \ \alpha_k \in I \setminus J; \label{eq:theta} \\
&\xi_j \coloneqq \alpha_j -t  \ \ \ \ \ \ \textrm{for} \ j \in [\rank] \ \textrm{with} \ \alpha_j \in J; \label{eq:xi} \\
&\nu_i \coloneqq \varpi_i + (\sum_{j=1}^\rank d_{ij}) t \ \ \ \ \ \ \textrm{for} \ i \in [\rank] \ \textrm{with} \ \alpha_i \in \Delta \setminus I, \label{eq:nu}
\end{align}
where $(d_{ij})_{i,j \in [\rank]}$ is the inverse matrix of the Cartan matrix. 
Then, by Proposition~\ref{proposition:relationsYIJ}
the map $\tilde \varphi_I^J$ in \eqref{eq:tilde phiIJ} induces the following map
\begin{align} \label{eq:psiIJ}
\psi_I^J: \RR[t]/\tilde \L_I^J \rightarrow H^*_S(Y_I^J).
\end{align}
We will show that $\psi_I^J$ is an isomorphism.
To do that, we first compare the Hilbert series of the both sides.  
Since the odd degree cohomology groups of $Y_I^J$ vanish (Remark~\ref{remark:YIJ odd degree}), there exists an isomorpshism as $H^*_S(\pt)$-modules 
\begin{align} \label{eq:free_HS(YIJ)}
H^*_S(Y_I^J) \cong H^*_S(\pt) \otimes H^*(Y_I^J) = \Q[t] \otimes H^*(Y_I^J).
\end{align}
The isomorphism above yields the Hilbert series of $H^*_S(Y_I^J)$ as follows:
\begin{align} \label{eq:HilbertSeriesHSYIJ}
F(H^*_S(Y_I^J),q)=\frac{1}{1-q^2} \Poin(Y_I^J,q)=\frac{(1+q^2)^{|I|-|J|}}{1-q^2}.
\end{align}
The last equality follows from the condition~(2) for $Y_I^J$.
Next we compute the Hilbert series of $\RR[t]/\tilde \L_I^J$ by using commutative algebra tools. 

\begin{definition}\label{definition:regular sequence}
Let $R$ be a graded commutative algebra over $\Q$ and $\theta_1,\dots,\theta_\m$ are positive-degree homogeneous elements. 
Then, $\theta_1,\dots,\theta_\m$ is a \emph{regular sequence} if the equivalence class of $\theta_i$ is a non-zero-divisor in the quotient ring $R/(\theta_1,\dots,\theta_{i-1})$ for each $1 \leq i \leq \m$.  
\end{definition}

The following properties are well-known and useful in our setting. We refer the reader to \cite{Sta96}. (See also \cite[Proposition~5.1]{FHM} and \cite[Section~6]{AHHM}.)

\begin{lemma} \label{lemma:commutaive algebra}
Let $R=\bigoplus_{i=0}^{\infty} R_i$ be a graded $\Q$-algebra where each $R_i$ is a finite-dimensional $\Q$-vector space and $\theta_1,\dots,\theta_\m$ are positive-degree homogeneous elements in $R$. Then the followings hold:
\begin{enumerate}
\item[(i)] A sequence
$\theta_1,\dots,\theta_\m$ is a regular sequence in $R$ if and only if the Hilbert series $F(R/(\theta_1,\dots,\theta_\m),q)$ and $F(R,q)$ satisfy the following relation:
\begin{equation*} 
F(R/(\theta_1,\dots,\theta_\m),q)=F(R,q)\prod_{i=1}^\m(1-q^{\deg{\theta_i}}). 
\end{equation*}
\item[(ii)] A sequence $\theta_1,\dots,\theta_\m$ is a regular sequence in $R$ if and only if $\theta_1,\dots,\theta_\m$ is algebraically independent over $\Q$ and $R$ is a free $\Q[\theta_1,\dots,\theta_\m]$-module.
\item[(iii)] If $R$ is a polynomial ring $\Q[x_1, \dots , x_\rank]$, then $\theta_1,\dots,\theta_\rank$ is a regular sequence if and only if the solution set of the equations $\theta_1=0,\dots,\theta_\rank=0$ in $\C^\rank$ consists only of the
origin $\{0\}$. 
(Note that the number of elements $\theta_1,\dots,\theta_\rank$ is equal to the number of variables $x_1, \dots , x_\rank$.)
\end{enumerate}
\end{lemma}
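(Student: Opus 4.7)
The plan is to prove each of the three parts of the lemma, which are standard results in graded commutative algebra collected in \cite{Sta96}. The key techniques will be short exact sequences of graded modules, Hilbert series manipulation, and the graded Nullstellensatz.

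For part (i), I would induct on $\m$. For the base case $\m=1$, observe that a positive-degree homogeneous element $\theta_1$ is a non-zero-divisor in $R$ if and only if the multiplication map $\theta_1:R(-\deg\theta_1)\to R$ is injective; equivalently, the sequence
\begin{equation*}
0\to R(-\deg\theta_1)\xrightarrow{\theta_1} R\to R/(\theta_1)\to 0
\end{equation*}
is short exact. Comparing Hilbert series term by term gives $F(R/(\theta_1),q)=F(R,q)(1-q^{\deg\theta_1})$, and the converse direction follows from a graded-dimension count showing that the kernel of multiplication by $\theta_1$ must vanish in every degree. For the inductive step, I apply the $\m=1$ case to the quotient $R/(\theta_1,\dots,\theta_{\m-1})$ together with $\theta_\m$, combining with the inductive hypothesis to produce the factor $\prod_{i=1}^{\m}(1-q^{\deg\theta_i})$.

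For part (ii), the ``$\Leftarrow$'' direction is direct: freeness of $R$ over $\Q[\theta_1,\dots,\theta_\m]$ forces each quotient $R/(\theta_1,\dots,\theta_{i-1})$ to be free over $\Q[\theta_i,\dots,\theta_\m]$, and $\theta_i$-multiplication on such a free module is injective component-wise, while algebraic independence ensures that $\Q[\theta_1,\dots,\theta_\m]$ is a genuine polynomial subring. For ``$\Rightarrow$'', I would invoke the classical fact that the Koszul complex on a regular sequence of positive-degree homogeneous elements in a graded $\Q$-algebra is acyclic, yielding a free resolution of $R/(\theta_1,\dots,\theta_\m)$; from this, freeness of $R$ as a $\Q[\theta_1,\dots,\theta_\m]$-module reads off, and algebraic independence is then automatic since any polynomial relation among the $\theta_i$ would produce a zero-divisor. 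For part (iii), I combine (i) with the graded Nullstellensatz: in $R=\Q[x_1,\dots,x_\rank]$, the common complex zero set of positive-degree homogeneous $\theta_1,\dots,\theta_\rank$ being $\{0\}$ is equivalent (by Nullstellensatz and homogeneity) to $(\theta_1,\dots,\theta_\rank)$ containing a power of each $x_i$, which is equivalent to $R/(\theta_1,\dots,\theta_\rank)$ being finite-dimensional over $\Q$. Since $R$ is Cohen--Macaulay of Krull dimension $\rank$, any $\rank$ positive-degree homogeneous elements whose quotient is finite-dimensional form a system of parameters and hence a regular sequence; conversely, a regular sequence of length $\rank$ yields a finite-dimensional quotient via the Hilbert series formula from (i).

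The hardest step will be the Koszul-complex acyclicity underpinning part (ii), the classical ingredient linking regular sequences to freeness over the subring they generate. Once this is available, part (i) reduces to a short induction and part (iii) to an application of the graded Nullstellensatz together with Cohen--Macaulayness of the polynomial ring; in practice all three statements will be quoted directly from \cite{Sta96} (compare also \cite[Proposition~5.1]{FHM} and \cite[Section~6]{AHHM}).
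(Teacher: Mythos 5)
The paper offers no proof of this lemma: it is quoted as well known from \cite{Sta96}, with pointers to \cite[Proposition~5.1]{FHM} and \cite[Section~6]{AHHM}. Your outline of the standard arguments (the short exact sequence and coefficientwise Hilbert-series comparison for (i), freeness over the polynomial subring $\Q[\theta_1,\dots,\theta_\m]$ for (ii), and the graded Nullstellensatz together with Cohen--Macaulayness of $\Q[x_1,\dots,x_\rank]$ for (iii)) is correct and, since you too ultimately defer to \cite{Sta96}, is consistent with the paper's treatment.
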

 
\begin{lemma} \label{lemma:regular sequence tildeLIJ}
The set of sequences $\{\theta_k \mid k \in [\rank] \ \textrm{with} \ \alpha_k \in I \setminus J \} \cup \{\xi_j \mid  j \in [\rank] \ \textrm{with} \ \alpha_j \in J \} \cup \{\nu_i \mid i \in [\rank] \ \textrm{with} \ \alpha_i \in \Delta \setminus I \} \cup \{t \}$ defined in \eqref{eq:theta}, \eqref{eq:xi}, and \eqref{eq:nu} is a regular sequence in $\RR[t]$.
\end{lemma}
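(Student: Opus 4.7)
The plan is to apply Lemma~\ref{lemma:commutaive algebra}(iii). The total number of elements in the claimed regular sequence is $|I\setminus J|+|J|+|\Delta\setminus I|+1 = \rank+1$, which matches the number of variables of the polynomial ring $\RR[t]=\Q[\alpha_1,\dots,\alpha_\rank,t]$. Thus it suffices to show that the common zero set in $\C^{\rank+1}$ of the polynomials $t$, $\xi_j$ ($\alpha_j\in J$), $\nu_i$ ($\alpha_i\in\Delta\setminus I$), and $\theta_k$ ($\alpha_k\in I\setminus J$) consists only of the origin. The inclusion of $t$ itself in the sequence makes this a clean reduction.

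First I would substitute $t=0$ throughout. Then $\xi_j$ becomes $\alpha_j=0$ (for $\alpha_j\in J$), $\nu_i$ becomes $\varpi_i=0$ (for $\alpha_i\in\Delta\setminus I$), and $\theta_k$ becomes $\alpha_k\varpi_k=0$ (for $\alpha_k\in I\setminus J$). I need to show that any $(\alpha_1,\dots,\alpha_\rank)\in\C^\rank$ satisfying these equations must vanish. Consider the support set $C\coloneqq\{k\in[\rank]\mid\alpha_k\neq 0\}$. Since $\alpha_j=0$ for $\alpha_j\in J$, one has $C\cap\{k\mid\alpha_k\in J\}=\emptyset$. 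For any $k\in C$, either $\alpha_k\in\Delta\setminus I$ (so $\varpi_k=0$ directly) or $\alpha_k\in I\setminus J$ (so from $\alpha_k\varpi_k=0$ and $\alpha_k\neq 0$ we also get $\varpi_k=0$). In either case $\varpi_k=0$ for every $k\in C$. Writing $\varpi_k=\sum_\ell d_{k\ell}\alpha_\ell=\sum_{\ell\in C}d_{k\ell}\alpha_\ell$ (as $\alpha_\ell=0$ for $\ell\notin C$), these equations form a homogeneous linear system on $\{\alpha_\ell\mid\ell\in C\}$ whose coefficient matrix is the principal submatrix $D_C=(d_{k\ell})_{k,\ell\in C}$ of the inverse Cartan matrix.

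The main technical point is then that every principal submatrix of the inverse Cartan matrix is invertible. To see this, recall that the $W$-invariant symmetric bilinear form $(\cdot,\cdot)$ on $\t^*_\R$ is positive definite (since $\Phi$ is of finite type), so the Gram matrix $M_{ij}=(\varpi_i,\varpi_j)$ is symmetric positive definite, and therefore all its principal minors are positive. A short computation using $(\varpi_i,\alpha_j^\vee)=\delta_{ij}$ gives $M_{ij}=d_{ji}|\alpha_i|^2/2$, so $M=D^{T}\cdot\mathrm{diag}(|\alpha_i|^2/2)$; restricting to any subset $C$ of indices preserves this identity, and hence $\det D_C\neq 0$. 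Consequently the only solution of the linear system is $\alpha_\ell=0$ for all $\ell\in C$, forcing $C=\emptyset$, as required.

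The hardest step is the invertibility of arbitrary principal submatrices of the inverse Cartan matrix; everything else is bookkeeping. Once that is in hand, the regular-sequence claim follows from Lemma~\ref{lemma:commutaive algebra}(iii).
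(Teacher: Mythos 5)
Your proof is correct, but it follows a genuinely different route from the paper's. Both arguments begin the same way, invoking Lemma~\ref{lemma:commutaive algebra}-(iii) to reduce the claim to showing that the common zero locus of the $\rank+1$ polynomials in $\C^{\rank+1}$ is the origin. At that point the paper observes that the given equations force $\theta_k=0$ for \emph{all} $k\in[\rank]$ together with $t=0$, and then verifies that $\{\theta_k \mid k\in[\rank]\}\cup\{t\}$ is a regular sequence by a Hilbert series computation, $F(\RR[t]/(\theta_k, t \mid k\in[\rank]),\q)=F(\RR/(\alpha_k\varpi_k \mid k\in[\rank]),\q)=\Poin(\Pet_\Phi,\q)=(1+\q^2)^{\rank}$, which leans on the known presentation of $H^*(\Pet_\Phi)$ (Theorem~\ref{theorem:CohomologyPet}) and Precup's formula for the Poincar\'e polynomial (Theorem~\ref{theorem:PoincarePolynomialPet}). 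You instead analyze the zero locus directly after setting $t=0$: the support $C$ of a putative solution avoids the indices in $J$, satisfies $\varpi_k=0$ for every $k\in C$, and the resulting homogeneous linear system has coefficient matrix the principal submatrix $D_C$ of the inverse Cartan matrix, which is invertible because the Gram matrix $M_{ij}=(\varpi_i,\varpi_j)$ is positive definite. Your argument is self-contained --- it uses no cohomological input about $\Pet_\Phi$ and, as a by-product, gives an independent proof that $\alpha_1\varpi_1,\dots,\alpha_{\rank}\varpi_{\rank}$ is a regular sequence in $\RR$ --- at the cost of the extra root-theoretic fact about principal submatrices of the inverse Cartan matrix; the paper's version is shorter given that Theorems~\ref{theorem:PoincarePolynomialPet} and \ref{theorem:CohomologyPet} are already quoted. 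The only blemish is a harmless transposition: from $(\varpi_i,\varpi_j)=d_{ji}\lvert\alpha_i\rvert^2/2$ one gets $M=\diag(\lvert\alpha_i\rvert^2/2)\,D^{T}$ rather than $D^{T}\diag(\lvert\alpha_i\rvert^2/2)$, but since the diagonal factor restricts compatibly to any principal block, the conclusion $\det D_C\neq 0$ is unaffected.
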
 

\begin{proof}
Since $\RR[t]$ is the polynomial ring with $\rank+1$ variables $\varpi_1,\ldots,\varpi_\rank,t$ (or $\alpha_1,\ldots,\alpha_\rank,t$), by Lemma~\ref{lemma:commutaive algebra}-(iii) it is enough to prove that the solution set of the equations 
\begin{displaymath}
\left\{
\begin{array}{l}
\theta_k=0 \ \ \ \textrm{for} \ k \in [\rank] \ \textrm{with} \ \alpha_k \in I \setminus J \\
\xi_j=0 \ \ \ \textrm{for} \ j \in [\rank] \ \textrm{with} \ \alpha_j \in J \\
\nu_i=0 \ \ \ \textrm{for} \ i \in [\rank] \ \textrm{with} \ \alpha_i \in \Delta \setminus I \\
t=0
\end{array}
\right.
\end{displaymath}
in $\C^{\rank+1}$ consists only of the origin $\{0\}$. 
The equations above imply $\sum_{j \in [\rank]} \langle \alpha_i, \alpha_j \rangle \varpi_i \varpi_j=\alpha_i\varpi_i=0$ for all $i \in [\rank]$.  
As seen in the proof of \cite[Theorem~4.1]{HHM}, these equations have only the trivial solution as desired.
\end{proof} 

\begin{proposition} \label{proposition:Hilbert series LIJtilde}
The Hilbert series of $\RR[t]/\tilde \L_I^J$ is equal to
\begin{align*} 
F(\RR[t]/\tilde \L_I^J,\q)=\frac{(1+q^2)^{|I|-|J|}}{1-q^2}.
\end{align*}
\end{proposition}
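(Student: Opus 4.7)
The plan is to derive this Hilbert series as an immediate corollary of Lemma~\ref{lemma:regular sequence tildeLIJ} combined with the Hilbert-series criterion of Lemma~\ref{lemma:commutaive algebra}(i). The only preliminary point is to promote the regular sequence of Lemma~\ref{lemma:regular sequence tildeLIJ}, which also contains $t$, to a regular sequence consisting solely of the generators of $\tilde\L_I^J$.

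For that promotion I would argue as follows. The proof of Lemma~\ref{lemma:regular sequence tildeLIJ} passes through the order-independent criterion of Lemma~\ref{lemma:commutaive algebra}(iii): the list has $\rank+1$ homogeneous elements in the polynomial ring $\RR[t]$ with $\rank+1$ variables, and regularity is characterized there solely by the common zero set being $\{0\}$. Hence regularity holds for any ordering, in particular the one with $t$ placed last, and truncating such an ordering by removing $t$ preserves regularity directly from the definition.

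Applying Lemma~\ref{lemma:commutaive algebra}(i) to this truncated sequence yields
\begin{align*}
F(\RR[t]/\tilde\L_I^J,q)
&= F(\RR[t],q)\prod_{\alpha_k\in I\setminus J}(1-q^{\deg\theta_k})\prod_{\alpha_j\in J}(1-q^{\deg\xi_j})\prod_{\alpha_i\in\Delta\setminus I}(1-q^{\deg\nu_i}) \\
&= \frac{(1-q^4)^{|I|-|J|}(1-q^2)^{|J|+\rank-|I|}}{(1-q^2)^{\rank+1}},
\end{align*}
using $\deg\theta_k=4$ (a product of two degree-$2$ factors) and $\deg\xi_j=\deg\nu_i=2$. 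Factoring $1-q^4=(1-q^2)(1+q^2)$ and observing that $(|I|-|J|)+|J|+(\rank-|I|)=\rank$, the powers of $(1-q^2)$ collapse to leave exactly $(1+q^2)^{|I|-|J|}/(1-q^2)$, as required. No step should present a real obstacle: the regular-sequence input is supplied by Lemma~\ref{lemma:regular sequence tildeLIJ}, and the remainder is bookkeeping of exponents together with the identity $1-q^4=(1-q^2)(1+q^2)$.
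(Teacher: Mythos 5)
Your proposal is correct and follows essentially the same route as the paper: drop $t$ from the regular sequence of Lemma~\ref{lemma:regular sequence tildeLIJ} (the paper simply observes that a truncation of a regular sequence is regular, since $t$ is listed last; your extra remark on order-independence via Lemma~\ref{lemma:commutaive algebra}(iii) is harmless), then apply Lemma~\ref{lemma:commutaive algebra}(i) and carry out the same degree bookkeeping. Nothing further is needed.
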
 

\begin{proof}
In general, if a sequence $\theta_1,\dots,\theta_\m$ is a regular sequence in a graded $\Q$-algebra $R$, then $\theta_1,\dots,\theta_{\m-1}$ is also a regular sequence in $R$ by Definition~\ref{definition:regular sequence}. 
In our setting, Lemma~\ref{lemma:regular sequence tildeLIJ} implies that the set of sequences $\{\theta_k \mid k \in [\rank] \ \textrm{with} \ \alpha_k \in I \setminus J \} \cup \{\xi_j \mid  j \in [\rank] \ \textrm{with} \ \alpha_j \in J \} \cup \{\nu_i \mid i \in [\rank] \ \textrm{with} \ \alpha_i \in \Delta \setminus I \}$ is a regular sequence in $\RR[t]$. 
It then follows from Lemma~\ref{lemma:commutaive algebra}-(i) that
\begin{align*} 
&F(\RR[t]/\tilde \L_I^J,\q)\\
=&F(\RR[t],\q) \left(\prod_{k \in [\rank] \atop \alpha_k \in I \setminus J} (1-\q^{\deg \theta_k}) \right) \left(\prod_{j \in [\rank] \atop \alpha_j \in J} (1-\q^{\deg \xi_k}) \right) \left(\prod_{i \in [\rank] \atop \alpha_i \in \Delta \setminus I} (1-\q^{\deg \nu_i}) \right) \\
=&\frac{1}{(1-\q^2)^{\rank+1}}(1-\q^4)^{|I|-|J|} (1-\q^2)^{|J|} (1-\q^2)^{\rank-|I|} \\
=&\frac{(1+\q^2)^{|I|-|J|}}{1-\q^2}.
\end{align*}
\end{proof} 

By \eqref{eq:HilbertSeriesHSYIJ} and Proposition~\ref{proposition:Hilbert series LIJtilde}, the both sides of \eqref{eq:psiIJ} have the same Hilbert series:
\begin{align} \label{eq:HilbertSeriesLIJtildeHSYIJ}
F(\RR[t]/\tilde \L_I^J, \q)=F(H^*_S(Y_I^J),\q)=\frac{(1+\q^2)^{|I|-|J|}}{1-\q^2}.
\end{align}

\begin{theorem} \label{theorem:psiIJiso}
The map $\psi_I^J$ in \eqref{eq:psiIJ} is an isomorphism.
\end{theorem}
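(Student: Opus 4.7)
The plan hinges on the Hilbert-series coincidence \eqref{eq:HilbertSeriesLIJtildeHSYIJ}. Since both $\RR[t]/\tilde\L_I^J$ and $H^*_S(Y_I^J)$ are graded $\Q$-algebras whose graded pieces are finite-dimensional $\Q$-vector spaces of equal dimension in every degree, the graded map $\psi_I^J$ is an isomorphism if and only if it is injective (equivalently, surjective) in each degree. So it suffices to establish one of these two equivalent properties.

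I plan to prove injectivity via fixed-point localization. By Remark~\ref{remark:YIJ odd degree}, the restriction
\[
\iota_2 \colon H_S^*(Y_I^J) \hookrightarrow \bigoplus_{K : J \subset K \subset I} \Q[t]
\]
is injective, so injectivity of $\psi_I^J$ is equivalent to injectivity of the composite $\eta_I^J \coloneqq \iota_2 \circ \psi_I^J$. The components of $\eta_I^J$ admit an explicit description: combining Lemma~\ref{lemma:c1TLalpha} with \eqref{eq:alpha to t}, for $\alpha \in \t_\Q^*$ with $w_K(\alpha) = \sum_i c_i \alpha_i$ one has $\eta_I^J(\alpha)|_{w_K} = -\bigl(\sum_i c_i\bigr) t$ in $\Q[t]$, and this formula extends multiplicatively and $\Q[t]$-linearly to the full domain.

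To check that $\eta_I^J$ has trivial kernel, I would first use the linear relations $\xi_j$ (for $\alpha_j \in J$) and $\nu_i$ (for $\alpha_i \in \Delta \setminus I$) present in $\tilde\L_I^J$ to replace any class in $\RR[t]/\tilde\L_I^J$ by a representative polynomial involving only generators indexed by $I \setminus J$ together with $t$, modulo the quadratic relations $\theta_k$ for $\alpha_k \in I \setminus J$. After this reduction, the quotient and its map to $\bigoplus_K \Q[t]$ structurally mirror the equivariant Peterson presentation in Theorem~\ref{theorem:EquivariantCohomologyPet}, now applied to the sub-diagram indexed by $I \setminus J$ and the fixed points $\{w_K : J \subset K \subset I\}$. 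Injectivity then follows by carrying over the localization argument used in \cite{HHM} for the pure Peterson case.

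The principal obstacle lies in this last step: verifying that the fixed-point data indexed by $\{w_K : J \subset K \subset I\}$ genuinely separates classes in the reduced polynomial quotient. The action of $w_K$ on each simple root depends delicately on whether the root lies in $K$ or outside of it, and when $K$ interleaves simple roots from $J$ with those from $I \setminus J$ (e.g., when they are adjacent in the Dynkin diagram), the tempting identification $w_K = w_J \cdot w_{K\setminus J}$ breaks down and the reduction to a pure Peterson injection on $I \setminus J$ must carefully account for this coupling. Once injectivity is verified, the Hilbert-series equality \eqref{eq:HilbertSeriesLIJtildeHSYIJ} forces $\psi_I^J$ to be an isomorphism, completing the proof.
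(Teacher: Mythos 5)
Your overall framework---use the Hilbert-series equality \eqref{eq:HilbertSeriesLIJtildeHSYIJ} to reduce the theorem to injectivity, and attack injectivity by composing with the fixed-point restriction $\iota_2$---is sound, but the decisive step is missing, and you say so yourself. Showing that $\eta_I^J=\iota_2\circ\psi_I^J$ has trivial kernel is exactly the hard content of the theorem, and your proposed route (eliminate the variables indexed by $J$ and $\Delta\setminus I$ via $\xi_j$ and $\nu_i$, then ``carry over'' the HHM localization argument to the sub-diagram $I\setminus J$) does not go through as stated. The residual quadratic relations $\theta_k$ for $\alpha_k\in I\setminus J$, after substitution, are not the Peterson relations of the root subsystem generated by $I\setminus J$: the $\varpi_k$ are fundamental weights of $\Phi$, not of the subsystem, and the elimination couples them. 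Moreover the restriction of a class to $w_K$ for $J\subset K\subset I$ depends on all of $K$, not on $K\setminus J$; as you note, $w_K\neq w_Jw_{K\setminus J}$ in general. So the ``structural mirror'' is not established, and no independent argument is supplied for why the fixed points $\{w_K\mid J\subset K\subset I\}$ separate classes in the quotient ring. As written, the proof has a genuine gap at its central point.

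The paper closes this gap by a different mechanism worth internalizing: it never re-proves an injectivity-into-fixed-points statement for $Y_I^J$ directly. Instead it exploits the already-known isomorphism $\psi=\psi_\Delta^\emptyset$ (Theorem~\ref{theorem:EquivariantCohomologyPet}) together with the Borel localization theorem. Inverting $\mathcal{T}=\Q[t]\setminus\{0\}$ makes both horizontal restriction maps to fixed points isomorphisms; since the map $\bigoplus_{w\in(\Pet_\Phi)^S}\Q(t)\to\bigoplus_{w\in(Y_I^J)^S}\Q(t)$ is projection onto a subset of components, hence surjective, one obtains surjectivity of $\mathcal{T}^{-1}\psi_I^J$ for free, and the Hilbert-series count upgrades this to an isomorphism after localization. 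Injectivity of $\psi_I^J$ itself then follows because $t$ is a non-zero-divisor on both sides: $H^*_S(Y_I^J)$ is a free $\Q[t]$-module by \eqref{eq:free_HS(YIJ)}, and $t$ is a non-zero-divisor in $\RR[t]/\tilde \L_I^J$ by Lemma~\ref{lemma:regular sequence tildeLIJ}, so both rings inject into their localizations. If you want to keep your direct approach you must actually prove the separation statement you flag as the obstacle; otherwise, route the argument through $\psi_\Delta^\emptyset$ and the localization theorem as above.
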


\begin{proof}
This proof is based on the idea of \cite[Section~7]{AHHM}. 
Let $\tilde \L$ be the ideal $\tilde \L_{\Delta}^{\emptyset}$ in $\RR[t]$ and we denote by $\psi$ the map $\psi_{\Delta}^{\emptyset}$ defined in \eqref{eq:psiIJ} for $I = \Delta$ and $J=\emptyset$. 
Then, it follows from Theorem~\ref{theorem:EquivariantCohomologyPet} that the map $\psi$ is an isomorphism.
We also note that there is a natural surjective map $\RR[t]/\tilde \L \rightarrow \RR[t]/\tilde \L_I^J$ since $\tilde \L \subset \tilde \L_I^J$ by definition. 
Hence, we obtain the following commutative diagram:
\[
\begin{CD}
\RR[t]/\tilde \L @>\psi >\cong> H^*_S(\Pet_\Phi) @>>> H^*_S\big((\Pet_\Phi)^S\big) \\
@VV\text{surj}V @VVV @VV\text{surj}V \\
\RR[t]/\tilde \L_I^J @>\psi_I^J >> H^*_S(Y_I^J) @>>> H^*_S\big((Y_I^J)^S\big) 
\end{CD}
\]
where all arrows in the right commutative diagram denote the restriction maps.
Consider the localization of the rings in the commutative diagram above with respect to the multiplicatively closed set $\mathcal{T}\coloneqq \Q[t] \setminus \{0\}$ as follows: 
\[
\begin{CD}
\mathcal{T}^{-1}(\RR[t]/\tilde \L) @>\mathcal{T}^{-1}\psi >\cong> \mathcal{T}^{-1}H^*_S(\Pet_\Phi) @>>\cong> \mathcal{T}^{-1}H^*_S\big((\Pet_\Phi)^S\big) \\
@VV\text{surj}V @VVV @VV\text{surj}V \\
 \mathcal{T}^{-1}(\RR[t]/\tilde \L_I^J) @>\mathcal{T}^{-1}\psi_I^J >> \mathcal{T}^{-1}H^*_S(Y_I^J) @>>\cong> \mathcal{T}^{-1}H^*_S\big((Y_I^J)^S\big) 
\end{CD}
\]
Then, the horizontal maps in the right-hand square are isomorphisms by the localization theorem (\cite{Hsi}) which implies that the middle vertical map is surjective.
Thus, we see that $\mathcal{T}^{-1}\psi_I^J$ is also surjective. A comparison of Hilbert series shows that $\mathcal{T}^{-1}\psi_I^J$ is an isomorphism by \eqref{eq:HilbertSeriesLIJtildeHSYIJ}.  
Finally, consider the following commutative diagram:
\[
 \begin{CD}
 \RR[t]/\tilde \L_I^J @>\psi_I^J>> H^*_S(Y_I^J) \\
 @VV\text{inj}V @VV\text{inj}V \\
 \mathcal{T}^{-1}(\RR[t]/\tilde \L_I^J) @>\mathcal{T}^{-1}\psi_I^J >\cong > \mathcal{T}^{-1}H^*_S(Y_I^J)
 \end{CD}
 \]
Note that the rightmost vertical map is injective since $H^*_S(Y_I^J)$ is a free $\Q[t]$-module by \eqref{eq:free_HS(YIJ)}.
One can see that the leftmost vertical map is also injective. 
In fact, it follows from Lemma~\ref{lemma:regular sequence tildeLIJ} that $t$ is a non-zero-divisor in $\RR[t]/\tilde \L_I^J$ by the definition of a regular sequence. 
This implies that the left vertical arrow is injective.
Hence, we have the injectivity for $\psi_I^J$.
Therefore, we conclude from \eqref{eq:HilbertSeriesLIJtildeHSYIJ} that $\psi_I^J$ is an isomorphism. 
\end{proof}

\begin{proof}[Proof of Theorem~\ref{theorem:Cohomology1}]
Since $H^{odd}(Y_I^J)=0$ (Remark~\ref{remark:YIJ odd degree}), by setting $t=0$ we obtain the $\Q$-algebra isomorphism 
\begin{align*} 
H^*(Y_I^J) \cong \RR/\L_I^J,
\end{align*}
which sends $\alpha \in \RR/\L_I^J$ to $c_1(L_{\alpha}^*) \in H^*(Y_I^J)$.
In other words, the map $\varphi_I^J: \RR \xrightarrow{\varphi} H^*(\Pet_\Phi) \rightarrow H^*(Y_I^J)$ is surjective and its kernel coincides with the ideal $\L_I^J$.
\end{proof}

\bigskip 

\section{Intersections of Peterson variety and Schubert varieties} 
\label{section:Intersections of Peterson variety and Schubert varieties}

The intersections of Peterson variety and Schubert varieties for general Lie type have been much studied in \cite{GMS}. (See also \cite{AHKZ} for type $A$.)
In this section we first outline the work of \cite{GMS} and one easy way to attain the result of the cohomology ring of the intersections of Peterson variety and Schubert varieties. 
We also verify that the Schubert intersection satisfies the conditions (1) and (2) in Theorem~\ref{theorem:Cohomology1}. 

For $w \in W$, the Schubert cell $X_w^\circ$ is defined to be the $B$-orbit of $wB$ in $G/B$.
Recall that the Schubert variety $X_w$ is the closure of the Schubert cell $X_w^\circ$.
As is well-known, $X_w$ is decomposed into the disjoint union of Schubert cells $X_v^\circ$ ($v \leq w$ in Bruhat order).
The following result says which Schubert cells intersect with the Peterson variety.

\begin{lemma} $($\cite[Lemma~5.7]{HaTy17} and \cite[Corollary~4.13]{Pre13}$)$ \label{lemma:intersectionSchubertcellPet}
Let $v \in W$.
The Schubert cell $X_v^\circ$ does intersect with $\Pet_\Phi$ if and only if $v=w_K$ for some $K \subset \Delta$. 
\end{lemma}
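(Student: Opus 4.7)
The plan is to prove the two directions separately, using the $S$-fixed point description from Lemma~\ref{lemma:PetFixedPoints} together with a Bialynicki-Birula type contraction of the $S$-action on each Schubert cell.

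For the easy direction, I observe that if $v=w_K$ for some $K\subset\Delta$, then $vB$ is automatically a point of $X_v^\circ$ (the unique $T$-fixed point of the Schubert cell), and $vB=w_KB$ lies in $\Pet_\Phi$ by Lemma~\ref{lemma:PetFixedPoints}; hence $vB\in X_v^\circ\cap\Pet_\Phi$ and the intersection is nonempty.

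For the converse, the main step is to show that $S$ contracts every Schubert cell $X_v^\circ$ to its $T$-fixed point $vB$. Writing $X_v^\circ=U_v\cdot vB$ with $U_v=\prod_{\alpha>0,\,v^{-1}(\alpha)<0}U_\alpha$, the $T$-action on $X_v^\circ$ through this identification satisfies $t\cdot u_\alpha(c)\cdot vB=u_\alpha(\alpha(t)c)\cdot vB$. By the definition of $S$ we have $\alpha_i(s(t))=t$ for every $i$, so any positive root $\alpha$ satisfies $\alpha(s(t))=t^{\mathrm{ht}(\alpha)}$ with $\mathrm{ht}(\alpha)\ge 1$. Consequently $\lim_{t\to 0}s(t)\cdot x=vB$ for every $x\in X_v^\circ$. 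If $x$ is any point of $X_v^\circ\cap\Pet_\Phi$, then since $\Pet_\Phi$ is closed in $G/B$ and $S$-invariant, the limit $vB$ must also lie in $\Pet_\Phi$ and be $S$-fixed, so Lemma~\ref{lemma:PetFixedPoints} forces $v=w_K$ for some $K\subset\Delta$.

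The hardest step will be the contraction statement above: one has to identify the Bialynicki-Birula decomposition of $G/B$ with respect to $S$ with the Schubert cell decomposition, which ultimately reduces to the positivity of the heights of positive roots when paired with the specific cocharacter of $S$. Once this is in hand the remaining argument is essentially formal. Alternatively, one could sidestep this weight computation by invoking \cite[Corollary~4.13]{Pre13}, which gives a paving of $\Pet_\Phi$ by the affine cells $\Pet_\Phi\cap X_{w_K}^\circ$ for $K\subset\Delta$; comparison with the Bruhat decomposition $G/B=\bigsqcup_{v\in W}X_v^\circ$ then forces $\Pet_\Phi\cap X_v^\circ=\emptyset$ whenever $v\notin\{w_K:K\subset\Delta\}$.
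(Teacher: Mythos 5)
Your proposal is correct, but note that the paper does not prove this lemma at all: it is quoted as a known result, with the two directions attributed to \cite[Lemma~5.7]{HaTy17} (the contraction/fixed-point argument) and \cite[Corollary~4.13]{Pre13} (the affine paving of $\Pet_\Phi$ by the cells $X_{w_K}^\circ\cap\Pet_\Phi$). So you are supplying a proof where the paper supplies a citation, and the argument you give is essentially the one underlying the cited references: the ``if'' direction via $w_K\in(\Pet_\Phi)^S$ from Lemma~\ref{lemma:PetFixedPoints}, and the ``only if'' direction by flowing a point of $X_v^\circ\cap\Pet_\Phi$ to the $T$-fixed point $vB$ under the $S$-action and using that $\Pet_\Phi$ is closed and $S$-stable. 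The weight computation is sound, with one small imprecision: the common value $\alpha_i|_S$ is a character $t\mapsto t^m$ with $m\neq 0$ fixed but not necessarily $m=1$ (e.g.\ $m=\pm 2$ for the $S\subset SL_{2m}$ used in Section~\ref{section:Intersections of Peterson variety and opposite Schubert varieties}), so a positive root $\alpha$ acts by $t^{m\,\mathrm{ht}(\alpha)}$ and one must send $t\to 0$ or $t\to\infty$ according to the sign of $m$; this does not affect the conclusion. Your closing alternative --- deducing the statement by comparing Precup's paving with the Bruhat decomposition --- is exactly the second citation and is the quickest route if one is willing to take \cite{Pre13} as a black box, whereas your contraction argument is self-contained modulo Lemma~\ref{lemma:PetFixedPoints} and has the added benefit of exhibiting $w_KB$ as a distinguished point of each nonempty cell $X_{w_K}^\circ\cap\Pet_\Phi$.
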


For $I \subset \Delta=\{\alpha_1,\ldots,\alpha_\rank \}$, we define   
\begin{align} \label{eq:intersectionPetersonSchubert}
X_I^\circ:=X_{w_I}^\circ \cap \Pet_\Phi \ \textrm{and} \ X_I:=X_{w_I} \cap \Pet_\Phi.
\end{align}
The closure of $X_I^\circ$ is irreducible and its complex dimension is equal to $|I|$ (\cite[Proposition~3.2]{GMS}). 
As is well-known, the Schubert variety $X_{w_I}$ is the flag variety $G_I/B_I$ for the Levi subgroup $G_I$ generated by $I$ and $B_I=G_I \cap B$. 
The usual projection of the regular nilpotent element $N$ to the Levi subalgebra is the regular nilpotent element for the Levi subalgebra, and the intersection $X_I=X_{w_I} \cap \Pet_\Phi$ is then the Peterson variety for $G_I$. 
In particular, $X_I$ coincides with the closure of $X_I^\circ$ by the irreducibility of Peterson variety (\cite[Proposition~6.5]{GMS}). 
We record this property in the statement below.

\begin{proposition} $($\cite[Propositions~3.2 and 6.5]{GMS}$)$ \label{proposition:propertyXI}
For all $I \subset \Delta$, we have $X_I= \overline{X_I^\circ}$. Also, $X_I$ is the Peterson variety in the flag variety $G_I/B_I$, which is irreducible and $\dim_\C X_I = |I|$. 
\end{proposition}

\begin{remark}
The statement of Proposition~\ref{proposition:propertyXI} for type $A$ is proved by \cite[Proposition~3.4 and Corollary~3.6]{AHKZ}.
\end{remark}

As a consequence, one can calculate the cohomology ring for $X_I$ from Proposition~\ref{proposition:propertyXI} and Theorem~\ref{theorem:CohomologyPet} as follow.
 
\begin{corollary} \label{corollary:XI}
Let $I \subset \Delta$ and $X_I$ the intersection defined in \eqref{eq:intersectionPetersonSchubert}.
Then, the restriction map $H^*(\Pet_\Phi) \rightarrow H^*(X_I)$ is surjective. 
Moreover, there is an isomorphism of graded $\Q$-algebras
\begin{align*} 
H^*(X_I) \cong \RR/(\alpha_k \varpi_k \mid k \in [\rank] \ \textrm{with} \ \alpha_k \in I) +(\varpi_i \mid i \in [\rank] \ \textrm{with} \ \alpha_i \in \Delta \setminus I),
\end{align*}
which sends $\alpha$ to $c_1(L_\alpha^*)$. 
\end{corollary}

For the rest of this section, we confirm that $X_I$ satisfies the conditions~(1) and (2) in Theorem~\ref{theorem:Cohomology1} when $J$ is the empty set.

The following lemma is well-known fact (cf. \cite[Equation~(5.16)]{HaTy17}).

\begin{lemma} \label{lemma:Bruhat}
For two subsets $K$ and $I$ of $\Delta$, $w_K \leq w_I$ if and only if $K \subset I$. 
\end{lemma}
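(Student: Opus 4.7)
The plan is to prove the two implications of the biconditional separately using only standard facts about the Bruhat order and parabolic subgroups of Weyl groups, so nothing beyond the definitions recalled in Section~\ref{section:Peterson} should be needed.

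For the forward direction, suppose $K \subset I$. Decomposing $K$ and $I$ into their connected Dynkin subdiagrams, the parabolic subgroup $W_K$ embeds into $W_I$, and by the very definition recalled in Section~\ref{section:Peterson}, $w_K$ is the longest element of $W_K$. Hence $w_K \in W_K \subset W_I$. Since $w_I$ is the Bruhat-maximal element of $W_I$, one obtains $w_K \leq w_I$ immediately.

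For the reverse direction, suppose $w_K \leq w_I$. I would invoke the subword property of the Bruhat order: any reduced expression of $w_I$ uses only simple reflections $s_i$ with $\alpha_i \in I$ (since $w_I \in W_I$), and $w_K \leq w_I$ guarantees that $w_K$ admits a reduced expression obtained by deleting letters from some reduced word for $w_I$. In particular, the support of $w_K$, i.e.\ the set of simple reflections appearing in any reduced expression of $w_K$, is contained in $\{s_i \mid \alpha_i \in I\}$. On the other hand, since $w_K$ is the longest element of $W_K$, its support equals $\{s_k \mid \alpha_k \in K\}$: indeed, if some $s_k$ with $\alpha_k \in K$ failed to appear in any reduced expression, $w_K$ would already lie in the proper parabolic $W_{K \setminus \{\alpha_k\}}$ of $W_K$, contradicting the maximality of its length. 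Combining the two inclusions yields $K \subset I$.

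The argument is essentially definitional; the only point worth emphasizing, and the mildest obstacle, is the support identity for $w_K$ used in the second paragraph, which pins down exactly which simple reflections must appear in a reduced word for a parabolic longest element. Everything else is a direct consequence of the subword characterization of Bruhat order, so the overall proof should be short.
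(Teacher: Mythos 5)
Your proof is correct and follows essentially the same route as the paper: the forward direction is identical, and your reverse direction rests on the same subword/Bruhat-order facts. The paper's reverse direction is just slightly more direct — for each $\alpha_k \in K$ it notes $s_k \leq w_K \leq w_I$ and concludes $\alpha_k \in I$, avoiding the need to identify the full support of $w_K$.
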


\begin{proof}
If $K \subset I$, then $w_K$ belongs to the parabolic subgroup $W_I$. Since $w_I$ is the longest element of $W_I$, one has $w_K \leq w_I$.
Conversely, if $w_K \leq w_I$, then we show that $K \subset I$.
Take a simple root $\alpha_k \in K$.
Since $w_K$ is the longest element of $W_K$, the simple reflection $s_k$ satisfies $s_k \leq w_K$. This and the assumption $w_K \leq w_I$ yields that $s_k \leq w_I$. 
Thus, the simple root $\alpha_k$ must be an element of $I$. 
\end{proof}

It follows from Lemma~\ref{lemma:intersectionSchubertcellPet} that $X_I$ can be decomposed as follows:
\begin{align} \label{eq:decomp XI}
X_I=\left( \coprod_{v \leq w_I} X_v^\circ \right) \cap \Pet_\Phi= \coprod_{w_K \leq w_I} \left( X_{w_K}^\circ \cap \Pet_\Phi \right) =\displaystyle\coprod_{K \subset I} X_{K}^\circ.
\end{align}
Here, for the last equality we have used Lemma~\ref{lemma:Bruhat}.

\begin{lemma} \label{lemma:condition(1)XI}
The $S$-action on $\Pet_{\Phi}$ preserves $X_I$ and the set of $S$-fixed points is 
\begin{align*}
(X_I)^S =\{w_K \mid K \subset I \}.
\end{align*}
\end{lemma}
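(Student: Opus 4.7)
The plan is to decompose the assertion into two parts: $S$-invariance of $X_I$, and the explicit determination of $(X_I)^S$. For the first part, I would observe that $X_{w_I}=\overline{Bw_IB/B}$ is $B$-invariant, hence in particular $T$-invariant; since $S$ is a subtorus of $T$ and $S$ preserves $\Pet_\Phi$ by definition, the intersection $X_I=X_{w_I}\cap\Pet_\Phi$ is automatically stable under $S$. This step is essentially formal and should take only a sentence.

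For the fixed-point description, the key identity is
\begin{align*}
(X_I)^S = X_I\cap (G/B)^S = X_{w_I}^T\cap (\Pet_\Phi)^S,
\end{align*}
which uses the standard fact (already recalled in the paper) that $(G/B)^S=(G/B)^T$ can be identified with $W$. On the Schubert variety side, $X_{w_I}^T=\{v\in W \mid v\le w_I\}$, and on the Peterson side Lemma~\ref{lemma:PetFixedPoints} gives $(\Pet_\Phi)^S=\{w_K\mid K\subset\Delta\}$. Intersecting these two subsets of $W$ yields
\begin{align*}
(X_I)^S=\{w_K\mid K\subset\Delta,\ w_K\le w_I\}.
\end{align*}

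The final step is to apply Lemma~\ref{lemma:Bruhat}, which characterizes $w_K\le w_I$ precisely by $K\subset I$. Substituting this equivalence into the displayed set gives $(X_I)^S=\{w_K\mid K\subset I\}$, as claimed. The whole argument is short and essentially a bookkeeping exercise; I do not anticipate any real obstacle, since the Bruhat-order comparison between longest elements of parabolic subgroups has already been isolated as Lemma~\ref{lemma:Bruhat} and the coincidence $(G/B)^S=(G/B)^T$ is cited from \cite{HaTy17}. The only point to be careful about is to invoke these ingredients in the right order so as to avoid any circularity with the decomposition \eqref{eq:decomp XI}, which itself relies on Lemma~\ref{lemma:intersectionSchubertcellPet}; alternatively one could read the fixed-point set directly off \eqref{eq:decomp XI} by noting that each cell $X_K^\circ$ contains exactly one $T$-fixed point, namely $w_K$.
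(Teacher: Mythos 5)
Your proposal is correct and follows essentially the same route as the paper: $S$-invariance from $T$-invariance of the Schubert variety, then $(X_I)^S=(\Pet_\Phi)^S\cap X_{w_I}$ combined with Lemma~\ref{lemma:PetFixedPoints} and the characterization $w_K\le w_I\iff K\subset I$ from Lemma~\ref{lemma:Bruhat}. No issues.
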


\begin{proof}
Since the natural action of the maximal torus $T$ on $G/B$ preserves $X_{w_I}$, the Schubert variety $X_{w_I}$ admits the $S$-action by restriction.
Thus, the $S$-action on $\Pet_\Phi$ preserves $X_I$ by the definition \eqref{eq:intersectionPetersonSchubert}. 
Noting that the $S$-fixed point set $(X_I)^S$ is equal to the intersection $(\Pet_\Phi)^S \cap X_{w_I}$, $(X_I)^S$ coincides with the fixed points $w_K$ with $w_K \leq w_I$ in Bruhat order by Lemma~\ref{lemma:PetFixedPoints}.
Hence, it follows from Lemma~\ref{lemma:Bruhat} that $(X_I)^S =\{w_K \mid K \subset I \}$.
\end{proof}

Next we compute the Poincar\'e polynomial of $X_I$. 
The following proposition is useful for the computation of the Poincar\'e polynomial.

\begin{proposition} $($\cite[Section~B.3]{Ful97}, \cite[Examples~1.9.1 and 19.1.11]{Ful98}$)$ \label{proposition:paving}
If an algebraic variety $Z$ has a filtration $Z=Z_m \supset Z_{m-1} \supset \dots \supset Z_0 \supset Z_{-1}=\emptyset$ by closed subsets, with each $Z_i \setminus Z_{i-1}$ a disjoint union of $U_{i,j}$ isomorphic to an affine space $\C^{n(i,j)}$, then the homology classes $[\overline{U_{i,j}}]$ of the closures of $U_{i,j}$ forms a basis of $H_*(Z)$. 
\end{proposition}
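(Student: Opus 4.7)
The plan is to prove this by induction on the filtration index $i$, establishing at each stage that $H_*(Z_i)$ is free with basis the fundamental classes $[\overline{U_{k,j}}]$ for $k \le i$. The essential input is a parity statement: the relative homology of each consecutive pair vanishes in odd degrees and is free in even degrees, which I obtain from excision and the homology of affine space.

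First I would carry out the relative homology computation. By excision applied to the pair $(Z_i, Z_{i-1})$, one has $H_k(Z_i, Z_{i-1}) \cong \bigoplus_j H_k^{BM}(U_{i,j}) \cong \bigoplus_j H_k^{BM}(\C^{n(i,j)})$, since $Z_i \setminus Z_{i-1}$ is the disjoint union of the $U_{i,j}$ and each $U_{i,j}$ is open in $Z_i$. Using $H_k^{BM}(\C^n) = \Q$ for $k = 2n$ and $0$ otherwise, this group is concentrated in even degrees and is freely generated by the fundamental classes of the open strata $U_{i,j}$. Moreover, the restriction map $H_{2n(i,j)}(\overline{U_{i,j}}) \to H_{2n(i,j)}^{BM}(U_{i,j})$ sends the fundamental class $[\overline{U_{i,j}}]$ to the canonical generator, so under the map $H_{*}(Z_i) \to H_{*}(Z_i, Z_{i-1})$ each $[\overline{U_{i,j}}]$ hits the chosen basis element on the right.

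Next I would run the induction. The base case $i = 0$ reduces to $Z_0 = \coprod_j U_{0,j}$ and follows from the relative computation with $Z_{-1} = \emptyset$. For the inductive step, the long exact sequence of the pair $(Z_i, Z_{i-1})$ reads
\begin{equation*}
\cdots \to H_{k+1}(Z_i, Z_{i-1}) \to H_k(Z_{i-1}) \to H_k(Z_i) \to H_k(Z_i, Z_{i-1}) \to H_{k-1}(Z_{i-1}) \to \cdots.
\end{equation*}
By the inductive hypothesis and the relative computation, the groups on either side of each connecting homomorphism sit in degrees of opposite parities, so every connecting map is zero. The sequence therefore breaks into short exact sequences
\begin{equation*}
0 \to H_k(Z_{i-1}) \to H_k(Z_i) \to H_k(Z_i, Z_{i-1}) \to 0,
\end{equation*}
and lifting the basis of the quotient by the classes $[\overline{U_{i,j}}]$ produces, together with the inductive basis of $H_*(Z_{i-1})$, the desired basis of $H_*(Z_i)$.

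The main obstacle is foundational rather than combinatorial: one must know that each closure $\overline{U_{i,j}}$, which is in general a singular complex algebraic subvariety of $Z$, admits a well-defined fundamental class in (Borel--Moore) homology whose restriction to the open stratum $U_{i,j}$ is the canonical generator. This is a standard property of complex algebraic varieties, supplied either by triangulation or by Borel--Moore duality, and is essentially the content of the cited \cite{Ful97, Ful98}; once it is invoked the parity-driven induction above is routine.
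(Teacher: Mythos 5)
Your argument is correct and is essentially the standard proof from the cited references (Fulton, \emph{Young Tableaux}, \S B.3): the paper itself supplies no proof, simply quoting the result, and the excision-plus-parity induction on the long exact sequence of the pair is exactly the intended argument. The only point worth making explicit is that the identification $H_k(Z_i,Z_{i-1})\cong\bigoplus_j H_k^{BM}(U_{i,j})$ requires $Z$ to be compact (otherwise one should work in Borel--Moore homology throughout and conclude for $H_*^{BM}(Z)$), which is harmless here since every variety to which the paper applies the proposition is projective.
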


\begin{proposition} \label{proposition:condition(2)XI}
The Poincar\'e polynomial of $X_I$ is equal to 
\begin{align*}
\Poin(X_I,\q)=(1+\q^2)^{|I|}.
\end{align*}
\end{proposition}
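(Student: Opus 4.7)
The plan is to apply the paving-by-affines machinery of Proposition~\ref{proposition:paving} to the decomposition \eqref{eq:decomp XI}. By Precup's work (\cite[Corollary~4.13]{Pre13}), each nonempty intersection $X_K^\circ = X_{w_K}^\circ \cap \Pet_\Phi$ is isomorphic to an affine space $\C^{|K|}$; this is precisely the input needed, and it is the same ingredient that underlies Theorem~\ref{theorem:PoincarePolynomialPet}. So the cells in the decomposition $X_I = \coprod_{K \subset I} X_K^\circ$ are each affine, with $X_K^\circ \cong \C^{|K|}$.

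To invoke Proposition~\ref{proposition:paving} I must organize these cells into a filtration by closed subsets. The natural choice is to filter by dimension: set $Z_m \coloneqq \bigcup_{K \subset I, \, |K| \le m} X_K^\circ$. I need to check each $Z_m$ is closed in $X_I$. For this I would use that the closure of the Schubert cell $X_{w_K}^\circ$ inside $G/B$ is $X_{w_K} = \bigsqcup_{v \le w_K} X_v^\circ$, intersect with $\Pet_\Phi$, and apply Lemma~\ref{lemma:intersectionSchubertcellPet} together with Lemma~\ref{lemma:Bruhat} to conclude that the closure of $X_K^\circ$ inside $X_I$ equals $\bigsqcup_{K' \subset K} X_{K'}^\circ$. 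Since $\dim_\C X_{K'}^\circ = |K'| \le |K|$ for $K' \subset K$, the union $Z_m$ is a union of Schubert-type closures intersected with $\Pet_\Phi$, hence closed, and $Z_m \setminus Z_{m-1}$ is the disjoint union of all $X_K^\circ$ with $|K| = m$.

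With the paving in hand, Proposition~\ref{proposition:paving} immediately gives a basis of $H_*(X_I)$ indexed by the closures $\overline{X_K^\circ}$ for $K \subset I$, with the class $[\overline{X_K^\circ}]$ contributing to $H_{2|K|}(X_I)$. In particular the odd cohomology vanishes and
\begin{align*}
\Poin(X_I, q) = \sum_{K \subset I} q^{2|K|} = \sum_{k=0}^{|I|} \binom{|I|}{k} q^{2k} = (1+q^2)^{|I|},
\end{align*}
as required.

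The main obstacle is the affineness of the cells $X_K^\circ$, which is not proved here but cited from \cite{Pre13}; once that is granted, verifying that the filtration is by closed subsets is routine from the standard closure relations for Schubert varieties and Lemma~\ref{lemma:Bruhat}, and the final Poincar\'e polynomial computation is just the binomial theorem.
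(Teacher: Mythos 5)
Your proposal is correct and follows essentially the same route as the paper: the same filtration of $X_I$ by the (closures of the) cells $X_K^\circ$ ordered by $|K|$, the same key input $X_K^\circ\cong\C^{|K|}$ from \cite{Pre13}, and the same application of Proposition~\ref{proposition:paving} followed by the binomial count. The only difference is that you spell out the closedness of the filtration via the Schubert closure relations and Lemma~\ref{lemma:Bruhat}, which the paper leaves implicit by defining the filtration directly in terms of the closed sets $X_K$.
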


\begin{proof}
Let $Z_k=\displaystyle\bigcup_{K \subset I \atop |K| \leq k} X_K$ for each $1 \leq k \leq |I|$. 
Consider the filtration $X_I=Z_{|I|} \supset Z_{|I|-1} \supset \dots \supset Z_0 \supset Z_{-1}=\emptyset$.
Then, the decomposition \eqref{eq:decomp XI} yields that each $Z_k \setminus Z_{k-1}$ is a disjoint union of $X_{K}^\circ$ with $|K|=k$ and $K \subset I$. 
Since it follows from \cite[Theorem~4.10 and Corollary~4.13]{Pre13} that $X_{K}^\circ \cong \C^{|K|}$ for all $K \subset \Delta$, we obtain $H_{odd}(X_I)=0$ and $\dim_\Q H_{2k}(X_I)=\tbinom{|I|}{k}$ by Proposition~\ref{proposition:paving}.
Therefore, the result follows from the universal coefficient theorem. 
\end{proof}

Lemma~\ref{lemma:condition(1)XI} and Proposition~\ref{proposition:condition(2)XI} are nothing but the conditions~(1) and (2) in Theorem~\ref{theorem:Cohomology1} when $J$ is the empty set. 
Therefore, we also obtain Corollary~\ref{corollary:XI} by using Theorem~\ref{theorem:Cohomology1}.

\begin{remark}
We outline an alternative approach by \cite{RWY} to attain Corollary~\ref{corollary:XI}. 
In \cite{RWY} we understand the Kronecker dual to the homology basis given by the Schubert varieties for presenting the cohomology rings of Schubert varieties.
In the case of the Peterson variety, the homology classes $\{[X_I]\}_{I \subset \Delta}$ forms a basis for $H_*(\Pet_\Phi)$ as discussed above.
In more general, $\{[X_K]\}_{K \subset I}$ forms a basis for $H_*(X_I)$ for any $I \subset \Delta$. 
We denote by $p_I$ the image of the Schubert class $\sigma_{v_I}$ associated with Coxeter element $v_I$ under the restriction map $H^*(G/B) \to H^*(\Pet_\Phi)$.
By \cite{Dre15} the set $\{p_I \}_{I \subset \Delta}$ is a basis of $H^*(\Pet_\Phi)$. 
Note that this fact for type $A$ is proved by \cite{HaTy11}.
We also note that \cite{AHKZ} gives a geometric interpretation for the basis in type $A$. 
In fact, $p_I$ reflects the geometry of $\Omega_I$ in type $A$. 
Goldin--Mihalcea--Singh proves in \cite{GMS} that $\{\frac{p_I}{m(v_I)} \}_{I \subset \Delta}$ is the Kronecker dual to $\{[X_I]\}_{I \subset \Delta}$ for any Lie types where $m(v_I)$ is the multiplicity of the (unique) intersection point of $\Omega_{v_I} \cap X_I$. 
Consequently, the restriction map $H^*(\Pet_\Phi) \to H^*(X_I)$ is surjective with kernel 
\begin{align*}
\mathcal{K}_I \coloneqq \Q \textrm{-span of} \ \{p_K \mid K \not\subset I \}. 
\end{align*}
This yields the isomorphism $H^*(X_I) \cong H^*(\Pet_\Phi)/\mathcal{K}_I$.
What we want to show is that the ideal $\mathcal{K}_I$ is generated by $c_1(L_{\varpi_i}^*)$'s for $i \in [\rank]$ with $\alpha_i \in \Delta \setminus I$.
One can prove this by using Giambelli's formula and Monk's formula in Peterson variety given by \cite{Dre15}.
In fact, since $p_K$ equals $\prod_{k \in K} c_1(L_{\varpi_k}^*)$ up to a scalar multiplication by the Giambelli's formula, $\mathcal{K}_I$ is included in the ideal $(c_1(L_{\varpi_i}^*) \mid i \in [\rank] \ \textrm{with} \ \alpha_i \in \Delta \setminus I)$. 
In order to prove the opposite inclusion $(c_1(L_{\varpi_i}^*) \mid i \in [\rank] \ \textrm{with} \ \alpha_i \in \Delta \setminus I) \subset \mathcal{K}_I$, it suffices to show that $p_L \cdot c_1(L_{\varpi_i}^*)$ belongs to $\mathcal{K}_I$ for all $L \subset \Delta$ and $i \in [\rank]$ with $\alpha_i \in \Delta \setminus I$ since $\{p_L\}_{L \subset \Delta}$ forms a basis of $H^*(\Pet_\Phi)$. 
However, this follows from the Monk's formula, so we conclude that 
\begin{align*}
H^*(X_I) &\cong H^*(\Pet_\Phi)/\mathcal{K}_I \\
&\cong \RR/(\alpha_k \varpi_k \mid k \in [\rank] \ \textrm{with} \ \alpha_k \in I) +(\varpi_i \mid i \in [\rank] \ \textrm{with} \ \alpha_i \in \Delta \setminus I).
\end{align*}
Note that we used Theorem~\ref{theorem:CohomologyPet} for the second isomorphism above. 
\end{remark}

\bigskip

\section{Intersections of Peterson variety and opposite Schubert varieties in type $A$} \label{section:Intersections of Peterson variety and opposite Schubert varieties}

In this section we consider the intersections of Peterson variety and opposite Schubert varieties in type $A_{n-1}$, which is the opposite situation to Section~\ref{section:Intersections of Peterson variety and Schubert varieties}. 
We aim to describe a presentation of the cohomology rings of the special cases of their intersections.

The flag variety $\Fl(\C^n)$ in type $A_{n-1}$ is the set of nested complex vector spaces $V_\bullet \coloneqq (V_1 \subset V_2 \subset \cdots \subset V_n = \C^n)$ where each $V_i$ is an $i$-dimensional subspace of $\C^n$.
Let $N$ be a regular nilpotent matrix, which is a matrix whose Jordan form consists of exactly one Jordan block with corresponding eigenvalue equal to $0$.
The Peterson variety $\Pet_n$ in type $A_{n-1}$ is defined by 
\begin{align} \label{eq:Pet_typeA}
\Pet_n \coloneqq \{V_\bullet \in \Fl(\C^n) \mid NV_i \subset V_{i+1} \ \textrm{for all} \ i=1,\ldots,n-1 \}.
\end{align}
We may assume that $N$ is in Jordan form. 
As is well-known, $\Fl(\C^n)$ can be identified with $SL_n(\C)/B$ where $B$ is the set of the upper triangular matrices in the special linear group $SL_n(\C)$. 
Under the identification $SL_n(\C)/B \cong \Fl(\C^n)$, we can write 
\begin{align*}
\Pet_n = \{gB \in SL_n(\C)/B \mid  (g^{-1}Ng)_{i,j} = 0 \ \textrm{for} \ i, j \in [n] \ \textrm{with} \ i > j+1 \},
\end{align*}
where $(g^{-1}Ng)_{i,j}$ denotes the $(i,j)$-th component of the matrix $g^{-1}Ng$. 

Let $T$ be the set of the diagonal matrices in $SL_n(\C)$ and $\perm_n$ the permutation group on $[n]=\{1,\ldots,n\}$. 
The maximal torus $T$ naturally acts on the flag variety $\Fl(\C^n) \cong SL_n(\C)/B$ and the fixed point set $\big(\Fl(\C^n)\big)^T$ consists of the permutation flags $V^{(w)}_\bullet=(V^{(w)}_i)_{i\in[n]}$ given by $V^{(w)}_i=\textrm{span}(e_{w(1)},e_{w(2)},\ldots,e_{w(i)})$ where $e_1, e_2, \ldots, e_n$ denotes the standard basis of $\C^n$.
We may regard a permutation $w$ as the permutation matrix (up to sign) in $SL_n(\C)$.
We define $S$ by the following one-dimensional subtorus of $T$:  
\begin{align*}
S &= 
\left\{
\left.
\begin{pmatrix}
g^{-m} & & & & & & \\
 & \ddots & & & & & \\
 & & g^{-1} & & & & \\
 & & & 1 & & & \\
 & & & & g & & \\
 & & & & & \ddots & \\
 & & & & & & g^m \\
\end{pmatrix}
\in T\ 
\right| \ g\in \C^*
\right\} \ \ \ \textrm{if} \ n=2m+1 \ \textrm{odd}, \\
S &= 
\left\{
\left.
\begin{pmatrix}
g^{-(2m-1)} & & & & & & & \\
 & \ddots & & & & & & \\
 & & g^{-3} & & & & & \\
 & & & g^{-1} & & & & \\
 & & & & g & & & \\
 & & & & & g^3 & & \\
 & & & & & & \ddots & \\
 & & & & & & & g^{2m-1}\\
\end{pmatrix}
\in T\ 
\right| \ g\in \C^*
\right\} \ \ \ \textrm{if} \ n=2m \ \textrm{even}. 
\end{align*}
Then, one can easily check that the one-dimensional torus $S$ above preserves $\Pet_n$.

\begin{remark}
If $G=GL_n(\C)$, then the one-dimensional torus $S$ in $G$ which preserves $\Pet_n$ is often taken as $S=\{\diag(g,g^2,\ldots,g^n) \mid g \in \C^* \}$ (e.g. \cite{AHHM, AHKZ}).
However, we now consider the case when $G=SL_n(\C)$, so the definition of $S$ is modified as above.
\end{remark}

For $i \in [n-1]$, let $\alpha_i$ be the character of $T$ defined by $\diag(g_1,\ldots,g_n) \mapsto g_i g_{i+1}^{-1}$. 
Recall that we denote by $\Delta$ the set of simple roots, i.e. $\Delta=\{\alpha_1, \ldots, \alpha_{n-1} \}$ in this case.
Here and below, we identify the vertices $\Delta=\{\alpha_1, \ldots, \alpha_{n-1} \}$ of the Dynkin diagram of type $A_{n-1}$ with $[n-1]$ for simplicity.
For this reason, we use symbols $I,J,K$ as subsets of $[n-1]$. (Note that the symbols $I,J,K$ are used for subsets of $\Delta$ in previous sections.)
Every subset $K \subset [n-1]$ can be decompose into the connected components of the Dynkin diagram of type $A_{n-1}$:
\begin{align} \label{eq:connected components}
K = K_1 \sqcup K_2 \sqcup \cdots \sqcup K_m.
\end{align}
In other words, each $K_i \ (1 \leq i \leq m)$ means a maximal consecutive subset of $[n-1]$. 
For each connected component $K_i$, one can define the permutation subgroup $\perm_{K_i}$ on $K_i$ and the longest element $w_0^{(K_i)}$ of $\perm_{K_i}$.
Then, we define the permutation $w_K \in \perm_n$ by  
\begin{align*}
w_K \coloneqq w_0^{(K_1)} w_0^{(K_2)} \cdots w_0^{(K_m)}. 
\end{align*}

\begin{example}\label{eg: wJ}
{\rm
Let $n=9$ and $K=\{1,2\} \sqcup\{5,6,7\}$. 
Then, the one-line notation of $w_K$ is given by 
\begin{align*}
w_K=321487659.
\end{align*}
By identifying the permutation $w_K$ with its permutation matrix, we have 
\begin{align*}
 w_K 
 =
 \left(
 \begin{array}{@{\,}ccc|c|cccc|cc@{\,}}
     & & 1 & & & & & & \\
     & 1 & & & & & & & \\ 
    1 & & & & & & & & \\ \hline
     & & & 1 & & & & &  \\ \hline
     & & & & & & & 1&  \\
     & & & & &  & 1& & \\
     & & & & & 1& & & \\
     & & & & 1& & & & \\ \hline
     & & & & & & & & 1 
 \end{array}
 \right).
\end{align*}}
\end{example}

It follows from Lemma~\ref{lemma:PetFixedPoints} that the $S$-fixed point set $(\Pet_n)^S$ is given by 
\begin{align*}
(\Pet_n)^S=\{w_K \in \perm_n \mid K \subset [n-1] \}.
\end{align*}
Recall that a Schubert cell $X_v^\circ$ intersects with the Peterson variety $\Pet_n$ if and only if $v$ is of the form $w_K$ for some $K \subset [n-1]$ (Lemma~\ref{lemma:intersectionSchubertcellPet}). 
In type $A$, the intersections $X_{w_K}^\circ \cap \Pet_n$ have a good description (\cite{Ins, IY}).
We first explain the description of $X_{w_K}^\circ \cap \Pet_n$ for the case when $w_K$ is the longest element $w_0$ (i.e. $K=[n-1]$). 
To do that, we recall that each flag $V_\bullet$ in the Schubert cell $X_{w_0}^\circ$ has $V_j$ spanned by the first $j$ columns of a matrix with $1$'s in the $(w_0(j),j)$ positions, and $0$'s to the right of  these $1$'s, which is described as follows (cf. \cite[Section~10.2]{Ful97}): 
\begin{align} \label{eq:Xw0cell}
\left(
 \begin{array}{@{\,}ccccccc@{\,}}
     x_{11} & x_{12} & x_{13} & \cdots & x_{1 \, n-2} & x_{1 \, n-1} & 1\\
     x_{21} & x_{22} & \cdots & x_{2 \, n-3} & x_{2 \, n-2} & 1& \\ 
     x_{31} & \vdots & \iddots & x_{3 \, n-3} & 1& & \\ 
     \vdots& \iddots & \iddots & \iddots & & &  \\
     x_{n-2 \, 1} & x_{n-2 \, 2} & 1& & & & \\
     x_{n-1 \, 1} & 1& & & & & \\ 
     1& & & & & &  
 \end{array}
 \right). 
\end{align}
Here, $x_{ij}$'s above denote arbitrary complex numbers.
We may regard the Schubert cell $X_{w_0}^\circ$ as the set of matrices of \eqref{eq:Xw0cell}. 
One can verify from the definition \eqref{eq:Pet_typeA} that equations obtained by intersecting with $\Pet_n$ are given by 
\begin{align*}
x_{1k}=x_{2 \, k-1}=x_{3 \, k-2}=\cdots=x_{k1} 
\end{align*}
for all $1 \leq k \leq n-1$.
Namely, each flag $V_\bullet$ in the intersection $X_{w_0}^\circ \cap \Pet_n$ has $V_j$ spanned by the first $j$ columns of the following matrix
\begin{align} \label{eq:Xw0capPet}
\left(
 \begin{array}{@{\,}ccccccc@{\,}}
     x_1 & x_2 & x_3 & \cdots & x_{n-2} & x_{n-1} & 1\\
     x_2 & x_3 & \cdots & x_{n-2} & x_{n-1} & 1& \\ 
     x_3 & \vdots & \iddots & x_{n-1} & 1& & \\ 
     \vdots& \iddots & \iddots & \iddots & & &  \\
     x_{n-2} & x_{n-1} & 1& & & & \\
     x_{n-1} & 1& & & & & \\ 
     1& & & & & &  
 \end{array}
 \right), 
\end{align}
where $x_1,\ldots,x_{n-1}$ are arbitrary complex numbers. 
Note that $X_{w_0}^\circ \cap \Pet_n$ can be regarded as the set of matrices \eqref{eq:Xw0capPet}. 
This yields an isomorphism $X_{w_0}^\circ \cap \Pet_n \cong \C^{n-1}$. 
In general, for a subset $K$ of $[n-1]$, the Schubert cell $X_{w_K}^\circ$ consists of flags $V_\bullet=(V_j)_{j \in [n]}$ where $V_j$ is spanned by the first $j$ columns of a matrix with $1$'s in the $(w_K(j),j)$ positions, and $0$'s to the right of  these $1$'s, but with $0$'s under these $1$'s in this case (cf. \cite[Section~10.2]{Ful97}). 
In other words, the matrix which represents a flag in $X_{w_K}^\circ$ forms a block diagonal matrix such that each block is in the form of \eqref{eq:Xw0cell} of smaller size. 
We can think of $X_{w_K}^\circ$ as the set of such block diagonal matrices.

\begin{example}
If $n=9$ and $K=\{1,2\} \sqcup\{5,6,7\}$, then $X_{w_K}^\circ$ can be regarded as the set of the following matrices
\begin{align*}
 \left(
 \begin{array}{@{\,}ccc|c|cccc|cc@{\,}}
     x_{11} & x_{12} & 1 & & & & & & \\
     x_{21} & 1 & & & & & & & \\ 
    1 & & & & & & & & \\ \hline
     & & & 1 & & & & &  \\ \hline
     & & & & y_{11} & y_{12} & y_{13} & 1&  \\
     & & & & y_{21} & y_{22} & 1& & \\
     & & & & y_{31} & 1& & & \\
     & & & & 1& & & & \\ \hline
     & & & & & & & & 1 
 \end{array}
 \right), 
\end{align*}
where $x_{ij}$'s and $y_{ij}$'s above are arbitrary complex numbers.
\end{example}

Consider the decomposition \eqref{eq:connected components} into connected components. 
Let $C^{(K_i)}$ be the set of matrices of size $|K_i|+1$ whose nonzero variables repeat along the $|K_i|$-antidiagonals lying to the left of the main antidiagonal. (Note that if $K=[n-1]$, then $C^{(K)}$ for the unique connected component $K$ consists of the matrices in \eqref{eq:Xw0capPet}.)
It then follows from \eqref{eq:Pet_typeA} that $X_K^\circ$ consists of flags $V_\bullet=(V_j)_{j \in [n]}$ so that $V_j$ is spanned by the first $j$ columns of the following block diagonal matrix: 
\begin{align} \label{eq:XwKcapPet}
\left(
 \begin{array}{@{\,}c|c|c|ccc|c|c|c|ccc|c@{\,}}
     1 & & & & & & & & & & & & \\ \hline
     & \ddots & & & &  &  & & & & & & \\ \hline
     & & 1 & & &  & & & & & & & \\ \hline
     & & & & & & & & & & & & \\
     & & & & g_1 & & & & & & & & \\ 
     & & & & & & & & & & & & \\ \hline
     & & & & & & 1 & & & & & & \\ \hline
     & & & & & & & \ddots &  &  & & & \\ \hline
     & & & & & & & & 1 & & & & \\ \hline
     & & & & & & & & & & & & \\
     & & & & & & & & & & g_2 & & \\ 
     & & & & & & & & & & & & \\ \hline
     & & & & & & & & & & & & \ddots 
 \end{array}
 \right),
\end{align}
where $g_i \in C^{(K_i)}$ and the row numbers (or equivalently the column numbers) of $g_i$ belongs to $K_i \cup \{\max K_i +1 \}$ for all $1 \leq i \leq m$.
(See Example~\ref{eg: PetersonSchubertcell} below.) 
We may regard $X_K^\circ$ as the set of the block diagonal matrices in \eqref{eq:XwKcapPet}.
Note that one can directly see the following isomorphism
\begin{align*} 
X_K^\circ=X_{w_K}^\circ \cap \Pet_n \cong C^{(K_1)} \oplus C^{(K_2)} \oplus \cdots \oplus C^{(K_m)} \cong \C^{|K|}, 
\end{align*}
which is the special case of a result in \cite{Tym06}. 

\begin{example}\label{eg: PetersonSchubertcell}
Continuing Example~\ref{eg: wJ}, each flag in $X_K^\circ$ for $K=\{1,2\} \sqcup\{5,6,7\}$ and $n=9$ is represented by the following matrix
\begin{align*}
 \left(
 \begin{array}{@{\,}ccc|c|cccc|cc@{\,}}
     x_1 & x_2 & 1 & & & & & & \\
     x_2 & 1 & & & & & & & \\ 
    1 & & & & & & & & \\ \hline
     & & & 1 & & & & &  \\ \hline
     & & & & y_1 & y_2 & y_3 & 1&  \\
     & & & & y_2 & y_3 & 1& & \\
     & & & & y_3 & 1& & & \\
     & & & & 1& & & & \\ \hline
     & & & & & & & & 1 
 \end{array}
 \right),
\end{align*}
where $ x_1,x_2,y_1,y_2,y_3 \in \C$. 
If we write $K_1=\{1,2\}$ and $K_2=\{5,6,7\}$, then 
\begin{align*}
C^{(K_1)}= \left\{
\left.
\begin{pmatrix}
     x_1 & x_2 & 1 \\
     x_2 & 1 & \\ 
    1 & &  
\end{pmatrix}
 \ \right| \ x_1, x_2 \in \C \right\} \ \textrm{and} \ 
C^{(K_2)}= \left\{
\left.
\begin{pmatrix}
    y_1 & y_2 & y_3 & 1  \\
    y_2 & y_3 & 1&  \\
    y_3 & 1& &  \\
    1& & &  
\end{pmatrix}
 \ \right| \ y_1, y_2, y_3 \in \C \right\}.
\end{align*}
One can verify that $X_K^\circ \cong C^{(K_1)} \oplus C^{(K_2)} \cong \C^5$.  
\end{example}

We now consider the intersections of Peterson variety and \textit{oppossite} Schubert varieties $\Omega_w$.
Let $B^{-}$ denote the set of the lower triangular matrices in $SL_n(\C)$. 
For a permutation $w \in \perm_n$, the opposite Schubert cell $\Omega_w^\circ$ is defined as the $B^{-}$-orbit of the permutation flag $wB$ in $\Fl(\C^n)$.
The opposite Schubert variety $\Omega_w$ is the closure of $\Omega_w^\circ$.
Since $B^{-}=w_0Bw_0$, we have the decomposition $\Omega_w = \sqcup_{v \geq w} \, \Omega_v^\circ$ ($v \geq w$ in Bruhat order).
The following lemma is an analogue of Lemma~\ref{lemma:intersectionSchubertcellPet}.

\begin{lemma} $($\cite[Lemma~3.5]{AHKZ}$)$ \label{lemma:intersection_oppositeSchubertcellPet}
Let $v \in \perm_n$.
The opposite Schubert cell $\Omega_v^\circ$ does intersect with $\Pet_n$ if and only if $v=w_K$ for some $K \subset [n-1]$. 
\end{lemma}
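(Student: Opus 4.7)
The plan is to use the $S$-action on $\Fl(\C^n)$ together with the fact that $\Pet_n$ is $S$-invariant and closed. For the ``if'' direction, assume $v = w_K$ for some $K \subset [n-1]$. By Lemma~\ref{lemma:PetFixedPoints} the flag $w_K B$ lies in $(\Pet_n)^S \subset \Pet_n$, and it is the unique $T$-fixed point of the cell $\Omega_{w_K}^\circ$, so $\Omega_{w_K}^\circ \cap \Pet_n \neq \emptyset$.

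For the ``only if'' direction, I would pick $x \in \Omega_v^\circ \cap \Pet_n$ and study the limit $\lim_{g \to 0} g \cdot x$ taken in $\Fl(\C^n)$; this limit exists by properness and is a $T$-fixed point, hence a permutation flag. The central step is to identify this limit as $vB$. Writing $x = b^{-} v B$ with $b^{-} \in B^{-}$ and factoring $b^{-} = t u^{-}$ with $t \in T$ and $u^{-}$ lower unipotent, the relations $g t g^{-1} = t$ and $t v B = v B$ give
\begin{align*}
g \cdot x = (g b^{-} g^{-1}) v B = (g u^{-} g^{-1}) v B.
\end{align*}
From the explicit diagonal form of $S$ given in Section~\ref{section:Intersections of Peterson variety and opposite Schubert varieties}, every negative root evaluated on $g \in S$ is a positive power of $g$, so conjugation by $g$ scales each below-diagonal entry of $u^{-}$ by a positive power of $g$; consequently $g u^{-} g^{-1} \to I$ and $\lim_{g \to 0} g \cdot x = vB$.

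Once $\Omega_v^\circ$ is identified in this way as the Bia\l{}ynicki--Birula stable cell of $vB$ for the $S$-action, the conclusion is immediate: since $\Pet_n$ is closed and $S$-invariant, the limit $vB$ lies in $(\Pet_n)^S$, and Lemma~\ref{lemma:PetFixedPoints} forces $v = w_K$ for some $K \subset [n-1]$.

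The main obstacle is the weight computation ensuring that for the specific one-parameter subgroup $S$ chosen in the paper, \emph{every} below-diagonal entry of $u^{-}$ is contracted as $g \to 0$; this needs to be verified separately from the explicit formulas in the odd and even cases of $n$.
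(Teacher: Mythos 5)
The paper does not actually prove this lemma; it is quoted directly from \cite[Lemma~3.5]{AHKZ}, so there is no in-paper argument to compare yours against. Your proof is correct and self-contained. The ``if'' direction is immediate, since $w_KB\in\Omega_{w_K}^\circ$ and $w_KB\in(\Pet_n)^S\subset\Pet_n$ by Lemma~\ref{lemma:PetFixedPoints}. For the ``only if'' direction your Bia\l{}ynicki--Birula limit argument goes through: writing $x=b^-vB$ with $b^-=tu^-$, one gets $g\cdot x=(gb^-g^{-1})vB=t\,(gu^-g^{-1})vB$ (your displayed equation drops the factor $t$, but since $tvB=vB$ this does not affect the limit), and the weight check you single out as the main obstacle does hold in both parities: conjugation by the diagonal element of $S$ scales the $(i,j)$ entry of $u^-$ with $i>j$ by $s_i/s_j=g^{i-j}$ when $n=2m+1$ and by $g^{2(i-j)}$ when $n=2m$, in both cases a strictly positive power of $g$, so $gu^-g^{-1}\to I$ and $\lim_{g\to 0}g\cdot x=vB$. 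Since $\Pet_n$ is closed in $\Fl(\C^n)$ and $S$-stable, the limit lies in $(\Pet_n)^S$, and Lemma~\ref{lemma:PetFixedPoints} forces $v=w_K$. This is a clean argument and, as a bonus, it identifies $\Omega_v^\circ$ as sitting inside the attracting set of $vB$ for this $S$, which is the standard mechanism behind the cited result.
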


For $J \subset [n-1]$, we define   
\begin{align} \label{eq:intersectionPeterson_oppositeSchubert}
\Omega_J^\circ:=\Omega_{w_J}^\circ \cap \Pet_n \ \textrm{and} \ \Omega_J:=\Omega_{w_J} \cap \Pet_n.
\end{align}

\begin{proposition} $($\cite[Corollary~3.14]{AHKZ}$)$ \label{proposition:propertyOmegaJ}
For any subset $J$ of $[n-1]$, $\Omega_J$ is equidimensional and its codimension in $\Pet_n$ is equal to $|J|$. 
\end{proposition}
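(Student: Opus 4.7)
The plan is to stratify $\Omega_J$ into opposite Schubert cells intersected with $\Pet_n$ and establish a dimension formula cell by cell. By Lemma~\ref{lemma:intersection_oppositeSchubertcellPet}, only cells $\Omega_v^\circ$ with $v = w_K$ meet $\Pet_n$, and the argument of Lemma~\ref{lemma:Bruhat} adapts symmetrically to give $w_K \ge w_J \iff K \supset J$. These combine to yield
\begin{align*}
\Omega_J = \bigsqcup_{J \subset K \subset [n-1]} \bigl( \Omega_{w_K}^\circ \cap \Pet_n \bigr),
\end{align*}
with the minimal $K = J$ producing the ``top'' stratum.

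The central technical step is to show that each cell $\Omega_{w_K}^\circ \cap \Pet_n$ is irreducible of dimension $(n-1) - |K|$. I would parametrize $\Omega_{w_K}^\circ$ by its canonical matrix representative---$1$'s at positions $(w_K(j), j)$, zeros above each $1$ in its column, zeros to the right of each $1$ in its row, and free entries elsewhere---yielding $\binom{n}{2} - \ell(w_K)$ affine parameters. Imposing the Peterson conditions $N V_i \subset V_{i+1}$ produces a polynomial system whose analysis should exploit the block decomposition $K = K_1 \sqcup \cdots \sqcup K_m$ into connected components: the equations split along blocks, and within each connected block one reduces to a problem mirroring the Schubert--Peterson computation \eqref{eq:XwKcapPet} but with the opposite filtration. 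A careful coordinate analysis should show that exactly $\ell(w_K) - |K|$ of the Peterson equations become redundant on $\Omega_{w_K}^\circ$, producing the claimed dimension.

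Finally, for equidimensionality I would argue that every cell with $K \supsetneq J$ lies in the closure of some top-dimensional stratum: given such a cell, a one-parameter degeneration within $\Omega_{w_{K'}}^\circ \cap \Pet_n$ (for some $K' \subsetneq K$ with $K' \supset J$) whose limit lies in $\Omega_{w_K}^\circ \cap \Pet_n$ realizes it as a boundary point, so every irreducible component of $\Omega_J$ has dimension $(n-1) - |J|$, i.e.\ codimension $|J|$ in $\Pet_n$. The main obstacle is the second step: the intersection $\Omega_{w_K} \cap \Pet_n$ is highly non-transverse---a naive intersection-theoretic count gives $(n-1) - \ell(w_K)$, which strictly underestimates the actual dimension $(n-1) - |K|$ whenever $K$ has a connected component of size greater than $1$---so the bookkeeping of the redundancies among the Peterson equations, and the verification that the resulting variety is irreducible rather than merely equidimensional on each cell, will require significant care.
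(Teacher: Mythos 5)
First, a point of comparison: the paper does not prove this statement at all --- Proposition~\ref{proposition:propertyOmegaJ} is quoted verbatim from \cite[Corollary~3.14]{AHKZ}, so there is no internal proof to measure your argument against. Judged on its own terms, your proposal contains one outright false claim and leaves its two load-bearing steps unproved. The false claim is the irreducibility of the strata $\Omega_{w_K}^\circ\cap\Pet_n$: the paper's own Example~\ref{ex:non-irreducible case} refutes it. For $n=4$ and $K=\{1,3\}$ one has $\Omega_{w_{\{1,3\}}}^\circ\cap\Pet_4=\Omega_{\{1,3\}}\setminus\{w_0B\}$, which is two affine lines glued at the single point $w_{\{1,3\}}$ and hence reducible (it \emph{is} equidimensional of dimension $1=(n-1)-|K|$, which is all your closure argument actually needs, so this is repairable but must be restated). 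The more serious problem is that the heart of the proof is only described, not carried out: (i) the assertion that exactly $\ell(w_K)-|K|$ of the Peterson equations become redundant on the canonical coordinates of $\Omega_{w_K}^\circ$ is at least as strong as the proposition itself and is precisely the hard, non-transverse computation you flag at the end without resolving; and (ii) no mechanism is given for producing the one-parameter degenerations that are supposed to place every deeper stratum in the closure of the top one. As written, ``a careful coordinate analysis should show'' and ``a one-parameter degeneration \ldots realizes it as a boundary point'' are the entire content of the proof.

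A more economical route --- the one behind the cited \cite[Corollary~3.14]{AHKZ}, and the one this paper itself follows when it needs explicit dimension counts in Lemmas~\ref{lemma:Omegab} and \ref{lemma:Xab} --- is to avoid parametrizing opposite Schubert cells altogether and instead work in the affine paving $\Pet_n=\coprod_K X_K^\circ$ of \eqref{eq:decomp XI}, where $X_K^\circ\cong\C^{|K|}$ carries explicit Hankel-type coordinates. One uses $\Omega_J=\bigcap_{j\in J}\Omega_{\{j\}}$ together with the fact that each $\Omega_{\{j\}}$ is cut out on every chart $X_K^\circ$ by a single explicit equation (the vanishing of the $j$-th leading principal minor). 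Krull's principal ideal theorem then bounds the codimension of every irreducible component of $\Omega_J$ from above by $|J|$, while the chart-by-chart analysis of the minor equations, combined with the stratification \eqref{eq:decomp OmegaJ}, supplies the complementary dimension bound. This replaces both of your unproved steps by concrete computations with explicit polynomials, which is why it succeeds where the opposite-cell bookkeeping stalls.
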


\begin{remark}
The proof of \cite[Corollary~3.14]{AHKZ} implies that $\Omega_J$ is equidimensional.
\end{remark}

By using Lemmas~\ref{lemma:Bruhat} and \ref{lemma:intersection_oppositeSchubertcellPet}, one can see the following decomposition
\begin{align} \label{eq:decomp OmegaJ}
\Omega_J=\left( \coprod_{v \geq w_J} \Omega_v^\circ \right) \cap \Pet_n= \coprod_{w_K \geq w_J} \left( \Omega_{w_K}^\circ \cap \Pet_n \right) =\displaystyle\coprod_{K \supset J} \Omega_{K}^\circ.
\end{align}

By a similar argument of Lemma~\ref{lemma:condition(1)XI}, we can prove the following lemma.

\begin{lemma} \label{lemma:condition(1)OmegaJ}
Let $J$ be a subset of $[n-1]$.
The $S$-action on $\Pet_n$ preserves $\Omega_J$ and the fixed point set is given by
\begin{align*}
(\Omega_J)^S =\{w_K \mid K \supset J \}.
\end{align*}
\end{lemma}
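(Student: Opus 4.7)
The plan is to follow the template of Lemma~\ref{lemma:condition(1)XI} almost verbatim, replacing the Schubert variety $X_{w_I}$ by the opposite Schubert variety $\Omega_{w_J}$ and the Bruhat inequality $\leq$ by $\geq$. The argument decomposes into two independent parts: checking that $S$ preserves $\Omega_J$, and identifying the fixed point set.

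For the first part, I would observe that $\Omega_{w_J}=\overline{B^- w_J B/B}$ is automatically $T$-stable, since $T\subset B^-$. Restricting this action along the inclusion $S\hookrightarrow T$ shows that $S$ preserves $\Omega_{w_J}$, and because $S$ already preserves $\Pet_n$ by construction, it also preserves the intersection $\Omega_J=\Omega_{w_J}\cap \Pet_n$.

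For the second part, I would use the identification $(G/B)^S=(G/B)^T$ (a generic one-parameter subgroup of $T$ has the same fixed set on $G/B$) to obtain
\[
(\Omega_J)^S=(\Pet_n)^S\cap \Omega_{w_J}.
\]
Lemma~\ref{lemma:PetFixedPoints} identifies the first factor with $\{w_K\mid K\subset[n-1]\}$, while the standard Bruhat-theoretic description of the $T$-fixed points of an opposite Schubert variety gives $(G/B)^T\cap \Omega_{w_J}=\{v\in W\mid v\geq w_J\}$. Combining these yields $(\Omega_J)^S=\{w_K\mid w_K\geq w_J\}$, and Lemma~\ref{lemma:Bruhat} (applied with the roles of its two index sets swapped, so that $w_J\leq w_K$ is equivalent to $J\subset K$) converts this into the desired set $\{w_K\mid K\supset J\}$.

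This argument is essentially formal and I do not expect any serious obstacle; the only ingredient not already present in the proof of Lemma~\ref{lemma:condition(1)XI} is the description of the $T$-fixed points of an opposite Schubert variety, which is a standard consequence of the Bruhat decomposition for the opposite Borel $B^-=w_0 B w_0$.
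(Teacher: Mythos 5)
Your proposal is correct and is exactly the argument the paper intends: the paper gives no separate proof, stating only that the lemma follows ``by a similar argument of Lemma~\ref{lemma:condition(1)XI},'' and your adaptation (replacing $X_{w_I}$ by the $T$-stable $\Omega_{w_J}$, using $(G/B)^S=(G/B)^T$ together with Lemma~\ref{lemma:PetFixedPoints}, the Bruhat description $(G/B)^T\cap\Omega_{w_J}=\{v\mid v\geq w_J\}$ coming from $\Omega_{w_J}=\sqcup_{v\geq w_J}\Omega_v^\circ$, and Lemma~\ref{lemma:Bruhat}) is precisely that similar argument. No gaps.
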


The following decomposition is useful for computing the Poincar\'e polynomial of some $\Omega_J$.

\begin{lemma} \label{lemma:decomp_OmegaJ}
Let $J$ be a subset of $[n-1]$.
Then $\Omega_J$ has the following decomposition
\begin{align*}
\Omega_J=\displaystyle\coprod_{K \supset J} (\Omega_J \cap X_K^\circ).
\end{align*}
\end{lemma}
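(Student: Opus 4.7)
The plan is to obtain the decomposition by intersecting $\Omega_J$ with the cell paving of $\Pet_n$ induced by the Schubert cells.

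First, I would recall that by Lemma~\ref{lemma:intersectionSchubertcellPet} the Peterson variety admits the cell decomposition
\begin{align*}
\Pet_n = \coprod_{K \subset [n-1]} X_K^\circ,
\end{align*}
where $X_K^\circ = X_{w_K}^\circ \cap \Pet_n$. Since $\Omega_J \subset \Pet_n$, intersecting with $\Omega_J$ gives the tautological disjoint decomposition
\begin{align*}
\Omega_J = \coprod_{K \subset [n-1]} \bigl( \Omega_J \cap X_K^\circ \bigr).
\end{align*}

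The only thing left to verify is that $\Omega_J \cap X_K^\circ = \emptyset$ whenever $K \not\supset J$. Unwinding the definition \eqref{eq:intersectionPeterson_oppositeSchubert}, we have $\Omega_J \cap X_K^\circ = \Omega_{w_J} \cap X_{w_K}^\circ \cap \Pet_n$, so it suffices to show that $\Omega_{w_J} \cap X_{w_K}^\circ$ is empty unless $w_J \leq w_K$ in Bruhat order. For this I would use the standard opposite Schubert cell decomposition $\Omega_{w_J} = \coprod_{u \geq w_J} \Omega_u^\circ$ together with the classical fact that the open Richardson cell $X_{w_K}^\circ \cap \Omega_u^\circ$ is nonempty if and only if $u \leq w_K$. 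Thus $\Omega_{w_J} \cap X_{w_K}^\circ = \coprod_{w_J \leq u \leq w_K} (X_{w_K}^\circ \cap \Omega_u^\circ)$, which is nonempty precisely when $w_J \leq w_K$. By Lemma~\ref{lemma:Bruhat}, this is equivalent to $J \subset K$, completing the argument.

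The only potentially non-routine point is the openness assertion used in the second paragraph (nonemptiness of the open Richardson cells), but this is a well-known property of the Bruhat/Richardson stratification of the flag variety, so the proof should be short and the main content is just the bookkeeping of which cells can possibly meet $\Omega_J$.
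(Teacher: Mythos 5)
Your proof is correct and has the same skeleton as the paper's: restrict the cell paving $\Pet_n=\coprod_{K\subset[n-1]}X_K^\circ$ (i.e.\ \eqref{eq:decomp XI} with $I=\Delta$) to $\Omega_J$ and show the pieces with $K\not\supset J$ are empty. The difference lies in how that emptiness is justified. The paper quotes \cite[Proposition~3.2]{AHKZ}, which says that the intersection $X_I\cap\Omega_J$ taken \emph{inside the Peterson variety} is nonempty if and only if $I\supset J$, and uses the ``only if'' direction; you instead work entirely in $G/B$, writing $\Omega_{w_J}=\coprod_{u\ge w_J}\Omega_u^\circ$ and invoking the classical nonemptiness criterion for open Richardson cells $X_{w_K}^\circ\cap\Omega_u^\circ$ together with Lemma~\ref{lemma:Bruhat}. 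Your route is more self-contained: beyond the paving of $\Pet_n$ it needs nothing about Peterson varieties, and in fact only the easy direction of the Richardson criterion, since $X_{w_K}^\circ\cap\Omega_u^\circ\subset X_{w_K}\cap\Omega_u=\emptyset$ for $u\not\le w_K$. What the paper's citation buys in addition is the converse: for $K\supset J$ the piece $\Omega_J\cap X_K^\circ$ is actually nonempty (it contains $w_K$ by Lemma~\ref{lemma:condition(1)OmegaJ}). That nonemptiness is not needed for the set-theoretic equality in the lemma, so its absence from your argument is not a gap, but it is the reason the paper phrases the input as it does.
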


\begin{proof}
It follows from \cite[Proposition~3.2]{AHKZ} that 
\begin{align} \label{eq:intersectionXIOmegaJ}
X_I \cap \Omega_J \neq \emptyset \ \textrm{if and only if} \ I \supset J.
\end{align}
The property \eqref{eq:intersectionXIOmegaJ} yields that $X_I^\circ \cap \Omega_J = \emptyset$ if $I \not\supset J$.
Also, if $I\supset J$, then $w_I \in X_I^\circ \cap \Omega_J$ by Lemma~\ref{lemma:condition(1)OmegaJ}, in particular we have $X_I^\circ \cap \Omega_J \neq \emptyset$. 
Hence, we obtain 
\begin{align*} 
\Omega_J &= \Omega_J \cap \Pet_n = \Omega_J \cap \big(\displaystyle\coprod_{K \subset [n-1]} X_K^\circ \big) \ \ \ (\textrm{by} \  \eqref{eq:decomp XI}) \\
&=\displaystyle\coprod_{K \supset J} (\Omega_J \cap X_K^\circ). 
\end{align*}
\end{proof}

In what follows, we concentrate on the case when $J$ is of the form $[\bb]=\{1,2,\ldots,\bb \}$ for some $\bb$. 

\begin{lemma} \label{lemma:Omegab}
Let $\bb$ be a positive integer with $1 \leq \bb \leq n-1$. 
For $K \supset [\bb]$, we have
\begin{align*}
\Omega_{[\bb]} \cap X_K^\circ \cong \C^{|K|-\bb}.
\end{align*}
\end{lemma}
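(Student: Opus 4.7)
The plan is to exploit the explicit matrix model of $X_K^\circ$ given in \eqref{eq:XwKcapPet} and to read off the equations cut out by membership in $\Omega_{w_{[\bb]}}$.

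First, because $[\bb] \subset K$ and $[\bb]$ is connected, the simple roots $1,\ldots,\bb$ must sit inside the first connected component of $K$, so $K_1 = \{1,2,\ldots,k\}$ for some $k \geq \bb$. The leading $(k+1)\times(k+1)$ block of \eqref{eq:XwKcapPet} is then the antidiagonal matrix whose parameters $x_1,\ldots,x_k$ repeat along the antidiagonals strictly above the main antidiagonal, and its column $j$ (for $1 \leq j \leq k$), viewed as a vector in $\C^n$, is $(x_j, x_{j+1}, \ldots, x_k, 1, 0, \ldots, 0)^\top$ with the $1$ in row $k+2-j$. The remaining $|K|-k$ parameters live in the later blocks and do not interact with the first-block rows.

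Next I will establish the rank description
\begin{align*}
\Omega_{w_{[\bb]}} = \big\{ V_\bullet \in \Fl(\C^n) : V_p \subseteq \mathrm{span}(e_{\bb+2-p}, e_{\bb+3-p}, \ldots, e_n) \ \text{for all} \ 1 \leq p \leq \bb \big\}.
\end{align*}
Because $w_{[\bb]} = (\bb+1, \bb, \ldots, 1, \bb+2, \ldots, n)$ is the longest element of the parabolic subgroup $\perm_{\bb+1} \hookrightarrow \perm_n$, the right-hand side contains $\Omega_{w_{[\bb]}}$ (closed inclusions pass to the closure of the $B^-$-orbit of $w_{[\bb]}B$), is irreducible (it is a fiber bundle of flag-type varieties), and has codimension $\sum_{p=1}^{\bb}(\bb+1-p) = \binom{\bb+1}{2} = \ell(w_{[\bb]})$; hence equality holds.

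Finally, I will evaluate these $\bb$ inclusion conditions on a flag $V_\bullet \in X_K^\circ$. The requirement that $V_p$ have its first $\bb+1-p$ coordinates equal to zero amounts, by the column formula from the first step, to the system $x_j = x_{j+1} = \cdots = x_{j+\bb-p} = 0$ for each $1 \leq j \leq p$; taking the union over $p = 1, \ldots, \bb$ collapses these equations exactly to $x_1 = x_2 = \cdots = x_\bb = 0$, and conversely this vanishing already implies all of the required inclusions thanks to the antidiagonal repetition structure of the leading block. Imposing these $\bb$ equations on $X_K^\circ \cong \C^{|K|}$ leaves $|K|-\bb$ free parameters (namely $x_{\bb+1}, \ldots, x_k$ together with the parameters in the other blocks), yielding $\Omega_{[\bb]} \cap X_K^\circ \cong \C^{|K|-\bb}$. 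The main delicate point is the rank description of $\Omega_{w_{[\bb]}}$ in the second paragraph; once that is in hand, the remainder is pure matrix bookkeeping on the coordinates of $X_K^\circ$.
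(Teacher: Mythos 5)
Your proof is correct, and it reaches the same final computation as the paper (the locus $x_1=\cdots=x_\bb=0$ inside the block-diagonal model \eqref{eq:XwKcapPet} of $X_K^\circ$), but it gets there by a genuinely different route at the key step. The paper quotes two results from \cite{AHKZ}: that $\Omega_{[\bb]}=\bigcap_{j=1}^{\bb}\Omega_{\{j\}}$ and that $\Omega_{\{j\}}$ is cut out in $X_K^\circ$ by the vanishing of the leading principal minor of order $j$; it then needs a short induction to show that the vanishing of these Hankel-type minors of orders $1,\dots,\bb$ forces the \emph{linear} equations $x_1=\cdots=x_\bb=0$. You instead bypass \cite{AHKZ} entirely by identifying $\Omega_{w_{[\bb]}}$ with the containment locus $\{V_p\subseteq \mathrm{span}(e_{\bb+2-p},\dots,e_n),\ 1\le p\le \bb\}$ — justified by the standard ``closed, irreducible, same dimension'' comparison, using that $B^-$ preserves each $\mathrm{span}(e_q,\dots,e_n)$ and that $\ell(w_{[\bb]})=\binom{\bb+1}{2}$ matches the codimension of the containment locus — and then the equations on $X_K^\circ$ come out linear from the start (already the single condition $V_1\subseteq\mathrm{span}(e_{\bb+1},\dots,e_n)$ kills $x_1,\dots,x_\bb$, and the antidiagonal repetition makes the remaining conditions automatic). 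What your approach buys is self-containedness and linearity of the defining equations; what it costs is that you must actually prove the rank description, and the assertion that the containment locus is irreducible (``a fiber bundle of flag-type varieties'') deserves a sentence of justification via the iterated fibration $V_1\subset V_2\subset\cdots$. The paper's approach, by contrast, is shorter on the page because the determinantal description of $\Omega_{\{j\}}\cap X_K^\circ$ is imported wholesale from \cite{AHKZ}, and it transports verbatim to the more general Lemma~\ref{lemma:Xab}; your argument would also generalize there after conjugating the containment conditions appropriately.
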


\begin{proof}
Recall that $X_K^\circ$ can be regarded as the set of the block diagonal matrices in \eqref{eq:XwKcapPet} so that $X_K^\circ \cong \C^{|K|}$. 
We now see equations that define $\Omega_{[\bb]}$ in $X_K^\circ \cong \C^{|K|}$.
By \cite[Proposition~3.10]{AHKZ} we have $\Omega_J=\cap_{j \in J} \Omega_{\{j\}}$ for arbitrary $J \subset [n-1]$. 
Hence, one can write
\begin{align} \label{eq:Omegab}
\Omega_{[\bb]} = \Omega_{\{1\}} \cap \Omega_{\{2\}} \cap \cdots \cap \Omega_{\{\bb\}}
\end{align}
in this case.
By \cite[Proposition~3.12]{AHKZ} the defining equation of $\Omega_{\{j\}}$ in $X_K^\circ$ is given by
\begin{align} \label{eq:defining equation Omegaj} 
\textrm{the leading principal minor of order} \ j \ \textrm{of} \ g = 0 \ \ \ (g \in X_K^\circ).
\end{align}
By the assumption $K \supset [\bb]$, the first block $g_1 \in C^{(K_1)}$ is located in the top-left corner of $g \in X_K^\circ$ as follows:
\begin{align} \label{eq:blockg1}
g= \left(
 \begin{array}{@{\,}ccc|c|c|c|ccc|c@{\,}}
      & & & & & & & & & \\
      & g_1 & & & & & & & & \\ 
     & & & & & & & & & \\ \hline
     & & & 1 & & & & & & \\ \hline
     & & & & \ddots &  &  & & & \\ \hline
     & & & & & 1 & & & & \\ \hline
     & & & & & & & & & \\
     & & & & & & & g_2 & & \\ 
      & & & & & & & & & \\ \hline
     & & & & & & & & & \ddots 
 \end{array}
 \right).
\end{align}
Note that $|K_1| \geq \bb$ since $K_1 \supset [\bb]$.
We write $g_1 \in C^{(K_1)}$ as
\begin{align*}
g_1=\left(
 \begin{array}{@{\,}ccccccc@{\,}}
     x_1 & x_2 & \cdots & x_\bb & \cdots & x_{|K_1|} & 1\\
     x_2 & & \iddots &  & \iddots & 1 & \\ 
     \vdots& \iddots & & \iddots & \iddots & &  \\
     x_\bb &  & \iddots & \iddots & & & \\ 
     \vdots & \iddots & \iddots & & & & \\
     x_{|K_1|} & 1 & & & & & \\ 
     1& & & & & & 
 \end{array}
 \right). 
\end{align*}
By \eqref{eq:Omegab}, \eqref{eq:defining equation Omegaj}, and \eqref{eq:blockg1}, the equations that define $\Omega_{[\bb]}$ in $X_K^\circ \cong \C^{|K|}$ are given by equations that the leading principal minor of order $j$ of the matrix $g_1$ is zero for all $1 \leq j \leq \bb$.
This implies that $x_1=x_2=\cdots=x_\bb=0$. 
Indeed, the case when $j=1$ yields $x_1=0$. 
Then the leading principal minor of order $2$ of the matrix $g_1$ is $x_1x_3-x_2^2=-x_2^2$, so the case when $j=2$ implies $x_2=0$.
Proceeding with this argument, we can inductively obtain that $x_1=x_2=\cdots=x_\bb=0$.  
Therefore, we conclude that $\Omega_{[\bb]} \cap X_K^\circ \cong \C^{|K|-\bb}$, as desired.
\end{proof}

\begin{proposition} \label{proposition:condition(2)OmegaJ}
For a positive integer $1 \leq \bb \leq n-1$, the Poincar\'e polynomial of $\Omega_{[\bb]}$ is given by
\begin{align*}
\Poin(\Omega_{[\bb]},\q)=(1+\q^2)^{n-1-b}.
\end{align*}
\end{proposition}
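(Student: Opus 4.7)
The plan is to compute the Poincaré polynomial by exhibiting $\Omega_{[b]}$ as an affine paving and applying Proposition~\ref{proposition:paving}. All the building blocks are in place: Lemma~\ref{lemma:decomp_OmegaJ} gives the cell decomposition $\Omega_{[b]} = \coprod_{K \supset [b]} (\Omega_{[b]} \cap X_K^\circ)$, and Lemma~\ref{lemma:Omegab} identifies each piece with $\C^{|K|-b}$. What remains is to organize these cells into a filtration by closed subsets, and then count.

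First, I would check that each $X_K = X_{w_K} \cap \Pet_n$ is closed in $\Pet_n$ (immediate, since $X_{w_K}$ is closed in $G/B$) and that it decomposes as $X_K = \coprod_{L \subset K} X_L^\circ$; this follows from \eqref{eq:decomp XI} applied with $I=K$. Next, for each $d \geq 0$, set
\begin{align*}
Z_d \coloneqq \bigcup_{\substack{K \supset [b] \\ |K| \leq d+b}} \bigl(\Omega_{[b]} \cap X_K\bigr),
\end{align*}
a closed subset of $\Omega_{[b]}$ since it is a finite union of intersections of closed subsets. This yields a filtration $\Omega_{[b]} = Z_{n-1-b} \supset \cdots \supset Z_0 \supset Z_{-1} = \emptyset$.

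The key verification is that
\begin{align*}
Z_d \setminus Z_{d-1} = \coprod_{\substack{K \supset [b] \\ |K| = d+b}} \bigl(\Omega_{[b]} \cap X_K^\circ\bigr).
\end{align*}
For this, take a point $x$ of $Z_d$. Using the decomposition $X_K = \coprod_{L \subset K} X_L^\circ$, the point $x$ lies in $\Omega_{[b]} \cap X_M^\circ$ for a unique $M \subset [n-1]$; by Lemma~\ref{lemma:decomp_OmegaJ} this forces $M \supset [b]$. If $|M| \leq d-1+b$, then $x \in \Omega_{[b]} \cap X_M \subset Z_{d-1}$. Conversely, if $|M| = d+b$, then $x$ lies in the asserted disjoint union and, since $X_M^\circ \cap X_L = \emptyset$ whenever $M \not\subset L$, the point $x$ does not lie in $Z_{d-1}$. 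By Lemma~\ref{lemma:Omegab}, each summand is isomorphic to $\C^d$.

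Finally, count: the subsets $K \subset [n-1]$ with $K \supset [b]$ and $|K| = d+b$ correspond to $d$-element subsets of $[n-1] \setminus [b]$, so there are $\binom{n-1-b}{d}$ of them. By Proposition~\ref{proposition:paving}, the classes $[\overline{\Omega_{[b]} \cap X_K^\circ}]$ give a basis for $H_*(\Omega_{[b]})$, so the odd-degree homology vanishes and $\dim_{\Q} H_{2d}(\Omega_{[b]}) = \binom{n-1-b}{d}$. The universal coefficient theorem then gives
\begin{align*}
\Poin(\Omega_{[b]}, q) = \sum_{d=0}^{n-1-b} \binom{n-1-b}{d} q^{2d} = (1+q^2)^{n-1-b},
\end{align*}
as desired. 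The only mildly delicate step is the disjoint union identity $Z_d \setminus Z_{d-1} = \coprod (\Omega_{[b]} \cap X_K^\circ)$, which however follows directly from the cell decomposition of $X_K$ combined with Lemma~\ref{lemma:decomp_OmegaJ}; no new geometric input is required.
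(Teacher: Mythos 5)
Your proposal is correct and follows essentially the same route as the paper: the paper's proof uses exactly the filtration $Z_k=\bigcup_{K\supset[\bb],\,|K|\le k}(\Omega_{[\bb]}\cap X_K)$ (your $Z_d$ with $k=d+\bb$), identifies $Z_k\setminus Z_{k-1}$ with the disjoint union of the cells $\Omega_{[\bb]}\cap X_K^\circ\cong\C^{|K|-\bb}$ via \eqref{eq:decomp XI} and Lemmas~\ref{lemma:decomp_OmegaJ} and \ref{lemma:Omegab}, and concludes by Proposition~\ref{proposition:paving}. The only difference is that you spell out the verification of the disjoint-union identity for $Z_d\setminus Z_{d-1}$, which the paper leaves implicit.
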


\begin{proof}
The proof is similar to that of Proposition~\ref{proposition:condition(2)XI}. 
Set $Z_k=\displaystyle\bigcup_{K \supset [\bb] \atop |K| \leq k} (\Omega_{[\bb]} \cap X_K)$ for each $\bb \leq k \leq n-1$. 
By \eqref{eq:decomp XI} and Lemma~\ref{lemma:decomp_OmegaJ} we see that $\Omega_{[\bb]}=Z_{n-1}$.
For the filtration $\Omega_{[\bb]}=Z_{n-1} \supset Z_{n-2} \supset \dots \supset Z_{\bb} \supset Z_{\bb-1}=\emptyset$, each $Z_k \setminus Z_{k-1}$ is a disjoint union of $\Omega_{[\bb]} \cap X_{K}^\circ$ with $|K|=k$ and $K \supset [\bb]$ by \eqref{eq:decomp XI}. 
It follows from Proposition~\ref{proposition:paving} and Lemma~\ref{lemma:Omegab} that 
\begin{align*}
\Poin(\Omega_{[\bb]},\q)=(1+\q^2)^{n-1-b}.
\end{align*}
\end{proof}

We now give an explicit presentation of the cohomology ring of $\Omega_{[\bb]}$.

\begin{theorem} \label{theorem:mainOmegaJ}
Let $\bb$ be a positive integer with $1 \leq \bb \leq n-1$ and consider the intersection $\Omega_{[\bb]}$ defined in \eqref{eq:intersectionPeterson_oppositeSchubert}. 
Then, the restriction map $H^*(\Pet_n) \rightarrow H^*(\Omega_{[\bb]})$ is surjective. 
Furthermore, there is an isomorphism of graded $\Q$-algebras
\begin{align*} 
H^*(\Omega_{[\bb]}) \cong \RR/(\alpha_k \varpi_k \mid \bb+1 \leq k \leq n-1) +(\alpha_j \mid 1 \leq j \leq \bb),
\end{align*}
which sends $\alpha$ to $c_1(L_\alpha^*)$. 
\end{theorem}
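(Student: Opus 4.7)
The plan is to recognize $\Omega_{[\bb]}$ as an instance of $Y_I^J$ in Theorem~\ref{theorem:Cohomology1} with $I=\Delta=\{\alpha_1,\ldots,\alpha_{n-1}\}$ and $J=\{\alpha_1,\ldots,\alpha_\bb\}$. Once this identification is checked, the theorem is essentially a corollary of Theorem~\ref{theorem:Cohomology1}, since with these choices the first family of generators $\alpha_k\varpi_k$ in $\L_I^J$ runs over $\bb+1\leq k\leq n-1$, the second family $\alpha_j$ runs over $1\leq j\leq \bb$, and the third family $\varpi_i$ (for $\alpha_i\in\Delta\setminus I$) is empty. Thus the ideal $\L_I^J$ becomes precisely $(\alpha_k\varpi_k\mid \bb+1\leq k\leq n-1)+(\alpha_j\mid 1\leq j\leq \bb)$, matching the statement of Theorem~\ref{theorem:mainOmegaJ}.

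To apply Theorem~\ref{theorem:Cohomology1}, the plan is to verify the two hypotheses in order. First, Lemma~\ref{lemma:condition(1)OmegaJ} applied to $J=[\bb]$ gives both that the $S$-action on $\Pet_n$ preserves $\Omega_{[\bb]}$ and that the $S$-fixed point set is
\begin{align*}
(\Omega_{[\bb]})^S=\{w_K\in\perm_n\mid K\subset[n-1],\ [\bb]\subset K\},
\end{align*}
which is exactly condition~(1) of Theorem~\ref{theorem:Cohomology1} in this case since $I=\Delta$ imposes no upper constraint on $K$. Second, Proposition~\ref{proposition:condition(2)OmegaJ} gives
\begin{align*}
\Poin(\Omega_{[\bb]},\q)=(1+\q^2)^{n-1-\bb}=(1+\q^2)^{|I|-|J|},
\end{align*}
which is condition~(2). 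Both hypotheses are now in place.

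Applying Theorem~\ref{theorem:Cohomology1} immediately yields the surjectivity of the composition
\begin{align*}
\varphi_I^J\colon \RR\xrightarrow{\varphi} H^*(\Pet_n)\to H^*(\Omega_{[\bb]}),
\end{align*}
and hence the surjectivity of the restriction map $H^*(\Pet_n)\to H^*(\Omega_{[\bb]})$, along with the identification of $\ker(\varphi_I^J)$ with the ideal described above. This produces the claimed isomorphism of graded $\Q$-algebras sending $\alpha$ to $c_1(L_\alpha^*)$. Since all the genuinely nontrivial work---the regular sequence argument and the localization comparison---is absorbed into the proof of Theorem~\ref{theorem:Cohomology1}, there is no real obstacle here; the only substantive ingredients specific to this theorem are the type $A$ computations that go into Lemma~\ref{lemma:condition(1)OmegaJ} and Proposition~\ref{proposition:condition(2)OmegaJ}, both of which are already established in the preceding section.
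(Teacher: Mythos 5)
Your proposal is correct and follows exactly the same route as the paper: the paper's proof of Theorem~\ref{theorem:mainOmegaJ} likewise just observes that Lemma~\ref{lemma:condition(1)OmegaJ} and Proposition~\ref{proposition:condition(2)OmegaJ} are precisely conditions~(1) and~(2) of Theorem~\ref{theorem:Cohomology1} for $I=[n-1]$ and $J=[\bb]$, and then invokes that theorem. Your additional bookkeeping of how the three families of generators of $\L_I^J$ specialize is accurate and consistent with the stated presentation.
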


\begin{proof}
Lemma~\ref{lemma:condition(1)OmegaJ} and Proposition~\ref{proposition:condition(2)OmegaJ} are exactly the conditions~(1) and (2) in Theorem~\ref{theorem:Cohomology1} when $I=[n-1]$ and $J=[\bb]$. 
Therefore, the statement follows from Theorem~\ref{theorem:Cohomology1}.
\end{proof}

\begin{remark}
Theorem~\ref{theorem:psiIJiso} leads us to an explicit presentation of the $S$-equivariant cohomology ring $H^*_S(\Omega_{[\bb]})$.
Here, $t \in H^2_S(\pt)$ is defined as follows. 
If $n=2m+1$ is odd, then we define a character $\rho \in \Hom(S,\C^*)$ by $\rho: \diag(g^{-m},...,g^{-1},1,g,...,g^m) \mapsto g^{-1}$ and $t=c_1^S(L_\rho)$.
If $n=2m$ is even, then we define $\rho: \diag(g^{-(2m-1)},...,g^{-3},g^{-1},g,g^3...,g^{2m-1}) \mapsto g^{-2}$ and $t=c_1^S(L_\rho)$. 
Then, one can verify that the restriction map $H^*_T(\pt) \mapsto H^*_S(\pt)$ sends $\alpha_i^T$ to $t$ for all $i \in [n]$, as seen in \eqref{eq:alpha to t}.
\end{remark}

\begin{corollary} \label{corollary:Omega}
For a positive integer $1 \leq \bb \leq n-1$, $\Omega_{[\bb]}$ is irreducible.
\end{corollary}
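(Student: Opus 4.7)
The plan is to combine the equidimensionality of $\Omega_{[\bb]}$ with the affine paving constructed in the proof of Proposition~\ref{proposition:condition(2)OmegaJ}. First, Proposition~\ref{proposition:propertyOmegaJ} tells us that $\Omega_{[\bb]}$ is equidimensional of complex dimension $n-1-\bb$. Hence to prove irreducibility it is enough to show that $\Omega_{[\bb]}$ has a unique irreducible component.

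Next I would exploit the paving. From Lemma~\ref{lemma:decomp_OmegaJ} and Lemma~\ref{lemma:Omegab} we have
\begin{align*}
\Omega_{[\bb]} = \coprod_{K \supset [\bb]} \bigl( \Omega_{[\bb]} \cap X_K^\circ \bigr), \qquad \Omega_{[\bb]} \cap X_K^\circ \cong \C^{|K|-\bb}.
\end{align*}
In particular, each piece $\Omega_{[\bb]} \cap X_K^\circ$ is irreducible, its closure is an irreducible closed subvariety of $\Omega_{[\bb]}$, and $\Omega_{[\bb]}$ is the (finite) union of these closures. The pieces of maximal dimension $n-1-\bb$ correspond to indices $K$ with $|K| = n-1$, i.e., $K=[n-1]$, so there is exactly one top-dimensional cell, namely $\Omega_{[\bb]} \cap X_{[n-1]}^\circ$.

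To finish, I would argue as follows. Any irreducible component of $\Omega_{[\bb]}$ must be the closure of one of the finitely many cells $\Omega_{[\bb]} \cap X_K^\circ$ (since the components are maximal among such closures). By the equidimensionality from Proposition~\ref{proposition:propertyOmegaJ}, every irreducible component has dimension $n-1-\bb$, so it must be the closure of a top-dimensional cell. Since the only top-dimensional cell is $\Omega_{[\bb]} \cap X_{[n-1]}^\circ$, we obtain a unique irreducible component $\overline{\Omega_{[\bb]} \cap X_{[n-1]}^\circ} = \Omega_{[\bb]}$.

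The main thing to be careful about is the claim that each irreducible component of $\Omega_{[\bb]}$ is the closure of one of the cells in the paving; this is where equidimensionality (Proposition~\ref{proposition:propertyOmegaJ}) is essential, since otherwise a lower-dimensional cell could in principle contribute an extra component not dominated by the closure of a top cell. Once this is secured, the rest is a short counting argument. Alternatively, one could read off the same conclusion from the top Betti number: the Poincar\'e polynomial $(1+q^2)^{n-1-\bb}$ computed in Proposition~\ref{proposition:condition(2)OmegaJ} has leading coefficient $1$, and for a compact equidimensional complex variety the rank of top homology equals the number of irreducible components.
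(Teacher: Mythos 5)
Your argument is correct, but your main line of reasoning differs from the paper's. The paper's proof is the one you sketch only in your final sentence: it combines equidimensionality (Proposition~\ref{proposition:propertyOmegaJ}) with the computation $H^{2(n-1-\bb)}(\Omega_{[\bb]}) \cong \Q$ from Proposition~\ref{proposition:condition(2)OmegaJ}, citing the fact that the top-degree cohomology of a compact equidimensional complex variety has one generator per irreducible component. Your primary argument instead works directly with the affine paving: since $\Omega_{[\bb]}$ is a finite disjoint union of irreducible locally closed cells, every irreducible component is the closure of some cell; equidimensionality forces each component to be the closure of a cell of dimension $n-1-\bb$, and the only such cell is $\Omega_{[\bb]} \cap X_{[n-1]}^\circ$ (the unique $K$ with $|K|-\bb = n-1-\bb$ is $K=[n-1]$), so there is a single component. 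This is sound --- the key step, that any irreducible component $C$ of a finite union $\bigcup_K \overline{\Omega_{[\bb]} \cap X_K^\circ}$ of irreducible closed sets equals one of the $\overline{\Omega_{[\bb]} \cap X_K^\circ}$, is standard, and you correctly flag equidimensionality as the ingredient ruling out extra components from lower-dimensional cells. What your route buys is that it is more elementary and self-contained: it uses only the cell structure from Lemmas~\ref{lemma:decomp_OmegaJ} and \ref{lemma:Omegab} (the same input underlying the Poincar\'e polynomial computation) and avoids the appeal to the topological fact about top homology classes of components. What the paper's route buys is brevity, since the Poincar\'e polynomial has already been recorded, and it applies verbatim in any situation where only the Betti numbers (rather than an explicit paving) are known.
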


\begin{proof}
By Proposition~\ref{proposition:propertyOmegaJ}, $\Omega_{[\bb]}$ is equidimensional of dimension $n-1-\bb$, so the top degree cohomology $H^{2(n-1-\bb)}(\Omega_{[\bb]})$ has a generator for each irreducible components of $\Omega_{[\bb]}$ (cf. \cite[\S~B.3, Lemma~4]{Ful97}).
However, since $H^{2(n-1-\bb)}(\Omega_{[\bb]}) \cong \Q$ by Proposition~\ref{proposition:condition(2)OmegaJ}, we conclude that $\Omega_{[\bb]}$ is irreducible. 
\end{proof}

We expect that for arbitrary $J \subset [n-1]$, $\Omega_J$ satisfies the conditions~(1) and (2) in Theorem~\ref{theorem:Cohomology1} when $I=[n-1]$, but this is not true in general.
The following gives an example which does not satisfy the condition (2) in Theorem~\ref{theorem:Cohomology1}.

\begin{example}[non-irreducible case] \label{ex:non-irreducible case} 
Let $n=4$ and we take $J=\{1,3\}$.
By Lemma~\ref{lemma:decomp_OmegaJ} we have 
\begin{align*}
\Omega_{\{1,3\}} = (\Omega_{\{1,3\}} \cap X_{\{1,3\}}^\circ) \coprod (\Omega_{\{1,3\}} \cap X_{\{1,2,3\}}^\circ).
\end{align*}
Recall that $X_{\{1,3\}}^\circ$ and $X_{\{1,2,3\}}^\circ$ are described as follows:
\begin{align*}
X_{\{1,3\}}^\circ &= \left\{ \left(
 \begin{array}{@{\,}cccc@{\,}}
x_1 & 1 & 0 & 0 \\
1 & 0 & 0 & 0 \\
0 & 0 & x_2 & 1 \\
0 & 0 & 1 & 0 
 \end{array}
 \right) \middle| \ x_1, x_2 \in \C \right\}, \\
X_{\{1,2,3\}}^\circ &= \left\{ \left(
 \begin{array}{@{\,}cccc@{\,}}
y_1 & y_2 & y_3 & 1 \\
y_2 & y_3 & 1 & 0 \\
y_3 & 1 & 0 & 0 \\
1 & 0 & 0 & 0 
 \end{array}
 \right) \middle| \ y_1, y_2, y_3 \in \C \right\}. 
\end{align*}
As explained in the proof of Lemma~\ref{lemma:Omegab}, the defining equations of $\Omega_{\{1,3\}}$ in $X_K^\circ \ (K=\{1,3\}, \{1,2,3\})$ is given by equations that the leading principal minors of orders $1$ and $3$ are zero.
Hence, we obtain 
\begin{align*}
\Omega_{\{1,3\}} \cap X_{\{1,3\}}^\circ &= \left\{ \left(
 \begin{array}{@{\,}cccc@{\,}}
0 & 1 & 0 & 0 \\
1 & 0 & 0 & 0 \\
0 & 0 & 0 & 1 \\
0 & 0 & 1 & 0 
 \end{array}
 \right) \right\}, \\
\Omega_{\{1,3\}} \cap X_{\{1,2,3\}}^\circ &= \left\{ \left(
 \begin{array}{@{\,}cccc@{\,}}
0 & y_2 & y_3 & 1 \\
y_2 & y_3 & 1 & 0 \\
y_3 & 1 & 0 & 0 \\
1 & 0 & 0 & 0 
 \end{array}
 \right) \middle| \ (2y_2-y_3^2)y_3=0 \right\}. 
\end{align*}
Note that $\Omega_{\{1,3\}} \cap X_{\{1,2,3\}}^\circ$ has two irreducible components.
Since $X_{\{1,2,3\}}^\circ$ is an open subset of $\Pet_4$, we see that $\Omega_{\{1,3\}}$ is \emph{not} irreducible.
As discussed in the proof of Corollary~\ref{corollary:Omega}, the top degree cohomology $H^2(\Omega_{\{1,3\}})$ is a two dimensional vactor space, which implies that $\Omega_{\{1,3\}}$ does not satisfy the condition (2) in Theorem~\ref{theorem:Cohomology1} when $I=[n-1]$.

We further analyze the topology of $\Omega_{\{1,3\}}$.
Let $Z_1$ be the irreducible component of $\Omega_{\{1,3\}} \cap X_{\{1,2,3\}}^\circ$ defined by $y_3=0$ and $Z_2$ the other irreducible component of $\Omega_{\{1,3\}} \cap X_{\{1,2,3\}}^\circ$ defined by $2y_2-y_3^2=0$.
Note that $Z_1$ is isomorphic to $\C$ with the coordinate $y_2$ and $Z_2$ is also isomorphic to $\C$ with the coordinate $y_3$. Their intersection $Z_1 \cap Z_2$ is equal to the point $\{w_{\{1,2,3\}}\}$.
If $y_2 \neq 0$ in $Z_1$, then a flag in $Z_1$ can be expressed as 
\begin{align*}
 \left(
 \begin{array}{@{\,}cccc@{\,}}
0 & y_2 & 0 & 1 \\
y_2 & 0 & 1 & 0 \\
0 & 1 & 0 & 0 \\
1 & 0 & 0 & 0 
 \end{array}
 \right)
 \equiv
  \left(
 \begin{array}{@{\,}cccc@{\,}}
0 & 1 & 0 & 0 \\
1 & 0 & 0 & 0 \\
0 & 1/y_2 & 0 & 1 \\
1/y_2 & 0 & 1 & 0 
 \end{array}
 \right) \ \ \ \textrm{in} \ \Fl(\C^4) \cong SL_4(\C)/B.
\end{align*}
Thus, the point at inﬁnity on $Z_1$ is $\{w_{\{1,3\}} \}$, which is exactly $\Omega_{\{1,3\}} \cap X_{\{1,3\}}^\circ$. 
Similarly, if $y_3 \neq 0$ in $Z_2$, then the point at inﬁnity on $Z_2$ is $\{w_{\{1,3\}} \}=\Omega_{\{1,3\}} \cap X_{\{1,3\}}^\circ$ by an elementary computation as follows:  
\begin{align*}
 \left(
 \begin{array}{@{\,}cccc@{\,}}
0 & y_3^2/2 & y_3 & 1 \\
y_3^2/2 & y_3 & 1 & 0 \\
y_3 & 1 & 0 & 0 \\
1 & 0 & 0 & 0 
 \end{array}
 \right)
 \equiv
  \left(
 \begin{array}{@{\,}cccc@{\,}}
0 & 1 & 0 & 0 \\
1 & 2/y_3 & 0 & 0 \\
2/y_3 & 2/y_3^2 & 0 & 1 \\
2/y_3^2 & 0 & 1 & 2/y_3 
 \end{array}
 \right) \xrightarrow[y_3 \rightarrow \infty]{} \left(
 \begin{array}{@{\,}cccc@{\,}}
0 & 1 & 0 & 0 \\
1 & 0 & 0 & 0 \\
0 & 0 & 0 & 1 \\
0 & 0 & 1 & 0 
 \end{array}
 \right) \ \ \ \textrm{in} \ \Fl(\C^4).
\end{align*}
Hence, $\Omega_{\{1,3\}}$ in $\Pet_4$ is homeomorphic to a topological space connecting two spheres at each of the south poles and each of the north poles (see Figure~\ref{picture:Topological picture for Omega1,3}).

\begin{figure}[h]
\begin{center}
\begin{picture}(200,110)
\put(100,95){\circle*{5}}
\put(100,15){\circle*{5}}
\qbezier(100,15)(80,55)(100,95)
\qbezier(100,15)(60,25)(60,55)
\qbezier(60,55)(60,85)(100,95)

\qbezier(100,15)(120,55)(100,95)
\qbezier(100,15)(140,25)(140,55)
\qbezier(140,55)(140,85)(100,95)

\qbezier(60,55)(75,45)(90,55)
\qbezier(60,55)(62,56)(64,57)
\qbezier(66,58)(68,59)(70,59)
\qbezier(73,59)(75,60)(77,59)
\qbezier(80,59)(82,59)(84,58)
\qbezier(86,57)(88,56)(90,55)

\qbezier(140,55)(125,45)(110,55)
\qbezier(140,55)(138,56)(136,57)
\qbezier(134,58)(132,59)(130,59)
\qbezier(127,59)(125,60)(123,59)
\qbezier(120,59)(118,59)(116,58)
\qbezier(114,57)(112,56)(110,55)

\put(95,0){$w_{\{1,2,3\}} \ (y_2=y_3=0, \ Z_1 \cap Z_2)$}
\put(95,105){$w_{\{1,3\}} \ (y_2\rightarrow\infty \ \textrm{in} \ Z_1, \ y_3\rightarrow\infty \ \textrm{in} \ Z_2)$}

\put(40,50){$Z_1$}
\put(150,50){$Z_2$}
\end{picture}
\end{center}
\caption{Topological picture for $\Omega_{\{1,3\}}$ in $\Pet_4$.}
\label{picture:Topological picture for Omega1,3}
\end{figure}
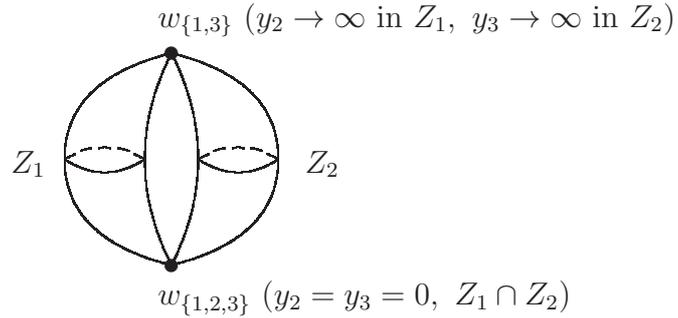

In particular, we see that $\Poin(\Omega_{\{1,3\}},\q)=1+\q+2\q^2$.
\end{example}

We see that $\Omega_{\{1,3\}}$ in $\Pet_4$ has two connected components in Example~\ref{ex:non-irreducible case}, which implies that $\Omega_J$ does not satisfy the condition (2) in Theorem~\ref{theorem:Cohomology1} when $I=[n-1]$, 
However, an irreducible component of $\Omega_{\{1,3\}}$ satisfies the desired conditions~(1) and (2) in Theorem~\ref{theorem:Cohomology1}. 
Therefore, we can naturally ask the following questions.

\begin{question} \label{question:OmegaJ} 
\begin{enumerate}
\item[(i)] Which of $\Omega_J$ is irreducible? (Is it true that $\Omega_J$ is irreducible if and only if $J$ is connected in the type $A_{n-1}$ Dynkin diagram?)
\item[(ii)] Is there an irreducible component $\tilde\Omega_J$ such that $\tilde\Omega_J$ satisfies the conditions~(1) and (2) in Theorem~\ref{theorem:Cohomology1} when $I=[n-1]$?
\end{enumerate}
\end{question}

\bigskip 

\section{Intersections of Peterson variety and Richardson varieties in type $A$} \label{section:Intersections of Peterson variety and Richardson varieties}

In this section we generalize results in Section~\ref{section:Intersections of Peterson variety and opposite Schubert varieties} to the intersections of Peterson veriety and some Richardson varieties in type $A_{n-1}$. 

We first recall \eqref{eq:intersectionXIOmegaJ}. For any two subsets $I,J \subset [n-1]$ with $I \supset J$, we define
\begin{align} \label{eq:XIJ}
X_I^J \coloneqq X_I \cap \Omega_J.
\end{align}
In other words, $X_I^J$ is the intersection between the Peterson variety $\Pet_n$ and the Richardson variety $X_{w_I} \cap \Omega_{w_J}$. Note that $X_I^J=X_I$ whenever $J=\emptyset$ and  $X_I^J=\Omega_J$ whenever $I=[n-1]$.

By Lemmas~\ref{lemma:condition(1)XI} and \ref{lemma:condition(1)OmegaJ}, one can see the following lemma.

\begin{lemma} \label{lemma:condition(1)XIJ}
Let $I$ and $J$ be two subsets of $[n-1]$ such that $I \supset J$.
The $S$-action on $\Pet_n$ preserves $X_I^J$ and the fixed point set is given by
\begin{align*}
(X_I^J)^S =\{w_K \mid J \subset K \subset I \}.
\end{align*}
\end{lemma}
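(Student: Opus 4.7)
The plan is to deduce this lemma as an immediate consequence of the corresponding statements for the two factors $X_I$ and $\Omega_J$, together with the behaviour of the $S$-fixed point set under intersection. Recall from \eqref{eq:XIJ} that $X_I^J = X_I \cap \Omega_J$, where $X_I = X_{w_I} \cap \Pet_n$ and $\Omega_J = \Omega_{w_J} \cap \Pet_n$ are subvarieties of $\Pet_n$.

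First, I would argue that the $S$-action on $\Pet_n$ preserves $X_I^J$. By Lemma~\ref{lemma:condition(1)XI} the $S$-action on $\Pet_n$ preserves $X_I$, and by Lemma~\ref{lemma:condition(1)OmegaJ} it also preserves $\Omega_J$. Since the intersection of two $S$-invariant subsets of $\Pet_n$ is $S$-invariant, $X_I^J$ is preserved by $S$.

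Next I would compute the fixed points. Because taking $S$-fixed points commutes with intersection, we have
\begin{align*}
(X_I^J)^S = (X_I \cap \Omega_J)^S = (X_I)^S \cap (\Omega_J)^S.
\end{align*}
Applying Lemma~\ref{lemma:condition(1)XI} gives $(X_I)^S = \{w_K \mid K \subset I\}$, and applying Lemma~\ref{lemma:condition(1)OmegaJ} gives $(\Omega_J)^S = \{w_K \mid K \supset J\}$. Intersecting these two sets yields $(X_I^J)^S = \{w_K \mid J \subset K \subset I\}$, which is the desired description.

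There is no genuine obstacle here: the statement is a direct combination of the two previously established lemmas, once one observes that both $S$-invariance and the identification of $S$-fixed points behave well under intersection of subvarieties. The only mild point to mention is that we are using $(A \cap B)^S = A^S \cap B^S$, which is immediate since a point is fixed by $S$ in $A \cap B$ if and only if it is fixed by $S$ in each of $A$ and $B$.
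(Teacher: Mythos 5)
Your argument is correct and is essentially the paper's own: the lemma is stated there as an immediate consequence of Lemma~\ref{lemma:condition(1)XI} and Lemma~\ref{lemma:condition(1)OmegaJ}, exactly as you deduce it via $S$-invariance of intersections and $(A \cap B)^S = A^S \cap B^S$. You have simply written out the details the paper leaves implicit.
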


Lemma~\ref{lemma:decomp_OmegaJ} and \eqref{eq:decomp XI} yields the following decomposition
\begin{align} \label{eq:decomp_XIJ}
X_I^J=\displaystyle\coprod_{K \supset J} \big( (\Omega_J \cap X_K^\circ) \cap X_I \big)=\displaystyle\coprod_{J \subset K \subset I} (\Omega_J \cap X_K^\circ).
\end{align}

For positive integers $\a,\bb$ with $\a \leq \bb$, we denote by $[\a,\bb]$ the set $\{\a,\a+1,\a+2,\ldots,\bb\}$. 
Here and below, we consider the case when $I=[\a,n-1]$ and $J=[\a,\bb]$ for some $\a \leq \bb$. 
Note that $X_{[\a,n-1]}^{[\a,\bb]}=\Omega_{[\bb]}$ whenever $\a=1$. 

\begin{lemma} \label{lemma:Xab}
Let $\a$ and $\bb$ be positive integers with $\a \leq \bb \leq n-1$. 
For a subset $K$ of $[n-1]$ such that $[\a,\bb] \subset K \subset [\a,n-1]$, one has
\begin{align*}
\Omega_{[\a,\bb]} \cap X_K^\circ \cong \C^{|K|-(\bb-\a+1)}.
\end{align*}
\end{lemma}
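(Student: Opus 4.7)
The plan is to mimic the proof of Lemma~\ref{lemma:Omegab}, keeping careful track of the shift caused by the fact that $K$ now avoids the indices $1,\ldots,\a-1$ rather than starting at $1$.

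First I would pin down the block structure of $X_K^\circ$ under the hypothesis $[\a,\bb] \subset K \subset [\a,n-1]$. Since $1,2,\ldots,\a-1 \notin K$, they appear as isolated vertices in the Dynkin decomposition \eqref{eq:connected components}, so the first $\a-1$ diagonal entries of any $g \in X_K^\circ$ equal $1$. The first genuine block $g_1 \in C^{(K_1)}$ then occupies rows/columns indexed by $K_1 \cup \{\max K_1 + 1\}$, where $K_1$ is the connected component of $K$ containing $\a$. Since $[\a,\bb]$ is consecutive and contained in $K$, it must lie entirely inside $K_1$, so $|K_1| \geq \bb - \a + 1$.

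Next I would invoke the same two inputs used for Lemma~\ref{lemma:Omegab}: the identity $\Omega_{[\a,\bb]} = \bigcap_{j=\a}^{\bb} \Omega_{\{j\}}$ from \cite[Proposition~3.10]{AHKZ}, together with \cite[Proposition~3.12]{AHKZ} which cuts out $\Omega_{\{j\}}$ inside $X_K^\circ$ by the vanishing of the leading principal $j \times j$ minor of $g$. Because the top-left $(\a-1)\times(\a-1)$ corner of $g$ is the identity, Laplace expansion gives
\begin{equation*}
\det(g[1{:}j,1{:}j]) = \det(g_1[1{:}j-\a+1,\,1{:}j-\a+1]) \qquad (\a \le j \le \bb).
\end{equation*}
So the defining equations of $\Omega_{[\a,\bb]}$ in $X_K^\circ$ reduce to the vanishing of the leading principal minors of orders $1,2,\ldots,\bb-\a+1$ of $g_1$.

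Finally, I would parametrize $g_1 \in C^{(K_1)}$ by antidiagonal variables $x_1,\ldots,x_{|K_1|}$ in the same pattern used in Lemma~\ref{lemma:Omegab}, and run the identical induction: the order-$1$ minor gives $x_1 = 0$; once $x_1 = \cdots = x_{k-1} = 0$, the order-$k$ minor collapses (up to sign) to $\pm x_k^2$, forcing $x_k = 0$. Applying this for $k = 1,\ldots,\bb-\a+1$ kills exactly $\bb - \a + 1$ of the $|K|$ parameters of $X_K^\circ$, leaving $|K| - (\bb-\a+1)$ free complex parameters and hence the asserted isomorphism $\Omega_{[\a,\bb]} \cap X_K^\circ \cong \C^{|K|-(\bb-\a+1)}$. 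The only nontrivial step is step three—verifying that the leading principal minors of $g$ really do reduce cleanly to minors of $g_1$—but this is immediate from the block structure, so no substantive obstacle is expected.
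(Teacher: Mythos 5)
Your proposal is correct and follows essentially the same route as the paper: reduce to the block structure of $X_K^\circ$ (identity in the top-left $(\a-1)\times(\a-1)$ corner, then $g_1\in C^{(K_1)}$), use $\Omega_{[\a,\bb]}=\bigcap_{j=\a}^{\bb}\Omega_{\{j\}}$ and the leading-principal-minor description to shift the minor conditions onto $g_1$, and then run the same induction as in Lemma~\ref{lemma:Omegab}. The only quibble is that after $x_1=\cdots=x_{k-1}=0$ the order-$k$ minor is $\pm x_k^k$ rather than $\pm x_k^2$, which of course still forces $x_k=0$ over $\C$.
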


\begin{proof}
We prove this by a similar argument of the proof of Lemma~\ref{lemma:Omegab}. 
Since $[\a,\bb] \subset K \subset [\a,n-1]$, $K$ satisfies the following condition 
\begin{align*}
1, \ldots, \a-1 \not\in K \ \textrm{and} \ K \ \textrm{contains} \ [\a,\bb]. 
\end{align*}
Recalling that we may regard $X_K^\circ$ as the set of the block diagonal matrices in \eqref{eq:XwKcapPet}, the condition above means that the first $a-1$ diagonal entries of $g \in X_K^\circ$ are $1$, followed by $g_1 \in C^{(K_1)}$.
$$
g=
\begin{array}{rc|c|c|ccc|cl}
\hspace{-10pt}\ldelim({8}{1ex}[]&1    &           &    &           & & & &\rdelim){8}{1ex}[]  \\ \hline
							&    &  \ddots          &  &    &   & & &\\ \hline
							&        &     &  1  &           &   & &&\\ \hline
							&     &            &    &          &   &&&\\ 
							&     &            &    &           & g_1  &&&\\ 
							&     &            &    &           &   &&&\\ \hline
							&    &           &     &          &  &&\ddots& \\ 
							&\multicolumn{3}{c}{\raisebox{2ex}[1ex][0ex]{$\underbrace{\hspace{11ex}}_{{a-1}}$}}&&\multicolumn{3}{c}{\raisebox{2ex}[1ex][0ex]{}}&
\end{array}
$$
One can see that the equations that define $\Omega_{[\a,\bb]}=\cap_{j=\a}^{\bb} \Omega_{\{j\}}$ in $X_K^\circ \cong \C^{|K|}$ are given by equations that the leading principal minor of order $j$ of the matrix $g$ is zero for all $\a \leq j \leq \bb$ from \eqref{eq:defining equation Omegaj}.
In other words, the leading principal minor of order $j$ of the matrix $g_1$ is zero for all $1 \leq j \leq \bb-\a+1$.
By an argument similar to the proof of Lemma~\ref{lemma:Omegab}, we obtain $\Omega_{[\a,\bb]} \cap X_K^\circ \cong \C^{|K|-(\bb-\a+1)}$, as desired.
\end{proof}

Consider a filtration $X_{[\a,n-1]}^{[\a,\bb]}=Z_{n-\a} \supset Z_{n-\a-1} \supset \dots \supset Z_{\bb-\a+1} \supset Z_{\bb-\a}=\emptyset$ where $Z_k \coloneqq \displaystyle\bigcup_{[\a,\bb] \subset K \subset [\a,n-1] \atop |K| \leq k} (\Omega_{[\a,\bb]} \cap X_K)$ for each $\bb-\a+1 \leq k \leq n-\a$. (Note that $X_{[\a,n-1]}^{[\a,\bb]}=Z_{n-\a}$ by \eqref{eq:decomp XI} and \eqref{eq:decomp_XIJ}.)
Since each $Z_k \setminus Z_{k-1}$ is a disjoint union of $\Omega_{[\a,\bb]} \cap X_{K}^\circ$ with $|K|=k$ and $[\a,\bb] \subset K \subset [\a,n-1]$, Proposition~\ref{proposition:paving} and Lemma~\ref{lemma:Xab} yields the following proposition. 

\begin{proposition} \label{proposition:condition(2)XIJ}
For positive integers $\a \leq \bb \leq n-1$, the Poincar\'e polynomial of $X_{[\a,n-1]}^{[\a,\bb]}$ is equal to
\begin{align*}
\Poin(X_{[\a,n-1]}^{[\a,\bb]},\q)=(1+\q^2)^{n-1-b}.
\end{align*}
\end{proposition}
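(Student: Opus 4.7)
The plan is to imitate the paving-by-affines arguments already carried out in the proofs of Proposition~\ref{proposition:condition(2)XI} and Proposition~\ref{proposition:condition(2)OmegaJ}. The filtration $X_{[\a,n-1]}^{[\a,\bb]} = Z_{n-\a} \supset Z_{n-\a-1} \supset \cdots \supset Z_{\bb-\a+1} \supset Z_{\bb-\a} = \emptyset$ is already set up in the paragraph preceding the proposition, and by the decomposition \eqref{eq:decomp_XIJ} together with \eqref{eq:decomp XI}, each successive difference $Z_k \setminus Z_{k-1}$ is a disjoint union of the strata $\Omega_{[\a,\bb]} \cap X_K^\circ$ with $[\a,\bb] \subset K \subset [\a,n-1]$ and $|K| = k$.

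The first key input is Lemma~\ref{lemma:Xab}, which identifies each stratum with an affine space of complex dimension $|K| - (\bb - \a + 1)$. This is exactly the hypothesis required to invoke Proposition~\ref{proposition:paving}, which then supplies a basis of $H_*(X_{[\a,n-1]}^{[\a,\bb]})$ indexed by the closures of these strata. In particular all odd Betti numbers vanish and
\begin{align*}
\dim_\Q H_{2d}(X_{[\a,n-1]}^{[\a,\bb]}) = \#\{K : [\a,\bb] \subset K \subset [\a,n-1],\ |K| - (\bb-\a+1) = d\}.
\end{align*}

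The second step is a combinatorial count: since such a $K$ is determined by its intersection with $[\bb+1, n-1]$, and we must adjoin exactly $d$ further elements to $[\a,\bb]$, the number of such $K$ is $\binom{n-1-\bb}{d}$. Summing over $d$ and using the universal coefficient theorem yields
\begin{align*}
\Poin(X_{[\a,n-1]}^{[\a,\bb]}, \q) = \sum_{d=0}^{n-1-\bb} \binom{n-1-\bb}{d} \q^{2d} = (1+\q^2)^{n-1-\bb},
\end{align*}
as required. There is no genuine obstacle here: the decomposition and the cell dimensions have already been extracted, and the only arithmetic point is that the index $\a$ drops out of the count because the free choices all lie in the range $[\bb+1, n-1]$, whose size $n-1-\bb$ is independent of $\a$. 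This also explains why the answer matches Proposition~\ref{proposition:condition(2)OmegaJ} in the boundary case $\a = 1$.
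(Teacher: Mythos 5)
Your proposal is correct and follows essentially the same route as the paper: the paper's proof is exactly the filtration set up in the preceding paragraph combined with Proposition~\ref{proposition:paving} and Lemma~\ref{lemma:Xab}, and you have merely made explicit the binomial count $\binom{n-1-\bb}{d}$ of the subsets $K$ with $[\a,\bb]\subset K\subset[\a,n-1]$, which the paper leaves implicit. Nothing is missing.
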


Lemma~\ref{lemma:condition(1)XIJ} and Proposition~\ref{proposition:condition(2)XIJ} correspond to the conditions~(1) and (2) in Theorem~\ref{theorem:Cohomology1} when $I=[\a,n-1]$ and $J=[\a,\bb]$. 
Therefore, we obtain an explicit presentation of the cohomology ring of $X_{[\a,n-1]}^{[\a,\bb]}$.

\begin{theorem} \label{theorem:mainXIJ}
Let $\a$ and $\bb$ be positive integers such that $1 \leq \a \leq \bb \leq n-1$. 
Let $X_{[\a,n-1]}^{[\a,\bb]}$ be the intersection of the Peterson variety $\Pet_n$ and the Richardson variety defined in \eqref{eq:XIJ}. 
Then, the restriction map $H^*(\Pet_n) \rightarrow H^*(X_{[\a,n-1]}^{[\a,\bb]})$ is surjective. 
Also, there is an isomorphism of graded $\Q$-algebras
\begin{align*} 
H^*(X_{[\a,n-1]}^{[\a,\bb]}) \cong \RR/(\alpha_k \varpi_k \mid \bb+1 \leq k \leq n-1) +(\alpha_j \mid \a \leq j \leq \bb)+(\varpi_i \mid 1 \leq i \leq \a-1),
\end{align*}
which sends $\alpha$ to $c_1(L_\alpha^*)$. 
\end{theorem}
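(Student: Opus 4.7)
The proof is a direct application of Theorem~\ref{theorem:Cohomology1} to the subvariety $Y_I^J = X_{[\a,n-1]}^{[\a,\bb]}$ of $\Pet_n$ with parameters $I=[\a,n-1]$ and $J=[\a,\bb]$. The strategy is to check the two hypotheses of that theorem and then translate the description of the ideal $\L_I^J$ into the specific indexing of the current setup.

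First, I would observe that the two required conditions are precisely what has been established in the preceding preparatory results. Condition~(1), the $S$-invariance of $X_{[\a,n-1]}^{[\a,\bb]}$ together with the identification of its $S$-fixed points as $\{w_K \mid [\a,\bb]\subset K \subset [\a,n-1]\}$, is exactly Lemma~\ref{lemma:condition(1)XIJ} specialized to $I=[\a,n-1]$, $J=[\a,\bb]$. Condition~(2), which says that $\Poin(X_{[\a,n-1]}^{[\a,\bb]},q)=(1+q^2)^{|I|-|J|}$, follows from Proposition~\ref{proposition:condition(2)XIJ} once one checks the arithmetic $|I|-|J| = (n-\a)-(\bb-\a+1)=n-1-\bb$, which matches the exponent given there.

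Having verified both hypotheses, Theorem~\ref{theorem:Cohomology1} immediately yields surjectivity of $\varphi_I^J$ (and hence of the restriction $H^*(\Pet_n)\to H^*(X_{[\a,n-1]}^{[\a,\bb]})$, since it factors $\varphi_I^J$) and an isomorphism $H^*(X_{[\a,n-1]}^{[\a,\bb]})\cong \RR/\L_I^J$. The remaining bookkeeping step is to unpack the three families of generators of $\L_I^J$ under the chosen specialization: $\alpha_k\in I\setminus J$ means $\bb+1\le k\le n-1$, so the quadratic generators are $\alpha_k\varpi_k$ for $\bb+1\le k\le n-1$; $\alpha_j\in J$ means $\a\le j\le \bb$, yielding the linear generators $\alpha_j$ for $\a\le j\le \bb$; and $\alpha_i\in\Delta\setminus I$ means $1\le i\le \a-1$, yielding the linear generators $\varpi_i$ for $1\le i\le \a-1$. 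This matches the stated presentation exactly.

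There is essentially no new obstacle inside the statement itself: the genuine technical work (the $S$-fixed point computation and, more substantially, the affine paving argument giving the Poincaré polynomial via Lemmas~\ref{lemma:decomp_OmegaJ} and~\ref{lemma:Xab} together with Proposition~\ref{proposition:paving}) has already been carried out in the preceding lemmas and proposition. If one were rederiving this from scratch, the only mildly delicate point would be verifying that the defining equations of $\Omega_{[\a,\bb]}$ cut $X_K^\circ$ out as an affine space of the expected codimension $\bb-\a+1$ when $[\a,\bb]\subset K\subset [\a,n-1]$, which uses that $K_1\supset [\a,\bb]$ and hence the leading principal minors involved all live inside the first block $g_1\in C^{(K_1)}$; this is handled in Lemma~\ref{lemma:Xab}. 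The proof of the theorem is therefore one short paragraph invoking Theorem~\ref{theorem:Cohomology1}.
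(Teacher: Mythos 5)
Your proposal is correct and follows exactly the paper's own argument: the paper likewise deduces Theorem~\ref{theorem:mainXIJ} by noting that Lemma~\ref{lemma:condition(1)XIJ} and Proposition~\ref{proposition:condition(2)XIJ} verify conditions~(1) and~(2) of Theorem~\ref{theorem:Cohomology1} for $I=[\a,n-1]$, $J=[\a,\bb]$ (with $|I|-|J|=n-1-\bb$), and then reads off the generators of $\L_I^J$.
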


\begin{remark}
We can also give an explicit presentation of the $S$-equivariant cohomology ring $H^*_S(X_{[\a,n-1]}^{[\a,\bb]})$ from Theorem~\ref{theorem:psiIJiso}.
\end{remark}

We asked natural questions in Section~\ref{section:Intersections of Peterson variety and opposite Schubert varieties}. The same questions can be posed in the general setting.

\begin{question} \label{question:XIJ}
\begin{enumerate}
\item[(i)] Which of $X_I^J$ is irreducible? 
\item[(ii)] Does some irreducible component of $X_I^J$ satisfy the conditions~(1) and (2) in Theorem~\ref{theorem:Cohomology1}?
\end{enumerate}
\end{question}

\smallskip

\end{document}